\newcommand{\R}{{\mathbb R}}
\newtheorem{theo}{Theorem}[section]
\newtheorem{lemma}[theo]{Lemma}
\newtheorem{prop}[theo]{Proposition}
\newtheorem{remark}[theo]{Remark}
\newtheorem{defi}[theo]{Definition}
\newtheorem{example}[theo]{Example}
\title[Some applications of the Brascamp-Lieb inequality]{The Brascamp-Lieb inequality in Convex Geometry and in the Theory of Algorithms}
\author{K\'aroly J. B\"or\"oczky (R\'enyi Institute, Budapest)}
\begin{document}

\maketitle

\begin{abstract}
The Brascamp-Lieb inequality in harmonic analysis was proved by Brascamp and Lieb in the rank one case in 1976, and by Lieb in 1990. It says that in a certain inequality, the optimal constant can be determined by checking the inequality for centered Gaussian distributions. It was Keith M Ball's pioneering work around 1990 that led to various applications of the inequality in Convex Geometry, and even in Discrete Geometry, like Brazitikos' quantitative fractional version of the Helly Theorem. On the other hand, determining the optimal constant and possible Gaussian extremizers for the Brascamp-Lieb inequality can be formulated as a problem in terms of positive definite matrices, and this problem has intimate links to the Theory of Algorithms. 
\end{abstract}

\section{The Brascamp-Lieb-Barthe inequalities}
\label{secIntro}

For a proper linear subspace $E$ of $\R^n$ ($E\neq \R^n$ and $E\neq\{0\}$),
let $P_E$ denote the orthogonal projection into $E$.
We say that the  subspaces $E_1,\ldots,E_k$ of $\R^n$ and $p_1,\ldots,p_k>0$  form
a Geometric Brascamp-Lieb datum if they satisfy
\begin{equation}
\label{highdimcond0}
\sum_{i=1}^kp_iP_{E_i}=I_n.
\end{equation}
The name ``Geometric Brascamp-Lieb datum" coined by Bennett, Carbery, Christ, Tao \cite{BCCT08} comes from the following theorem, originating in the work of Brascamp, Lieb \cite{BrL76} and Ball \cite{Bal89,Bal91} in the rank one case
(${\rm dim}\,E_i=1$ for $i=1,\ldots,k$),
and Lieb \cite{Lie90} and Barthe \cite{Bar98} in the general case. In the rank one case, the Geometric Brascamp-Lieb datum is known by various names, like "John decomposition of the identity operator" (cf. Theorem~\ref{BrascampLiebRankOne} and Theorem~\ref{Johnmaxvol}), or tight frame, or Parseval frame in coding theory and computer science (see for example Casazza, Tran, Tremain \cite{CTT20}).

\begin{theo}[Brascamp-Lieb, Ball, Barthe]
\label{BLtheo}
For  the linear subspaces  $E_1,\ldots,E_k$ of $\R^n$ and $p_1,\ldots,p_k>0$ satisfying
\eqref{highdimcond0}, and for non-negative $f_i\in L_1(E_i)$, we have
\begin{equation}
\label{BL}
\int_{\R^n}\prod_{i=1}^kf_i(P_{E_i}x)^{p_i}\,dx
\leq \prod_{i=1}^k\left(\int_{E_i}f_i\right)^{p_i}
\end{equation}
\end{theo}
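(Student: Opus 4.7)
My plan is to follow Barthe's mass-transportation argument, which handles rank-one and higher-rank data in a uniform way and is perfectly tailored to the geometric hypothesis \eqref{highdimcond0}. After the harmless normalization $\int_{E_i}f_i=1$ and a standard smoothing that makes each $f_i$ smooth and strictly positive, I would invoke Brenier's theorem on each $E_i$ to obtain strictly convex potentials $\phi_i:E_i\to\R$ whose gradient maps $S_i=\nabla\phi_i$ push $f_i$ forward to the standard Gaussian density $\gamma_{E_i}$ on $E_i$. The Monge--Amp\`ere identity $f_i(y)=\gamma_{E_i}(S_i(y))\det DS_i(y)$, together with the consequence $\sum_i p_i\dim E_i=\operatorname{tr}\sum_i p_iP_{E_i}=n$ of \eqref{highdimcond0}, then factors the integrand on the left of \eqref{BL} as
\begin{equation*}
\prod_{i=1}^kf_i(P_{E_i}x)^{p_i}
=(2\pi)^{-n/2}\exp\!\Bigl(-\tfrac12\sum_{i=1}^kp_i|S_i(P_{E_i}x)|^2\Bigr)\prod_{i=1}^k\det DS_i(P_{E_i}x)^{p_i}.
\end{equation*}

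Next I would glue the $S_i$ into a single map $\Theta:\R^n\to\R^n$ by $\Theta(x)=\sum_{i=1}^kp_iS_i(P_{E_i}x)$. Because $\Theta$ is the gradient of the strictly convex function $x\mapsto\sum_i p_i\phi_i(P_{E_i}x)$, it is globally injective, and its differential $D\Theta(x)=\sum_i p_iDS_i(P_{E_i}x)\circ P_{E_i}$ is a symmetric positive definite endomorphism of $\R^n$. Two geometric inequalities now control the remaining factors. The first is the Ball--Barthe determinant inequality
\begin{equation*}
\det\!\Bigl(\sum_{i=1}^kp_iA_i\circ P_{E_i}\Bigr)\ge\prod_{i=1}^k\bigl(\det{}_{E_i}A_i\bigr)^{p_i},
\end{equation*}
valid under \eqref{highdimcond0} for positive semidefinite $A_i$ on $E_i$; applied with $A_i=DS_i(P_{E_i}x)$ it bounds the product of Monge--Amp\`ere Jacobians from above by $\det D\Theta(x)$. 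The second is a Pythagoras-type contraction: the ``analysis map'' $w\mapsto(P_{E_i}w)_{i=1}^k$ is an isometry from $\R^n$ into $\bigoplus_i E_i$ equipped with the weighted inner product $\sum_i p_i\langle\cdot,\cdot\rangle$, so its adjoint $(v_1,\dots,v_k)\mapsto\sum_i p_iv_i$ has operator norm one; taking $v_i=S_i(P_{E_i}x)\in E_i$ yields $|\Theta(x)|^2\le\sum_i p_i|S_i(P_{E_i}x)|^2$.

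Combining the two inequalities bounds the integrand pointwise by $(2\pi)^{-n/2}e^{-|\Theta(x)|^2/2}\det D\Theta(x)$; the injectivity of $\Theta$ and the area formula then give
\begin{equation*}
\int_{\R^n}\prod_{i=1}^kf_i(P_{E_i}x)^{p_i}\,dx
\le(2\pi)^{-n/2}\int_{\R^n}e^{-|\Theta(x)|^2/2}\det D\Theta(x)\,dx
\le\int_{\R^n}(2\pi)^{-n/2}e^{-|y|^2/2}\,dy=1,
\end{equation*}
which after undoing the normalization is precisely \eqref{BL}. The main obstacle I anticipate is the Ball--Barthe determinant inequality: it is the only step that genuinely uses \eqref{highdimcond0} at the level of the $A_i$, and the standard proofs either apply the rank-one Brascamp--Lieb inequality to carefully chosen centered Gaussian test functions, or proceed by a delicate Cauchy--Binet expansion of the determinant combined with AM--GM. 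The remaining pieces---Brenier transport, the smoothing reduction, injectivity of $\Theta$, and the contraction lemma---are either routine or formal consequences of \eqref{highdimcond0}.
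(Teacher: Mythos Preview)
The paper is a survey and does not prove Theorem~\ref{BLtheo}; it merely states the result and, later in Section~\ref{secIntro}, lists Barthe's optimal-transport method \cite{Bar98} as one of the three standard proofs (together with the heat-equation approach and Lehec's probabilistic argument). Your proposal \emph{is} Barthe's argument: transport each normalized $f_i$ to the Gaussian on $E_i$ by a Brenier map $S_i$, assemble $\Theta(x)=\sum_ip_iS_i(P_{E_i}x)$ as the gradient of a strictly convex function on $\R^n$, and dominate the two leftover factors via the Ball--Barthe determinant inequality and the contraction $\bigl|\sum_ip_iv_i\bigr|^2\le\sum_ip_i|v_i|^2$ for $v_i\in E_i$ (both consequences of \eqref{highdimcond0}). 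All the steps are correct, and your diagnosis that the determinant inequality is the only genuinely nontrivial ingredient---provable either by Cauchy--Binet plus AM--GM or by testing the rank-one inequality on Gaussians---matches how Barthe himself isolates it. There is nothing further in the paper to compare against beyond the citation.
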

{\bf Remark} This is H\"older's inequality if $E_1=\ldots=E_k=\R^n$ and $P_{E_i}=I_n$, and hence $\sum_{i=1}^kp_i=1$.\\

We note that equality holds in Theorem~\ref{BLtheo}  if $f_i(x)=e^{-\pi\|x\|^2}$ for $i=1,\ldots,k$; and hence, each $f_i$ is a Gaussian density.
Actually,
Theorem~\ref{BLtheo} is an important special case discovered by Ball \cite{Bal91,Bal03}
in the rank one case and by Barthe \cite{Bar98} in the general case of the general Brascamp-Lieb inequality (cf. Theorem~\ref{BLgeneral}).

After partial results by Barthe \cite{Bar98}, Carlen,  Lieb,  Loss \cite{CLL04}
and Bennett, Carbery, Christ, Tao \cite{BCCT08}, it was Valdimarsson \cite{Val08}
who characterized equality in the Geometric Brascamp-Lieb inequality. In order to state his result, we need some notation.
Let $E_1,\ldots,E_k$  the proper linear subspaces of $\R^n$  and $p_1,\ldots,p_k>0$ satisfy
\eqref{highdimcond0}. As Bennett, Carbery, Christ, Tao \cite{BCCT08} observe, 
\eqref{highdimcond0} yields that for any non-zero linear subspace $V$, the map $\sum_{i=1}^k p_iP_V\circ P_{E_i}$ is the identity map on $V$, and hence considering traces show that
\begin{equation}
\label{sumEcapV}
\sum_{i=1}^k p_i\dim(E_i\cap V)\leq \dim V.
\end{equation}
In order to understand extremizers in \eqref{BL}, following
Carlen,  Lieb,  Loss \cite{CLL04} and Bennett, Carbery, Christ, Tao \cite{BCCT08}, we say that a non-zero linear subspace $V$
is a
critical subspace  if
$$
\sum_{i=1}^k p_i\dim(E_i\cap V)=\dim V,
$$
which is turn equivalent saying that
$$
\mbox{$E_i=(E_i\cap V)+ (E_i\cap V^\bot)$
for $i=1,\ldots,k$}
$$
by the argument leading to \eqref{sumEcapV} (cf. \cite{BCCT08}). We say that a critical subspace $V$ is indecomposable if
$V$ has no proper critical linear subspace.

Valdimarsson \cite{Val08} introduced the notions of independent subspaces and the dependent subspace.
We write $J$ to denote the set of $2^k$ functions
$\{1,\ldots,k\}\to\{0,1\}$. If $\varepsilon\in J$, then
let $F_{(\varepsilon)}=\cap_{i=1}^kE_i^{(\varepsilon(i))}$ where $E_i^{(0)}=E_i$ and $E_i^{(1)}=E_i^\bot$
for $i=1,\ldots,k$. We write $J_0$ to denote the subset of $\varepsilon\in J$ such that
${\rm dim}\,F_{(\varepsilon)}\geq 1$, and such an $F_{(\varepsilon)}$  is called independent following Valdimarsson \cite{Val08}. Readily $F_{(\varepsilon)}$ and $F_{(\tilde{\varepsilon})}$ are orthogonal
if $\varepsilon\neq\tilde{\varepsilon}$ for $\varepsilon,\tilde{\varepsilon}\in J_0$.
 In addition, we write $F_{\rm dep}$ to denote the orthogonal component of
$\oplus_{\varepsilon \in J_0}F_{(\varepsilon)}$. In particular, $\R^n$ can be written as a direct sum of pairwise orthogonal linear subspaces in the form
\begin{equation}
\label{independent-dependent0}
\R^n=\left(\oplus_{\varepsilon \in J_0}F_{(\varepsilon)}\right)\oplus F_{\rm dep}.
\end{equation}
Here it is possible that $J_0=\emptyset$, and hence $\R^n=F_{\rm dep}$, or
$F_{\rm dep}=\{0\}$, and hence $\R^n=\oplus_{\varepsilon \in J_0}F_{(\varepsilon)}$ in that case.

For a non-zero linear subspace $L\subset \R^n$, we say that a linear transformation $A:\,L\to L$ is positive definite
if  $\langle Ax,y\rangle=\langle x, Ay\rangle$ and $\langle x, Ax\rangle>0$ for any $x,y\in L\backslash\{0\}$.

\begin{theo}[Valdimarsson]
\label{BLtheoequa}
For  the proper linear subspaces  $E_1,\ldots,E_k$ of $\R^n$ and $p_1,\ldots,p_k>0$ satisfying
\eqref{highdimcond0}, let us assume that equality holds in the Brascamp-Lieb inequality \eqref{BL}
for non-negative $f_i\in L_1(E_i)$, $i=1,\ldots,k$.
If $F_{\rm dep}\neq\R^n$, then let $F_1,\ldots,F_\ell$ be the independent subspaces, and if
$F_{\rm dep}=\R^n$, then let $\ell=1$ and $F_1=\{0\}$. There exist
$b\in F_{\rm dep}$ and $\theta_i>0$ for $i=1,\ldots,k$,
integrable non-negative $h_{j}:\,F_j\to[0,\infty)$  for $j=1,\ldots,\ell$, and a positive definite matrix
$A:F_{\rm dep}\to F_{\rm dep}$ such that
the eigenspaces of $A$ are critical subspaces and
\begin{equation}
\label{BLtheoequaform}
f_i(x)=\theta_i e^{-\langle AP_{F_{\rm dep}}x,P_{F_{\rm dep}}x-b\rangle}\prod_{F_j\subset E_i}h_{j}(P_{F_j}(x))
\mbox{ \ \ \  for Lebesgue a.e. $x\in E_i$}.
\end{equation}
On the other hand, if for any $i=1,\ldots,k$, $f_i$ is of the form as in \eqref{BLtheoequaform}, then equality holds in \eqref{BL} for $f_1,\ldots,f_k$.
\end{theo}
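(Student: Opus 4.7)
The proof naturally splits into the (easy) sufficiency check and the (hard) necessity argument. For sufficiency, I would substitute the form \eqref{BLtheoequaform} into both sides of \eqref{BL}. Using the orthogonal decomposition \eqref{independent-dependent0} and Fubini, the left-hand integral factors as a product of integrals over each $F_{(\varepsilon)}$ and over $F_{\rm dep}$. On $F_{(\varepsilon)}$ only the factors $h_{(\varepsilon)}^{p_i}$ with $\varepsilon(i)=0$ survive, and testing \eqref{highdimcond0} on a nonzero $v\in F_{(\varepsilon)}$ gives $\sum_{i:\varepsilon(i)=0}p_i=1$; hence this factor contributes exactly $\int h_{(\varepsilon)}$, matching the corresponding factor on the right. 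The contribution from $F_{\rm dep}$ reduces, via the spectral decomposition of $A$ together with the hypothesis that each eigenspace is critical, to the classical Gaussian equality case of Theorem~\ref{BLtheo}, verified by a direct Gaussian integration.

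For necessity, I would first reduce to smooth strictly positive $f_i$ by convolving each with a small Gaussian on $E_i$: the convolved system still attains equality, and the conclusion \eqref{BLtheoequaform} is stable under the limit as the convolution parameter goes to zero. I would then induct on $n$ using the critical-subspace decomposition. If a proper critical subspace $V$ exists, the identity $E_i=(E_i\cap V)\oplus (E_i\cap V^\bot)$ shows that $\{E_i\cap V, p_i\}$ and $\{E_i\cap V^\bot, p_i\}$ are Geometric Brascamp-Lieb data on $V$ and $V^\bot$ respectively. Fubini combined with Theorem~\ref{BLtheo} applied to each factor expresses the global BL functional as a product of the analogous functionals on $V$ and $V^\bot$, so equality on $\R^n$ forces equality on both. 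The inductive hypothesis supplies the form \eqref{BLtheoequaform} on each piece, and these assemble into the global form using the compatibility of the critical-subspace decomposition with the splitting $\R^n=V\oplus V^\bot$ and the resulting block-diagonal structure of $A$.

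The main obstacle, and the substantive content of Valdimarsson's theorem, is the base case of an indecomposable datum. Here either some $F_{(\varepsilon)}=\R^n$, in which case $J_0=\{\varepsilon\}$ and each $f_i$ is unconstrained on $E_i$ so the conclusion is trivial, or $F_{\rm dep}=\R^n$ with no proper critical subspace, and one must show that equality forces the $f_i$ to be Gaussians sharing a common positive definite matrix $A$ whose only critical eigenspace is $\R^n$ itself, so $A=\lambda I$. I would approach this via the heat-flow monotonicity of the BL functional developed by Carlen-Lieb-Loss \cite{CLL04} and Bennett-Carbery-Christ-Tao \cite{BCCT08}: the functional is non-decreasing along the semigroup $f_i\mapsto f_i\ast \gamma_t$, and equality in the original inequality propagates through the flow; tracking equality in the derivative of the monotonicity identity yields a rigid pointwise differential identity on the functions $\log f_i$ which, in the absence of nontrivial critical or independent subspaces, admits only centered Gaussian solutions. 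Combined with the inductive reduction and the observation that any critical eigenspace of $A$ would violate indecomposability, this completes the proof.
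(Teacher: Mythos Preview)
The paper is a survey and does not prove Theorem~\ref{BLtheoequa}; it merely states the result and attributes it to Valdimarsson \cite{Val08}. There is therefore no proof in the paper against which to compare your proposal.

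On the proposal itself: the architecture you describe---sufficiency via Fubini over the decomposition \eqref{independent-dependent0}, necessity by induction along proper critical subspaces, and a heat-flow argument in the indecomposable base case---is indeed the strategy of \cite{Val08}, building on \cite{CLL04,BCCT08}. Two points need more than you indicate. First, in the inductive step, the global BL functional does not literally factor over $V\oplus V^\bot$ because $f_i(P_{E_i}x)$ need not split a priori; the correct chain applies \eqref{BL} on $V$ for each fixed $V^\bot$-slice and then on $V^\bot$ to the marginals $z\mapsto\int_{E_i\cap V}f_i(y+z)\,dy$, and the subsequent reassembly of the slice-wise conclusions (whose parameters $A,b,\theta_i,h_j$ may a priori depend on the slice) into a single global expression \eqref{BLtheoequaform} is a genuine step. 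Second, your preliminary Gaussian-convolution reduction is the same heat flow you later invoke in the base case, so it is redundant rather than a simplification, and the claim that the form \eqref{BLtheoequaform} is ``stable under the limit $t\to0$'' itself needs an argument. Also, since the $E_i$ are proper subspaces, no $F_{(\varepsilon)}$ can equal $\R^n$, so your base-case dichotomy collapses to the single case $F_{\rm dep}=\R^n$; extracting the Gaussian conclusion there from constancy of the heat-flow functional is precisely the technical core of \cite{Val08}.
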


Theorem~\ref{BLtheoequa} explains the term "independent subspaces" because the functions
$h_{j}$ on $F_j$ are chosen freely and independently  from each other.

A reverse form of the Geometric Brascamp-Lieb inequality was proved by
Barthe \cite{Bar98}. We write $\int^*_{\R^n}\varphi $ to denote the
outer integral for a possibly non-integrable function $\varphi:\,\R^n\to[0,\infty)$; namely, the infimum (actually minimum)
of $\int_{\R^n} \psi$ where $\psi\geq \varphi$ is Lebesgue measurable.

\begin{theo}[Barthe]
\label{RBLtheo}
For  the non-trivial linear subspaces  $E_1,\ldots,E_k$ of $\R^n$ and $p_1,\ldots,p_k>0$ satisfying
\eqref{highdimcond0}, and for non-negative $f_i\in L_1(E_i)$, we have
\begin{equation}
\label{RBL}
\int_{\R^n}^*\sup_{x=\sum_{i=1}^kp_ix_i,\, x_i\in E_i}\;\prod_{i=1}^kf_i(x_i)^{p_i}\,dx
\geq \prod_{i=1}^k\left(\int_{E_i}f_i\right)^{p_i}.
\end{equation}
\end{theo}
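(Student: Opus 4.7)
The plan is to follow Barthe's mass-transportation strategy, which mirrors the way one can prove the forward inequality \eqref{BL} by pushing Gaussians onto the $f_i$. First I perform the standard reductions: by monotone convergence it is enough to treat smooth, strictly positive, compactly supported $f_i$, and by rescaling both sides of \eqref{RBL} I may assume $\int_{E_i}f_i=1$ for each $i$. As a reference density on $E_i$ I take the Gaussian $g_i(z)=e^{-\pi\|z\|^2}$, which also has total mass $1$. By Brenier's theorem there is a convex function $\varphi_i$ on $E_i$ whose gradient $T_i=\nabla\varphi_i$ pushes $g_i$ forward to $f_i$; in particular $DT_i(z)$ is a symmetric positive definite endomorphism of $E_i$, and the Monge-Amp\`ere equation reads $g_i(z)=f_i(T_i(z))\det DT_i(z)$.

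Next I define the barycentric map $S\colon\R^n\to\R^n$ by $S(y)=\sum_{i=1}^k p_iT_i(P_{E_i}y)$. For any $y$, setting $x_i=T_i(P_{E_i}y)\in E_i$ gives $S(y)=\sum_{i=1}^k p_ix_i$, which is an admissible representation of $x=S(y)$ for the supremum in \eqref{RBL}, so at such $x$ the inner supremum is at least $\prod_i f_i(T_i(P_{E_i}y))^{p_i}$. A direct computation yields
\[
DS(y)=\sum_{i=1}^k p_i\,P_{E_i}\,DT_i(P_{E_i}y)\,P_{E_i},
\]
which is symmetric and positive definite thanks to \eqref{highdimcond0}. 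The analytic heart of the argument is the determinantal inequality
\[
\det\!\Big(\sum_{i=1}^k p_i P_{E_i}A_iP_{E_i}\Big)\ \ge\ \prod_{i=1}^k (\det A_i)^{p_i},
\]
valid for symmetric positive definite $A_i\colon E_i\to E_i$ under \eqref{highdimcond0}. This is precisely the Gaussian case of Theorem~\ref{BLtheo}, applied to $f_i(z)=e^{-\pi\langle A_iz,z\rangle}$, so it may be invoked as given. Specialising to $A_i=DT_i(P_{E_i}y)$ yields $\det DS(y)\ge\prod_i (\det DT_i(P_{E_i}y))^{p_i}$.

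Performing the change of variables $x=S(y)$ and using the Monge-Amp\`ere relations together with the identity $\sum_i p_i\|P_{E_i}y\|^2=\|y\|^2$ coming from \eqref{highdimcond0}, the chain of inequalities collapses to
\[
\int^*_{\R^n}\sup_{x=\sum_ip_ix_i,\,x_i\in E_i}\prod_{i=1}^k f_i(x_i)^{p_i}\,dx\ \ge\ \int_{\R^n}\prod_{i=1}^k g_i(P_{E_i}y)^{p_i}\,dy\ =\ \int_{\R^n}e^{-\pi\|y\|^2}\,dy\ =\ 1,
\]
which after undoing the normalisation is exactly \eqref{RBL}. The main obstacle is this final change of variables: $S$ need not be globally injective, and the inner supremum need not be Lebesgue measurable, which is precisely why the outer integral $\int^*$ is used in the statement. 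I would handle this either by replacing each Brenier map by a Knothe--Rosenblatt triangular transport (which is everywhere injective and whose Jacobian is lower triangular with the same determinant, so the determinantal inequality still applies in the triangular situation), or by a smooth approximation that renders $S$ a diffeomorphism, then invoking the area formula and passing to the limit through $\int^*$.
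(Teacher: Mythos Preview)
The paper is a survey and does not give its own proof of Theorem~\ref{RBLtheo}; it simply attributes the result to Barthe \cite{Bar98} and remarks later that Barthe's argument is by optimal transportation. Your proposal is exactly that argument, so in spirit you are reproducing what the paper cites rather than offering an alternative route.

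Two technical remarks on your write-up. First, your worry about injectivity of $S$ is unnecessary: since each $T_i=\nabla\varphi_i$ with $\varphi_i$ convex on $E_i$, the function $\Phi(y)=\sum_{i=1}^k p_i\,\varphi_i(P_{E_i}y)$ is convex on $\R^n$ and satisfies $\nabla\Phi=S$; moreover $D^2\Phi(y)=DS(y)$ is the positive definite matrix you computed, so $\Phi$ is strictly convex and $S$ is globally injective. The change of variables then goes through via the area formula for gradients of convex functions (using Aleksandrov second-order differentiability a.e.), exactly as in \cite{Bar98}. Second, your fallback to Knothe--Rosenblatt maps is not straightforward in the higher-rank case: the Jacobians $DT_i$ become lower triangular rather than symmetric positive definite, and the determinantal inequality $\det\big(\sum_i p_i P_{E_i}A_iP_{E_i}\big)\ge\prod_i(\det A_i)^{p_i}$ that you invoke is proved (and, as you note, is equivalent to the Gaussian case of Theorem~\ref{BLtheo}) for symmetric positive definite $A_i$, not for triangular ones. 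So stick with the Brenier/gradient-of-convex formulation, which already gives injectivity for free.
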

\noindent{\bf Remark.} This is the Pr\'ekopa-Leindler inequality (cf. Theorem~\ref{PL})
 if $E_1=\ldots=E_k=\R^n$ and $P_{E_i}=I_n$, and hence $\sum_{i=1}^kp_i=1$. \\

 We say that a function $h:\,\R^n\to[0,\infty)$
is log-concave if $h((1-\lambda)x+\lambda\,y)\geq h(x)^{1-\lambda}h(y)^\lambda$ for any
$x,y\in\R^n$ and $\lambda\in(0,1)$; or in other words, $h=e^{-W}$ for a convex function $W:\,\R^n\to(-\infty,\infty]$.
B\"or\"oczky, Kalantzopoulos, Xi \cite{BKX23} prove the following characterization of equality in the Geometric Barthe's inequality
\eqref{RBL}.

\begin{theo}[B\"or\"oczky, Kalantzopoulos, Xi]
\label{RBLtheoequa}
For linear subspaces  $E_1,\ldots,E_k$ of $\R^n$ and $p_1,\ldots,p_k>0$ satisfying
\eqref{highdimcond0},
if $F_{\rm dep}\neq\R^n$, then let $F_1,\ldots,F_\ell$ be the independent subspaces, and if
$F_{\rm dep}=\R^n$, then let $\ell=1$ and $F_1=\{0\}$.

If equality holds in the Geometric Barthe's inequality \eqref{RBL}
for non-negative $f_i\in L_1(E_i)$ with $\int_{E_i}f_i>0$, $i=1,\ldots,k$, then
\begin{equation}
\label{RBLtheoequaform}
f_i(x)=\theta_i e^{-\langle AP_{F_{\rm dep}}x,P_{F_{\rm dep}}x-b_i\rangle}\prod_{F_j\subset E_i}h_{j}(P_{F_j}(x-w_i)) \mbox{ \ \ \  for Lebesgue a.e. $x\in E_i$}
\end{equation}
where
\begin{itemize}
\item $\theta_i>0$,
$b_i\in E_i\cap F_{\rm dep}$ and $w_i\in E_i$ for $i=1,\ldots,k$,
\item $h_{j}\in L_1(F_j)$ is non-negative for $j=1,\ldots,\ell$, and in addition,
$h_j$ is log-concave if there exist $\alpha\neq \beta$ with $F_j\subset E_\alpha\cap E_\beta$,
\item $A:F_{\rm dep}\to F_{\rm dep}$ is a positive definite matrix such that
the eigenspaces of $A$ are critical subspaces.
\end{itemize}
On the other hand, if for any $i=1,\ldots,k$, $f_i$ is of the form as in \eqref{RBLtheoequaform} and equality holds for  all
$x\in E_i$ in \eqref{RBLtheoequaform}, then equality holds in \eqref{RBL} for $f_1,\ldots,f_k$.
\end{theo}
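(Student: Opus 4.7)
The plan is to unpack Barthe's transportation proof of \eqref{RBL} and track at each step which configurations of the $f_i$ produce equality, supplementing this with the known equality case of Pr\'ekopa-Leindler and with Valdimarsson's Theorem~\ref{BLtheoequa} for the forward inequality.

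I would first reduce via the orthogonal splitting \eqref{independent-dependent0}. Each $E_i$ is the orthogonal sum of the independent subspaces $F_j\subset E_i$ together with $E_i\cap F_{\rm dep}$, since $F_{(\varepsilon)}\subset E_i$ or $F_{(\varepsilon)}\subset E_i^\bot$ for every $\varepsilon\in J_0$, and $F_{\rm dep}$ is a critical subspace. Consequently both the constraint $x=\sum_i p_ix_i$ and the product $\prod_i f_i(x_i)^{p_i}$ factor across these orthogonal pieces, so the LHS of \eqref{RBL} splits as a product and equality in \eqref{RBL} is equivalent to equality on each factor.

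On an independent $F_j$ contained in a single $E_i$, the corresponding factor is a tautology imposing no constraint beyond integrability, with the translate $w_i$ reflecting the freedom in choosing the $F_j$-component of $x_i$. On an $F_j$ with $F_j\subset E_\alpha\cap E_\beta$ for some $\alpha\neq\beta$, criticality of $F_j$ forces $\sum_{i:\,F_j\subset E_i}p_i=1$, so the factor reduces to a multi-term Pr\'ekopa-Leindler inequality on $F_j$. Its classical equality case forces the $f_i|_{F_j}$ (for $i$ with $F_j\subset E_i$) to coincide, up to translates $w_i$ and scalar multiples, with a common log-concave $h_j$, yielding the claimed form.

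The main obstacle is the $F_{\rm dep}$-factor, which we may assume equals $\R^n$ with no independent subspaces. I would introduce Brenier maps $T_i=\nabla\varphi_i$ transporting a centered Gaussian on $E_i$ to $f_i$, apply the Monge-Amp\`ere change of variables, and invoke the Minkowski-type determinantal inequality $\det\bigl(\sum_i p_iM_i\bigr)\geq\prod_i\det(M_i)^{p_i}$ used in Barthe's proof, where $M_i$ is built from $D^2\varphi_i\circ P_{E_i}$. Equality almost everywhere in this concavity of $\log\det$ step is the key rigidity: it forces each Hessian $D^2\varphi_i$ to be essentially a constant positive definite matrix, so each $T_i$ is affine and each $f_i$ is Gaussian. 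Once the $f_i$ are known to be Gaussian, matching the explicit Gaussian computation of both sides of \eqref{RBL}, combined with Valdimarsson's Theorem~\ref{BLtheoequa} via the Legendre-type duality between Gaussian extremizers of the Brascamp-Lieb and of the Barthe inequalities, produces the single positive definite $A$ with critical eigenspaces, while the individual shifts $b_i\in E_i\cap F_{\rm dep}$ arise from recentring each Gaussian. The converse direction is a direct verification that if every $f_i$ has the form \eqref{RBLtheoequaform}, then the decomposition reduces to independent equalities on each $F_j$ (trivial or Pr\'ekopa-Leindler) and the known Gaussian equality on $F_{\rm dep}$, yielding equality in \eqref{RBL}.
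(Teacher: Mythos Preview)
The paper is a survey and does not actually prove Theorem~\ref{RBLtheoequa}; it states the result, cites \cite{BKX23} for the proof, and offers only a short explanation (the paragraph following Theorem~\ref{PL}) of why the functions $h_j$ must be log-concave whenever $F_j\subset E_\alpha\cap E_\beta$ for some $\alpha\neq\beta$. That explanation is exactly your Pr\'ekopa--Leindler argument on the independent pieces, so that portion of your sketch is aligned with what the paper records.

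Your reduction step, however, contains a genuine gap. You assert that ``the product $\prod_i f_i(x_i)^{p_i}$ factor[s] across these orthogonal pieces,'' and hence that the left-hand side of \eqref{RBL} splits as a product over the $F_j$ and $F_{\rm dep}$. But the $f_i$ are not assumed to have product structure with respect to the decomposition $E_i=(E_i\cap F_{\rm dep})\oplus\bigoplus_{F_j\subset E_i}F_j$; that product structure is precisely part of the conclusion \eqref{RBLtheoequaform} you are trying to establish. While the linear constraint $x=\sum_i p_ix_i$ does decouple along the orthogonal splitting, the objective $\prod_i f_i(x_i)^{p_i}$ does not, so neither the supremum nor its integral factors, and equality in \eqref{RBL} is not a priori equivalent to ``equality on each factor.'' The argument in \cite{BKX23} must run the transportation proof on all of $\R^n$ and extract the product structure from the equality analysis, not assume it at the outset. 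Your treatment of the $F_{\rm dep}$ part also glosses over the hardest step: equality a.e.\ in the Ball--Barthe determinantal inequality only tells you, at each point $z$, how the Hessians $D^2\varphi_i(P_{E_i}z)$ relate to one another; passing from this pointwise relation to the global conclusion that each $\varphi_i$ is quadratic (hence each $f_i$ Gaussian) with a common quadratic form $A$ whose eigenspaces are critical is the substantial part of the analysis, not an immediate consequence.
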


In particular, if for any $\alpha=1,\ldots,k$, the subspaces $\{E_i\}_{i\neq \alpha}$ span $\R^n$ in Theorem~\ref{RBLtheoequa},
then any extremizer of the Geometric Barthe's inequality is log-concave.

We note that Barthe's inequality \eqref{RBL} extends the celebrated Pr\'ekopa-Leindler inequality
Theorem~\ref{PL}
(proved in various forms by Pr\'ekopa \cite{Pre71,Pre73}, Leindler \cite{Lei72} and
 Borell \cite{Bor75}) whose equality case was clarified by Dubuc \cite{Dub77}
(see the survey Gardner \cite{gardner}). 
\begin{theo}[Pr\'ekopa, Leindler, Dubuc]
\label{PL}
For $m\geq 2$, $\lambda_1,\ldots,\lambda_m\in(0,1)$
with $\lambda_1+\ldots+\lambda_m=1$ and
 integrable $\varphi_1,\ldots,\varphi_m:\,\R^n\to[0,\infty)$, we have
\begin{equation}
\label{PLineq}
\int_{\R^n}^* \sup_{x=\sum_{i=1}^m\lambda_ix_i,\, x_i\in \R^n}\;\prod_{i=1}^m\varphi_i(x_i)^{\lambda_i}\,dx
\geq \prod_{i=1}^m\left(\int_{\R^n}\varphi_i\right)^{\lambda_i},
\end{equation}
and if equality holds and the left hand side is positive and finite, then there exist
a log-concave function $\varphi$ and
 $a_i>0$ and $b_i\in\R^n$ for $i=1,\ldots,m$ such that
$$
\varphi_i(x)=a_i\, \varphi(x-b_i)
$$
for Lebesgue a.e. $x\in\R^n$, $i=1,\ldots,m$.
\end{theo}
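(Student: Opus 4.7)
The plan is to derive Theorem~\ref{PL} as the particular case of Barthe's inequality (Theorem~\ref{RBLtheo}) and its equality characterization (Theorem~\ref{RBLtheoequa}) in which every subspace $E_i$ equals the full ambient space $\R^n$.

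\textbf{Inequality.} First I would take $k=m$, $E_i=\R^n$, $p_i=\lambda_i$ and $f_i=\varphi_i$ for $i=1,\ldots,m$. Then $P_{E_i}=I_n$ and $\sum_{i=1}^m p_iP_{E_i}=(\sum_i\lambda_i)I_n=I_n$, so \eqref{highdimcond0} is satisfied. The two sides of \eqref{RBL} specialize verbatim to those of \eqref{PLineq}, giving the inequality.

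\textbf{Equality case.} Here I would compute the independent/dependent decomposition \eqref{independent-dependent0} for this datum. Since $E_i^\bot=\{0\}$, the subspace $F_{(\varepsilon)}=\cap_{i=1}^k E_i^{(\varepsilon(i))}$ equals $\R^n$ when $\varepsilon\equiv 0$ and equals $\{0\}$ otherwise. Hence $J_0$ consists of the single function $\varepsilon\equiv 0$, there is a unique independent subspace $F_1=\R^n$, and $F_{\rm dep}=\{0\}$. In particular the Gaussian factor in \eqref{RBLtheoequaform} is trivial, so \eqref{RBLtheoequaform} collapses to
\[
\varphi_i(x)=\theta_i\,h_1(x-w_i)\quad \mbox{for Lebesgue a.e.\ } x\in\R^n.
\]
Since $m\geq 2$, there exist $\alpha\neq\beta$ with $F_1\subset E_\alpha\cap E_\beta$, and therefore Theorem~\ref{RBLtheoequa} forces $h_1$ to be log-concave. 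Setting $\varphi:=h_1$, $a_i:=\theta_i$ and $b_i:=w_i$ then reproduces the Dubuc form.

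\textbf{Where the work would be.} This reduction contains no substantial obstacle, since PL is literally the case $E_i=\R^n$ of the Geometric Barthe inequality. The only care needed is to verify that the outer integral and the sup-convolution in \eqref{RBL} specialize exactly to those in \eqref{PLineq}, and to observe that the hypothesis $m\geq 2$ is precisely what triggers the log-concavity clause in Theorem~\ref{RBLtheoequa}; the rest is bookkeeping.
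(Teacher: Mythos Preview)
Your reduction of the inequality \eqref{PLineq} to Theorem~\ref{RBLtheo} with $E_i=\R^n$ is exactly the specialization recorded in the Remark following Theorem~\ref{RBLtheo}; nothing more is done in the paper for the inequality part, which is simply attributed to Pr\'ekopa, Leindler and Borell.

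For the equality case, however, your argument is circular within the logical structure of the paper. Theorem~\ref{PL} is not proved here; the equality characterization is cited to Dubuc as an independent classical result. Crucially, the paragraph immediately following Theorem~\ref{RBLtheoequa} explains that the log-concavity clause for $h_j$ (when $F_j\subset E_\alpha\cap E_\beta$ for some $\alpha\neq\beta$) is itself obtained by invoking Dubuc's equality conditions in the Pr\'ekopa--Leindler inequality \eqref{PLineq}. In other words, the very feature of Theorem~\ref{RBLtheoequa} that you appeal to in order to force $h_1$ to be log-concave already rests on Theorem~\ref{PL}. Your computation correctly shows that Theorem~\ref{PL} is \emph{formally} the special case $E_i=\R^n$ of Theorem~\ref{RBLtheoequa}, but it does not constitute an independent proof of the Dubuc equality characterization; in this survey, the dependence runs the other way.
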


The explanation for the phenomenon concerning the log-concavity of $h_j$ in Theorem~\ref{RBLtheoequa} is as follows. Let
$\ell\geq 1$ and $j\in\{1,\ldots,\ell\}$, and hence $\sum_{E_i\supset F_j}p_i=1$. If $f_1,\ldots,f_k$ are of the form \eqref{RBLtheoequaform}, then equality in Barthe's inequality \eqref{RBL} yields
$$
\int^*_{F_j}\sup_{x=\sum_{E_i\supset F_j}p_i x_i\atop x_i\in F_j}h_{j}\Big(x_i-P_{F_j}w_i\Big)^{p_i}\,dx=
\prod_{E_i\supset F_j}\left(\int_{F_j}h_{j}\Big(x-P_{F_j}w_i\Big)\,dx\right)^{p_i}
\left(= \int_{F_j} h_j(x)\,dx\right).
$$
Therefore, if there exist $\alpha\neq \beta$ with $F_j\subset E_\alpha\cap E_\beta$, then
the equality conditions in the Pr\'ekopa-Leindler inequality
\eqref{PLineq} imply that $h_j$ is log-concave. On the other hand, if there exists $\alpha\in \{1,\ldots,k\}$ such that
$F_j\subset E_\beta^\bot$ for any  $\beta\neq\alpha$, then we do not have any condition on $h_j$, and $p_\alpha=1$.\\

For completeness, let us state and discuss the general Brascamp-Lieb inequality and its reverse form due to Barthe.
The following was proved by Brascamp, Lieb \cite{BrL76}  in the rank one case and Lieb \cite{Lie90} in general.

\begin{theo}[Brascamp-Lieb Inequality]
\label{BLgeneral}
Let $B_i:\R^n\to H_i$ be surjective linear maps where $H_i$ is $n_i$-dimensional Euclidean space,
$n_i\geq 1$, for $i=1,\ldots,k$ such that
$$
\cap_{i=1}^k {\rm ker}\,B_i=\{0\},
$$
and let $p_1,\ldots,p_k>0$ satisfy $\sum_{i=1}^kp_in_i=n$.
Then for non-negative $f_i\in L_1(H_i)$, we have
\begin{equation}
\label{BLgeneraleq}
\int_{\R^n}\prod_{i=1}^kf_i(B_ix)^{p_i}\,dx
\leq {\rm BL}(\mathbf{B},\mathbf{p})\cdot\prod_{i=1}^k\left(\int_{H_i}f_i\right)^{p_i}
\end{equation}
where  the optimal factor ${\rm BL}(\mathbf{B},\mathbf{p})\in(0,\infty]$ depending on $\mathbf{B}=(B_1,\ldots,B_k)$ and $\mathbf{p}=(p_1,\ldots,p_k)$ (which we call a  Brascamp-Lieb datum),
and ${\rm BL}(\mathbf{B},\mathbf{p})$ is determined by choosing centered Gaussians $f_i(x)=e^{-\langle A_ix,x\rangle}$ for some symmetric positive definite $n_i\times n_i$ matrix  $A_i$, $i=1,\ldots,k$ and $x\in H_i$.
\end{theo}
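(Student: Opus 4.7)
The plan is to define ${\rm BL}(\mathbf{B},\mathbf{p})$ as the supremum of the ratio in \eqref{BLgeneraleq}, introduce the restricted supremum ${\rm BL}_G$ over centered Gaussian inputs, and show they coincide by means of a heat-semigroup monotonicity argument (following Lieb, cf.\ also Carlen--Lieb--Loss and Bennett--Carbery--Christ--Tao).

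First, set
$$
{\rm BL}(\mathbf{B},\mathbf{p}):=\sup\;\frac{\int_{\R^n}\prod_{i=1}^k f_i(B_ix)^{p_i}\,dx}{\prod_{i=1}^k\big(\int_{H_i} f_i\big)^{p_i}} \;\in (0,\infty],
$$
the sup taken over non-negative $f_i\in L_1(H_i)$ with $\int f_i>0$, and let ${\rm BL}_G$ be the analogous sup restricted to $f_i(u)=e^{-\langle A_iu,u\rangle}$ with $A_i$ symmetric positive-definite. Direct Gaussian integration, with the dimension-dependent prefactors cancelling thanks to $\sum_i p_in_i=n$, yields
$$
{\rm BL}_G(\mathbf{B},\mathbf{p})=\sup_{A_i \text{ pos.\ def.}}\frac{\prod_{i=1}^k(\det A_i)^{p_i/2}}{\det\!\big(\sum_{i=1}^kp_iB_i^*A_iB_i\big)^{1/2}},
$$
where the denominator is positive by the hypothesis $\cap_i\ker B_i=\{0\}$. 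The trivial inequality ${\rm BL}_G\leq{\rm BL}$ is clear; the content of the theorem is the reverse bound ${\rm BL}\leq{\rm BL}_G$.

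For this direction, evolve each input by the heat semigroup on $H_i$: let $u_i(t,\cdot):=f_i*g_t^{(i)}$ with $g_t^{(i)}$ the Gaussian heat kernel. Mass is conserved, $\int_{H_i}u_i(t,\cdot)=\int_{H_i}f_i$, so in
$$
Q(t):=\frac{\int_{\R^n}\prod_{i=1}^k u_i(t,B_ix)^{p_i}\,dx}{\prod_{i=1}^k\big(\int_{H_i}u_i(t,\cdot)\big)^{p_i}}
$$
only the numerator varies with $t$. Differentiating $\log Q$, applying $\d_tu_i=\tfrac12\Delta_{H_i}u_i$, and integrating by parts reduces $\tfrac{d}{dt}\log Q(t)$ to an integral of a quadratic form in the fields $(\nabla\log u_i)\circ B_i$. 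A pointwise Cauchy--Schwarz estimate, weighted against the Gaussian critical point that realises ${\rm BL}_G$, makes this non-negative. Since the rescaled $u_i$ become asymptotically Gaussian as $t\to\infty$, $Q(t)\to Q(\infty)\leq{\rm BL}_G$, and monotonicity forces $Q(0)\leq{\rm BL}_G$.

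The main obstacle is the pointwise positivity of the quadratic form occurring in $\tfrac{d}{dt}\log Q(t)$: it is not manifest from the integration-by-parts computation. Overcoming it requires choosing Lagrange-multiplier matrices that solve the critical-point equations for ${\rm BL}_G$; only with this calibration does the Cauchy--Schwarz step produce the correct sign. Once the monotonicity is established, passage to the Gaussian limit as $t\to\infty$ and the determinantal formula above complete the identification ${\rm BL}={\rm BL}_G$ and show that the optimal constant is indeed witnessed by centered Gaussian extremizers.
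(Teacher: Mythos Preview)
The paper does not contain a proof of Theorem~\ref{BLgeneral}; it is a survey and merely states the result with attribution to Brascamp--Lieb \cite{BrL76} and Lieb \cite{Lie90}, later listing optimal transport, heat flow, and probabilistic approaches as known routes. So there is no ``paper's own proof'' to compare against, and your sketch must be assessed on its own merits.

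Your outline follows the heat-semigroup strategy of Carlen--Lieb--Loss and Bennett--Carbery--Christ--Tao, and the overall architecture (evolve, show monotonicity of $Q$, pass to a Gaussian limit) is sound \emph{when it applies}. The genuine gap is your reliance on ``the Gaussian critical point that realises ${\rm BL}_G$'' to calibrate the Cauchy--Schwarz step. Such a maximiser need not exist: as the paper itself recalls (see the discussion around Theorem~\ref{Brascamp-Lieb-polytope} and the Young-inequality example), Gaussian extremizers exist only when the datum is equivalent to a geometric one, and in particular may fail for $\mathbf{p}$ on the relative boundary of the Brascamp--Lieb polytope even though ${\rm BL}(\mathbf{B},\mathbf{p})<\infty$. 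Without the calibrating matrices solving the critical-point equations, your positivity argument for $\tfrac{d}{dt}\log Q(t)$ does not go through, and neither a near-maximiser nor a limiting argument repairs this directly, because the monotonicity identity you invoke is tied to the \emph{exact} closure condition, not an approximate one.

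This is not a mere technicality: both Lieb's original proof and the BCCT proof of ${\rm BL}={\rm BL}_G$ handle the non-extremizable case by extra structure---Lieb via tensorisation and rearrangement, BCCT via an induction on critical subspaces that reduces to the ``simple'' case where extremizers do exist. Your sketch would become correct if you either (i) restrict to geometric data (where the standard Gaussian supplies the calibration and the monotonicity is clean, as in Barthe--Huet \cite{BaH09}), or (ii) insert the BCCT structural induction to reduce the general statement to that case. As written, the step ``choosing Lagrange-multiplier matrices that solve the critical-point equations for ${\rm BL}_G$'' presupposes exactly what may fail.
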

\noindent{\bf Remark} The Geometric Brascamp-Lieb Inequality is readily a special case of
\eqref{BLgeneraleq} where  ${\rm BL}(\mathbf{B},\mathbf{p})=1$. We note that \eqref{BLgeneraleq}
 is H\"older's inequality if $H_1=\ldots=H_k=\R^n$ and each $B_i=I_n$, and hence ${\rm BL}(\mathbf{B},\mathbf{p})=1$
and $\sum_{i=1}^kp_i=1$ in that case.

The condition $\sum_{i=1}^kp_in_i=n$ makes sure that for any $\lambda>0$, the inequality \eqref{BLgeneraleq} is invariant under replacing $f_1(x_1),\ldots,f_k(x_k)$ by $f_1(\lambda x_1),\ldots,f_k(\lambda x_k)$, $x_i\in H_i$.\\

We say that two Brascamp-Lieb datum $\{(B_i,p_i)\}_{i=1,\ldots,k}$ and
$\{(B'_i,p'_i)\}_{i=1,\ldots,k'}$ as in Theorem~\ref{BLgeneral}
are called equivalent if $k'=k$, $p'_i=p_i$, and there exists linear isomorphisms
$\Psi:\R^n\to\R^n$ and
 $\Phi_i:H_i\to H'_i$,
$i=1,\ldots,k$, such that $B'_i=\Phi_i\circ B_i\circ \Psi$. It was proved by
Carlen,  Lieb,  Loss \cite{CLL04} in the rank one case, and by Bennett, Carbery, Christ, Tao \cite{BCCT08}
in general that there exists a set of extremizers $f_1,\ldots,f_k$ for \eqref{BLgeneraleq} if and only
if the Brascamp-Lieb datum $\{(B_i,p_i)\}_{i=1,\ldots,k}$ is equivalent to some Geometric Brascamp-Lieb datum.
Therefore, Valdimarsson's Theorem~\ref{BLtheoequa} provides a full characterization of the equality case
in Theorem~\ref{BLgeneral}, as well.

The following reverse version of the Brascamp-Lieb inequality was proved by Barthe in \cite{Bar97} in the rank one case, and
in \cite{Bar98} in general.

\begin{theo}[Barthe's Inequality]
\label{RBLgeneral}
Let $B_i:\R^n\to H_i$ be surjective linear maps where $H_i$ is $n_i$-dimensional Euclidean space,
$n_i\geq 1$, for $i=1,\ldots,k$ such that
$$
\cap_{i=1}^k {\rm ker}\,B_i=\{0\},
$$
and let $p_1,\ldots,p_k>0$ satisfy $\sum_{i=1}^kp_in_i=n$.
Then for non-negative $f_i\in L_1(H_i)$, we have
\begin{equation}
\label{RBLgeneraleq}
\int_{\R^n}^*
\sup_{x=\sum_{i=1}^kp_i B_i^*x_i,\, x_i\in H_i}\;
\prod_{i=1}^kf_i(x_i)^{p_i}\,dx
\geq {\rm RBL}(\mathbf{B},\mathbf{p})\cdot \prod_{i=1}^k\left(\int_{H_i}f_i\right)^{p_i}
\end{equation}
where  the optimal factor ${\rm RBL}(\mathbf{B},\mathbf{p})\in[0,\infty)$ depends on the Brascamp-Lieb datum $\mathbf{B}=(B_1,\ldots,B_k)$ and $\mathbf{p}=(p_1,\ldots,p_k)$,
and ${\rm RBL}(\mathbf{B},\mathbf{p})$ is determined by choosing centered Gaussians $f_i(x)=e^{-\langle A_ix,x\rangle}$ for some symmetric positive definite $n_i\times n_i$ matrix  $A_i$, $i=1,\ldots,k$ and $x\in H_i$.
\end{theo}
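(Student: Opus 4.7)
The plan is to prove Theorem~\ref{RBLgeneral} by combining an explicit Gaussian computation with a Brenier transport argument that reduces the general inequality to the already-established Geometric Barthe inequality (Theorem~\ref{RBLtheo}).

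First I would substitute $f_i(x_i)=e^{-\langle A_i x_i,x_i\rangle}$ into both sides of \eqref{RBLgeneraleq} for arbitrary symmetric positive definite $A_i$ on $H_i$. The integrals $\int_{H_i}f_i=\pi^{n_i/2}(\det A_i)^{-1/2}$ are classical. The inner supremum is a strictly concave quadratic maximization in $(x_i)_i$ under the linear constraint $x=\sum_i p_iB_i^*x_i$, so its optimizer depends linearly on $x$ and the supremum value is itself a centered Gaussian $e^{-\langle Q(\mathbf{A})x,x\rangle}$, with $Q(\mathbf{A})$ a Schur-complement expression in $\mathbf{A}$. Integrating in $x$ and using the scaling identity $\sum_i p_in_i=n$ to cancel the $\pi$-factors, the ratio $\mathrm{LHS}/\prod_i(\int f_i)^{p_i}$ becomes a purely determinantal function of $\mathbf{A}$, and its infimum over all admissible $\mathbf{A}$ defines the Gaussian optimal constant $\mathrm{RBL}_g(\mathbf{B},\mathbf{p})\in[0,\infty)$. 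Since Gaussians are admissible test functions, the trivial direction $\mathrm{RBL}(\mathbf{B},\mathbf{p})\le \mathrm{RBL}_g(\mathbf{B},\mathbf{p})$ is immediate.

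For the reverse direction I would use Barthe's transport argument. Assume the Gaussian infimum is attained at $\mathbf{A}^\star$ (otherwise approximate). Its first-order optimality conditions, after rescaling $H_i$ by $(A_i^\star)^{1/2}$ and $\R^n$ by $(\sum_i p_i B_i^* A_i^\star B_i)^{1/2}$, assert that the rescaled $\mathbf{B}$ together with $\mathbf{p}$ is a Geometric Brascamp-Lieb datum in the sense of \eqref{highdimcond0}. Given non-negative $f_i\in L_1(H_i)$ with $\int f_i>0$, normalize and let $T_i=\nabla\phi_i:H_i\to H_i$ be the Brenier transport map (with $\phi_i$ convex) pushing the appropriate centred Gaussian $g_i$ forward onto $f_i$. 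Define $\Theta:\R^n\to\R^n$ in the rescaled variables by $\Theta(y)=\sum_i p_iB_i^*T_i(B_iy)$. The convexity of each $\phi_i$ makes $D\Theta(y)=\sum_i p_iB_i^*DT_i(B_iy)B_i$ pointwise positive definite, and the Minkowski-type matrix inequality
\[
\det\Bigl(\sum_i p_iB_i^* M_i B_i\Bigr)\ \ge\ \prod_i(\det M_i)^{p_i},
\]
valid for symmetric positive definite $M_i$ when $\sum_i p_iB_i^*B_i=I$, gives an a.e.\ lower bound on $\det D\Theta(y)$. The change of variables $x=\Theta(y)$ in the outer integral on the left of \eqref{RBLgeneraleq} (justified by the monotonicity of $\Theta$ as a sum of gradients of convex potentials), combined with the Monge--Amp\`ere identity $f_i(T_i(z))\det DT_i(z)=g_i(z)$, then yields the stated inequality with constant $\mathrm{RBL}_g(\mathbf{B},\mathbf{p})$.

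The main obstacle is the transport step: establishing the Jacobian inequality almost everywhere using only the Aleksandrov second derivatives of the convex potentials $\phi_i$, justifying the change of variables inside the outer integral (which requires controlling the image of $\Theta$), and, most importantly, handling the case when the Gaussian infimum is not attained or equals zero. In the non-attainment case I would argue by approximation through a sequence $\mathbf{A}^{(m)}$ approaching the infimum, relying on the hypothesis $\bigcap_i\ker B_i=\{0\}$ to keep $\sum_i p_iB_i^*A_i^{(m)}B_i$ positive definite along the sequence and the transport construction non-degenerate; the degenerate case $\mathrm{RBL}_g=0$ is vacuous. Once these technicalities are addressed, the matching of the two directions at Gaussian inputs gives $\mathrm{RBL}(\mathbf{B},\mathbf{p})=\mathrm{RBL}_g(\mathbf{B},\mathbf{p})$, which is precisely the claim that the optimal constant is determined by centred Gaussians.
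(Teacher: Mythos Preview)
The paper does not prove Theorem~\ref{RBLgeneral}; it is stated as a known result and attributed to Barthe \cite{Bar97,Bar98}. Your outline is essentially Barthe's optimal-transport strategy, and the core of it is sound: when the Gaussian infimum is attained at some $\mathbf{A}^\star$, your rescaling of $H_i$ by $(A_i^\star)^{1/2}$ and of $\R^n$ by $(\sum_i p_iB_i^*A_i^\star B_i)^{1/2}$ does place the datum in geometric position (this is exactly the content of the Bennett--Carbery--Christ--Tao characterization), and then the Brenier maps together with the Ball--Barthe determinant inequality reduce everything to the Geometric Barthe inequality Theorem~\ref{RBLtheo}.

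The genuine gap is your handling of the non-attainment case. Approximating through a near-infimizing sequence $\mathbf{A}^{(m)}$ does \emph{not} work as you describe: rescaling by a non-optimal $\mathbf{A}^{(m)}$ yields a datum $\tilde{\mathbf B}$ satisfying isotropy $\sum_i p_i\tilde B_i^*\tilde B_i=I_n$ but \emph{not} the projection condition $\tilde B_i\tilde B_i^*=I_{n_i}$, and your ``Minkowski-type'' determinant inequality
\[
\det\Bigl(\sum_i p_i\tilde B_i^*M_i\tilde B_i\Bigr)\ge\prod_i(\det M_i)^{p_i}
\]
is in general \emph{false} under isotropy alone. Indeed, this inequality for all positive definite $M_i$ is equivalent to ${\rm BL}(\tilde{\mathbf B},\mathbf p)\le 1$, whereas isotropy by itself only forces ${\rm BL}(\tilde{\mathbf B},\mathbf p)\ge 1$ (cf.\ Proposition~\ref{ProjIsoBLBp}); the two coincide precisely when the datum is geometric, i.e.\ precisely when $\mathbf{A}^{(m)}$ is an actual optimizer. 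So the transport step does not deliver the constant $D(\mathbf A^{(m)})$ for non-optimal $\mathbf A^{(m)}$, and your limiting argument collapses. A correct approximation perturbs the \emph{datum} rather than the Gaussian input---for instance, moving $\mathbf p$ into the relative interior of the Brascamp--Lieb polytope (Theorem~\ref{Brascamp-Lieb-polytope}), where Gaussian extremizers are guaranteed, and then passing to the limit via a continuity argument for the constant.
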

\noindent{\bf Remark} The Geometric  Barthe's  Inequality is readily a special case of
\eqref{RBLgeneraleq}  where  ${\rm RBL}(\mathbf{B},\mathbf{p})=1$. We note that \eqref{RBLgeneraleq} is the Pr\'ekopa-Leindler inequality \eqref{PLineq} if $H_1=\ldots=H_k=\R^n$
 and each $B_i=I_n$, and hence ${\rm RBL}(\mathbf{B},\mathbf{p})=1$ and $\sum_{i=1}^kp_i=1$ in that case.

The condition $\sum_{i=1}^kp_in_i=n$ makes sure that for any $\lambda>0$, the inequality \eqref{RBLgeneraleq} is invariant under replacing $f_1(x_1),\ldots,f_k(x_k)$ by $f_1(\lambda x_1),\ldots,f_k(\lambda x_k)$, $x_i\in H_i$. \\

\begin{remark}[The relation between 
${\rm BL}(\mathbf{B},\mathbf{p})$ and ${\rm RBL}(\mathbf{B},\mathbf{p})$]
For a Brascamp-Lieb datum $\mathbf{B}=(B_1,\ldots,B_k)$ and $\mathbf{p}=(p_1,\ldots,p_k)$ as in 
Theorem~\ref{BLgeneral} and Theorem~\ref{RBLgeneral},
possibly ${\rm BL}(\mathbf{B},\mathbf{p})=\infty$ and  ${\rm RBL}(\mathbf{B},\mathbf{p})=0$ (see Section~\ref{secFiniteness} for the characterizastion when ${\rm BL}(\mathbf{B},\mathbf{p})$ and  ${\rm RBL}(\mathbf{B},\mathbf{p})$ are positive and finite). 

According to Barthe \cite{Bar98},  ${\rm BL}(\mathbf{B},\mathbf{p})<\infty$ if and only if  ${\rm RBL}(\mathbf{B},\mathbf{p})>0$, and in this case, we have
\begin{equation}
\label{BLRBL}
{\rm BL}(\mathbf{B},\mathbf{p})\cdot {\rm RBL}(\mathbf{B},\mathbf{p})=1.
\end{equation}
\end{remark}

Concerning extremals in Theorem~\ref{RBLgeneral},
Lehec \cite{Leh14} proved that if there exists some Gaussian extremizers for
Barthe's Inequality \eqref{RBLgeneraleq}, then the corresponding Brascamp-Lieb datum
 $\{(B_i,p_i)\}_{i=1,\ldots,k}$ is equivalent to some Geometric Brascamp-Lieb datum; therefore, the equality case of
\eqref{RBLgeneraleq} can be understood via Theorem~\ref{RBLtheoequa} in that case.

However, it is still not known whether having any extremizers in Barthe's  Inequality \eqref{RBLgeneraleq} yields the existence of Gaussian extremizers. One possible approach is to use iterated convolutions and
renormalizations as in Bennett, Carbery, Christ, Tao \cite{BCCT08} in the case of Brascamp-Lieb inequality.

The importance of the Brascamp-Lieb inequality is shown by the fact that besides harmonic analysis and convex geometry,
it has been also applied, for example, 
\begin{itemize}
\item in discrete geometry, like about a quantitative fractional Helly theorem by 
Brazitikos \cite{Bra14},
\item in combinatorics, like about exceptional sets by  
Gan \cite{Gan24},
\item in number theory, like the paper by Guo, Zhang \cite{GuZ19}, 
\item to get central limit theorems in probability, like the paper by Avram, Taqqu \cite{AvT06}.
\end{itemize}
We note the paper by Brazitikos \cite{Bra14} is especially interesting from the point of view that it does not simply consider the rank one Geometric Brascamp-Lieb inequality (cf. Theorem~\ref{BrascampLiebRankOne}) that is typically used for many inequalities in convex geometry, but an approximate version of it.

There are three main methods of proofs that work for proving both
the Brascamp-Lieb Inequality and its reverse form due to Barthe. The paper Barthe \cite{Bar98} used optimal transportation to prove
Barthe's Inequality (``the Reverse Brascamp-Lieb inequality") and reprove the Brascamp-Lieb Inequality simultaneously.
A heat equation argument was provided in the rank one case by
Carlen,  Lieb,  Loss \cite{CLL04} for the Brascamp-Lieb Inequality and by
Barthe, Cordero-Erausquin \cite{BaC04} for Barthe's  inequality. The general versions of both inequalities are proved via the heat equation approach by
Barthe, Huet \cite{BaH09}. Finally, simultaneous probabilistic arguments for the two inequalities are due to
Lehec \cite{Leh14}.
We note that
Chen,  Dafnis, Paouris \cite{CDP15}
and Courtade, Liu \cite{CoL21}, as well, deal systematically with finiteness conditions in Brascamp-Lieb and
 Barthe's  inequalities. 

Various versions of the Brascamp-Lieb inequality and its reverse form have been obtained by
Balogh, Kristaly \cite{BaK18}
Barthe \cite{Bar04}, Barthe, Cordero-Erausquin \cite{BaC04},
Barthe, Cordero-Erausquin,  Ledoux, Maurey \cite{BCLM11},
Barthe, Wolff \cite{BaW14,BaW22},
Bennett, Bez, Flock, Lee \cite{BBFL18},
 Bennett, Bez, Buschenhenke, Cowling, Flock \cite{BBBCF20}, Bennett, Tao \cite{BeT24},
Bobkov, Colesanti, Fragal\`a \cite{BCF14},
Bueno, Pivarov \cite{BuP21},
Chen,  Dafnis, Paouris \cite{CDP15},  Courtade, Liu \cite{CoL21},
Duncan \cite{Dun21}, Ghilli, Salani \cite{GhS17},
Kolesnikov, Milman \cite{KoM22},  Livshyts \cite{Liv21},
Lutwak, Yang, Zhang \cite{LYZ04,LYZ07},
Maldague \cite{Mal},  Marsiglietti \cite{Mar17},  Nakamura, Tsuji \cite{NaT},
Rossi, Salani \cite{RoS17,RoS19}.

\section{The Reverse Isoperimetric Inequality and the rank one Geometric Brascamp-Lieb inequality}

 For a compact convex set $K\subset\R^n$ with
${\rm dim}\,{\rm aff}\,K=m$, we write $|K|$ to denote the $m$-dimensional Lebesgue measure of $K$, and $S(K)$ to denote the surface area of $K$ in terms of the $(n-1)$-dimensional Hausdorff measure. In addition, let $B^n=\{x\in\R^n:\,\|x\|\leq 1\}$  be the Euclidean unit ball.\\

\noindent{\bf Remark.} For the box 
$X_\varepsilon=[-\varepsilon^{-(n-1)},\varepsilon^{-(n-1)}]\times [-\varepsilon,\varepsilon]^{n-1}$, we have
$|X_\varepsilon|=2^n$ but $S(X_\varepsilon)>1/\varepsilon$ (the area of a "long" facet); therefore,
the isoperimetric quotient $S(X_\varepsilon)^n/|X_\varepsilon|^{n-1}$ can be arbitrary large in general.
The "Reverse isoperimetric inequality" says that each convex body has a linear image whose isoperimetric quotient is at most as bad as of a regular simplex, and hence  "simplices have the worst isoperimetric quotient" up to linear transforms
(cf. Theorem~\ref{inverse-iso-simplex}). For origin symmetric convex bodies, "cubes have the worst isoperimetric quotient" up to linear transforms (cf. Theorem~\ref{inverse-iso-cube}).

Let $\Delta^n$ denote the regular simplex circumscribed  around $B^n$, and hence each facet touches $B^n$.

\begin{theo}[Reverse Isoperimetric Inequality, Keith Ball \cite{Bal91}]
\label{inverse-iso-simplex}
For any convex body $K$ in $\R^n$, there exists $\Phi\in {\rm GL}(n)$ such that
$$
\frac{S(\Phi K)^n}{|\Phi K|^{n-1}}\leq \frac{S(\Delta^n)^n}{|\Delta^n|^{n-1}}
=\frac{n^{3n/2}(n+1)^{(n+1)/2}}{n!},
$$ 
where strict inequality can be attained if and only if $K$ is not a simplex.
\end{theo}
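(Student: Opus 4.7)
The plan has two stages: first reduce the inequality to a volume bound for $K$ in John position, and then prove this bound via a lifted application of the rank-one Geometric Brascamp-Lieb inequality.

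Since the isoperimetric quotient $S(\cdot)^n/|\cdot|^{n-1}$ is translation invariant, I would start by choosing $\Phi\in\GL{(n)}$ so that, after translation, the maximum volume inscribed ellipsoid of $K'=\Phi K$ is $B^n$. Then $B^n\subset K'$ and Minkowski's formula give $S(K')\leq n|K'|$, with equality for $\Delta^n$ itself. Since $S(\Delta^n)^n/|\Delta^n|^{n-1}=n^n|\Delta^n|$, the entire theorem reduces to the volume bound $|K'|\leq|\Delta^n|$.

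For this bound, John's theorem supplies contact vectors $u_1,\dots,u_k\in S^{n-1}\cap\partial K'$ and weights $c_1,\dots,c_k>0$ with $\sum_i c_iu_iu_i^\top=I_n$ and $\sum_i c_iu_i=0$. The tangent hyperplane of $B^n$ at each $u_i$ also supports $K'$, so $K'\subset C:=\bigcap_i\{x:\langle x,u_i\rangle\leq1\}$, a bounded polytope since the origin lies in the interior of $\Conv{\{u_i\}}$. The key trick is to lift the rank-one datum to $\R^{n+1}$:
$$\tilde u_i=\sqrt{\tfrac{n}{n+1}}\Bigl(u_i,-\tfrac{1}{\sqrt{n}}\Bigr)\in S^n,\qquad \tilde c_i=\tfrac{n+1}{n}\,c_i,$$
which one easily checks satisfies $\sum_i\tilde c_i\tilde u_i\tilde u_i^\top=I_{n+1}$ and $\sum_i\tilde c_i\tilde u_i=(0,-\sqrt{n+1})$, hence forms a Geometric Brascamp-Lieb datum. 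I would then apply Theorem~\ref{BLtheo} to the integrable exponentials $\tilde f_i(s)=e^s\mathbf{1}_{(-\infty,0]}(s)$ (each of integral $1$). Writing $\bar x=(x,t)$ and pulling out the linear exponent via $\sum_i\tilde c_i\tilde u_i$, the integrand collapses to $e^{-\sqrt{n+1}\,t}\mathbf{1}_{\{\langle x,u_i\rangle\leq t/\sqrt{n}\,\forall i\}}$. This indicator is empty for $t\leq 0$ (the constraints would contradict $\sum_i c_i\langle x,u_i\rangle=0$) and cuts out the scaled copy $(t/\sqrt{n})C$ of volume $(t/\sqrt{n})^n|C|$ for $t>0$. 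A single Gamma-function identity on the resulting $t$-integral converts the Brascamp-Lieb bound into precisely $|C|\cdot n!/(n^{n/2}(n+1)^{(n+1)/2})\leq 1$, i.e.\ $|C|\leq|\Delta^n|$.

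The main obstacle will be the equality characterization. Equality throughout $|K'|\leq|C|\leq|\Delta^n|$ would force equality in Brascamp-Lieb with the non-Gaussian exponentials $\tilde f_i$, which by Valdimarsson's Theorem~\ref{BLtheoequa} is only possible when each $\lin(\tilde u_i)$ is an independent subspace. For distinct rank-one lines this forces $k=n+1$ and the $\tilde u_i$ to form an orthonormal basis of $\R^{n+1}$; unwinding the substitution yields $\langle u_i,u_j\rangle=-1/n$ for $i\neq j$, exactly the face-normal configuration of the regular simplex, so $K'=C=\Delta^n$ and $K$ is affinely equivalent to $\Delta^n$. Packaging this extremal analysis cleanly is the most delicate point: it requires invoking the full strength of Valdimarsson's equality characterization and verifying that exponentials are genuinely suboptimal for every non-orthonormal Brascamp-Lieb datum arising this way.
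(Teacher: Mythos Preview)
Your proposal is correct and follows essentially the same route as the paper: reduce to the volume-ratio bound via Lemma~\ref{ballinbody}, then prove $|K'|\leq|\Delta^n|$ by lifting the John datum to $\R^{n+1}$ and applying the rank-one Geometric Brascamp--Lieb inequality to one-sided exponentials (your $\tilde u_i$ and $\tilde f_i$ differ from the paper's only by an overall sign). For the equality case the paper invokes Barthe's simpler rank-one characterization Theorem~\ref{BLequa0} rather than the full Valdimarsson theorem, which yields the orthonormal-basis conclusion for the $\tilde u_i$ directly and sidesteps the unpacking you flag as delicate.
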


We note that a {\it parallelepiped}\index{parallelepiped} is the linear image of a cube, and consider
the centered cube
$W^n=[-1,1]^n$  of edge length $2$.

\begin{theo}[Reverse Isoperimetric Inequality in the $o$-symmetric case, Keith Ball \cite{Bal89}]
\label{inverse-iso-cube}
For any $o$-symmetric convex body $K$ in $\R^n$, there exists $\Phi\in {\rm GL}(n)$ such that
$$
\frac{S(\Phi K)^n}{|\Phi K|^{n-1}}\leq \frac{S(W^n)^n}{|W^n|^{n-1}}=2^nn^n,
$$ 
where strict inequality can be attained if and only if $K$ is not a parallelepiped.
\end{theo}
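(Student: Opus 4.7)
My plan is to control the isoperimetric quotient by two separate bounds: a volume estimate $|K|\leq 2^n$ coming from the rank-one Geometric Brascamp-Lieb inequality applied to indicator functions, and a surface-area estimate $S(K)\leq n|K|$ coming from the inclusion $B^n\subset K$. Multiplying these bounds will give exactly the cube's isoperimetric quotient.

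To set up, by John's theorem in its $o$-symmetric form I choose $\Phi\in\GL{(n)}$ so that $B^n$ is the ellipsoid of maximal volume inscribed in $\Phi K$, and from now on replace $K$ by $\Phi K$. Then there exist contact points $u_1,\ldots,u_k\in \sfe\cap\partial K$ and weights $c_1,\ldots,c_k>0$ with
\[
\sum_{i=1}^k c_i\,u_i\otimes u_i=I_n,
\]
so (taking traces) $\sum_i c_i=n$, and setting $E_i=\lin\{u_i\}$, $p_i=c_i$ produces a rank-one Geometric Brascamp-Lieb datum. Since each $u_i$ is a contact point and $K$ is $o$-symmetric, the hyperplanes $\{\langle x,u_i\rangle=\pm 1\}$ support $K$, and hence $K\subset\{x\in\R^n:|\langle x,u_i\rangle|\leq 1\}$ for every $i$. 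Applying Theorem~\ref{BLtheo} to $f_i=\chi_{[-1,1]}$ then gives
\[
|K|\leq \int_{\R^n}\prod_{i=1}^k\chi_{[-1,1]}(\langle x,u_i\rangle)^{c_i}\,dx\leq \prod_{i=1}^k\Bigl(\int_\R\chi_{[-1,1]}\Bigr)^{c_i}=2^{\sum_i c_i}=2^n.
\]

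For the surface-area bound, $B^n\subset K$ and convexity of $K$ give $K+\e B^n\subset K+\e K=(1+\e)K$ for every $\e>0$, so
\[
S(K)=\lim_{\e\to 0^+}\frac{|K+\e B^n|-|K|}{\e}\leq \lim_{\e\to 0^+}\frac{((1+\e)^n-1)|K|}{\e}=n|K|.
\]
Combining the two estimates,
\[
\frac{S(K)^n}{|K|^{n-1}}\leq \frac{(n|K|)^n}{|K|^{n-1}}=n^n|K|\leq 2^n n^n=\frac{S(W^n)^n}{|W^n|^{n-1}},
\]
which is the claimed inequality.

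For the equality case, equality throughout forces $|K|=2^n$, i.e.\ equality in the Geometric Brascamp-Lieb inequality for the indicator test functions $\chi_{[-1,1]}$. Valdimarsson's Theorem~\ref{BLtheoequa} describes every extremizer as a product of a centered Gaussian on the component in the dependent subspace $F_{\rm dep}$ and arbitrary functions $h_j$ on the pairwise orthogonal independent subspaces; since an indicator is not a Gaussian, each one-dimensional $E_i=\lin\{u_i\}$ must coincide with one of the independent subspaces, and their pairwise orthogonality then forces the $u_i$ (after grouping parallel ones) to form an orthonormal basis of $\R^n$. Thus $K\subset\{x:|\langle x,u_i\rangle|\leq 1\}$, a cube of volume $2^n=|K|$, so $K$ equals the cube, and $\Phi^{-1}K$ is a parallelepiped. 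I expect this last step to be the main obstacle: extracting the parallelepiped structure from the equality case of Brascamp-Lieb for non-Gaussian (indicator) test functions requires carefully ruling out the dependent subspace via Valdimarsson's classification, rather than appealing to the cleaner Gaussian rigidity typically used when $f_i$ is smooth.
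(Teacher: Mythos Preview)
Your proof is correct and follows the same route as the paper: put $K$ in John position, bound $|K|\le 2^n$ via the rank-one Geometric Brascamp--Lieb inequality applied to $f_i=\chi_{[-1,1]}$, bound $S(K)\le n|K|$ from $B^n\subset K$, and multiply. The only substantive difference is in the equality analysis: the paper invokes Barthe's rank-one characterization (Theorem~\ref{BLequa0}), which directly hands you an orthonormal basis, whereas you go through Valdimarsson's general Theorem~\ref{BLtheoequa} and argue that the non-Gaussianity of $\chi_{[-1,1]}$ forces each $E_i$ to be an independent subspace; both work, but the paper's choice is more economical here since the tailored rank-one statement bypasses the $F_{\rm dep}$ discussion entirely.
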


We note that B\"or\"oczky, Hug \cite{BoH17b} and B\"or\"oczky, Fodor, Hug \cite{BFH19} prove stability versions 
Theorem~\ref{inverse-iso-simplex}
and Theorem~\ref{inverse-iso-cube}, respectively. 

To sketch the proof of the Reverse Isoperimetric Inequality Theorem~\ref{inverse-iso-simplex} and
Theorem~\ref{inverse-iso-cube} in order to show how it is connected to the Brascamp-Lieb inequality, we note that a polytope $P$ is circumscribed around $B^n$ if each facet of $P$ touches $B^n$. 

\begin{lemma}
\label{ballinbody}
If  $rB^n\subset K$ for a convex body $K$ in $\R^n$ and $r>0$, then $S(K)\leq \frac{n}r\,|K|$,
and equality holds if $K$ is a polytope circumscribed around $rB^n$.
\end{lemma}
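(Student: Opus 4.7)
The plan is to prove the inequality by a standard outer-parallel-body computation, using the inclusion $rB^n \subset K$ to bound $K + \varepsilon B^n$ by a dilate of $K$, and then verify equality for circumscribed polytopes directly by a cone decomposition from the origin.

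First I would recall that for any convex body $K$, the surface area can be written as the right derivative of volume along the Minkowski sum with $B^n$, namely
\begin{equation*}
S(K) = \lim_{\varepsilon \to 0^+} \frac{|K + \varepsilon B^n| - |K|}{\varepsilon}.
\end{equation*}
The key observation is then that, since $rB^n \subset K$, convexity of $K$ (and the fact that $0 \in K$) gives
\begin{equation*}
\varepsilon B^n = \tfrac{\varepsilon}{r}(rB^n) \subset \tfrac{\varepsilon}{r} K,
\end{equation*}
and hence
\begin{equation*}
K + \varepsilon B^n \subset K + \tfrac{\varepsilon}{r} K = \bigl(1 + \tfrac{\varepsilon}{r}\bigr) K,
\end{equation*}
the last equality using convexity of $K$. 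Taking volumes yields $|K + \varepsilon B^n| \leq (1 + \varepsilon/r)^n |K|$, so
\begin{equation*}
S(K) \leq \lim_{\varepsilon \to 0^+} \frac{(1 + \varepsilon/r)^n - 1}{\varepsilon}\, |K| = \frac{n}{r}\, |K|,
\end{equation*}
which is the asserted inequality.

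For the equality case, suppose $P$ is a polytope circumscribed around $rB^n$, with facets $F_1, \ldots, F_N$. Because each facet is tangent to $rB^n$, the distance from the origin to the hyperplane supporting $F_i$ is exactly $r$. Forming the cones $C_i = \Conv(\{0\} \cup F_i)$ gives a decomposition of $P$ (up to measure zero overlaps) into $N$ pyramids of height $r$ with bases $F_i$. Summing volumes,
\begin{equation*}
|P| = \sum_{i=1}^N \frac{r}{n}\, |F_i| = \frac{r}{n}\, S(P),
\end{equation*}
so the inequality is tight for such $P$. This completes both parts of the claim.

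I do not anticipate a serious obstacle: the only mildly delicate point is justifying the derivative formula for $S(K)$, but this is the Minkowski/Steiner formula for convex bodies and is entirely standard. If one prefers to avoid invoking it, the whole argument can be carried out by first proving equality for circumscribed polytopes via the cone decomposition and then approximating a general $K$ containing $rB^n$ from the outside by polytopes circumscribed around $rB^n$, using continuity of volume and surface area in the Hausdorff metric on the class of convex bodies containing a fixed ball.
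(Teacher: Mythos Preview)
Your proof is correct and follows essentially the same approach as the paper: both use the Minkowski derivative formula for $S(K)$, the inclusion $K+\varepsilon B^n\subset (1+\varepsilon/r)K$ coming from $rB^n\subset K$, and the cone decomposition over the facets for the equality case. Your write-up is slightly more detailed (you spell out $K+\tfrac{\varepsilon}{r}K=(1+\tfrac{\varepsilon}{r})K$ and offer an approximation alternative), but the argument is the same.
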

\begin{proof} The inequality $S(K)\leq \frac{n}r\,|K|$ follows from 
$$
S(K)=\lim_{\varrho\to 0^+}\frac{|K+\varrho\,B^n|-|K|}{\varrho}\leq
\lim_{\varrho\to 0^+}\frac{|K+\frac{\varrho}r\,K|-|K|}{\varrho}=
\frac{n}r\,|K|.
$$
If $K$ is a polytope circumscribed around $rB^n$, then considering the bounded "cones" with apex $o$ and of height $r$ over the facets shows that
$|K|=\frac{r}n\,S(P)$ in this case.
\end{proof}

The proof of the Reverse Isoperimetric inequality both in the $o$-symmetric and non-symmetric cases
 is based on the rank one  Geometric Brascamp-Lieb inequality Theorem~\ref{BrascampLiebRankOne}.

\begin{theo}[Brascamp-Lieb, Keith Ball] 
\label{BrascampLiebRankOne}
If $u_1,\ldots,u_k\in S^{n-1}$ and $p_1,\ldots,p_k>0$ satisfy
\begin{equation}
\label{BLJohn0}
\sum_{i=1}^kp_i u_i\otimes u_i={\rm I}_n,
\end{equation}
and $f_1,\ldots,f_k\in L^1(\R)$ are non-negative, then
\begin{equation}
\label{BL0}
\int_{\R^n}\prod_{i=1}^kf_i(\langle x,u_i\rangle)^{p_i}\,dx\leq
\prod_{i=1}^k\left(\int_{\R}f_i\right)^{p_i}.
\end{equation}
\end{theo}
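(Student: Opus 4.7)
The plan is to use a monotone transport argument in the spirit of Barthe. By homogeneity and mollification, reduce to the case where each $f_i$ is smooth and strictly positive with $\int_\R f_i = 1$. Let $g(t) = e^{-\pi t^2}$ and let $T_i : \R \to \R$ be the smooth strictly increasing transport sending $f_i(t)\,dt$ to $g(t)\,dt$, so that $f_i(t) = g(T_i(t))\, T_i'(t)$. Define the vector field
$$
\Theta(y) \;=\; \sum_{i=1}^k p_i\, T_i(\langle y, u_i\rangle)\, u_i, \qquad y \in \R^n,
$$
whose Jacobian $D\Theta(y) = \sum_{i=1}^k p_i T_i'(\langle y, u_i\rangle)\, u_i \otimes u_i$ is symmetric positive definite; hence $\Theta$ is the gradient of a strictly convex function, and in particular injective.

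The argument then rests on two pointwise inequalities that encode \eqref{BLJohn0}. First, the matrix AM--GM inequality of Ball,
$$
\det D\Theta(y) \;\geq\; \prod_{i=1}^k T_i'(\langle y, u_i\rangle)^{p_i},
$$
which I would establish by letting $U$ be the $n \times k$ matrix with columns $\sqrt{p_i}\,u_i$ (so $UU^{\top} = I_n$ by \eqref{BLJohn0}), expanding $\det(UDU^{\top})$ with $D = \mathrm{diag}\bigl(T_i'(\langle y,u_i\rangle)\bigr)$ via Cauchy--Binet, and applying Jensen to the resulting convex combination with weights $|\det U_S|^2$ over $n$-subsets $S$; these weights sum to $1$ (again by Cauchy--Binet) and satisfy $\sum_{S \ni i}|\det U_S|^2 = p_i$, an identity obtained by differentiating $\det\bigl(UU^{\top} + \varepsilon p_i u_i u_i^{\top}\bigr)$ at $\varepsilon = 0$. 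Second, the norm bound
$$
\|\Theta(y)\|^2 \;\leq\; \sum_{i=1}^k p_i\, T_i(\langle y, u_i\rangle)^2,
$$
which follows from writing $\Theta(y) = L \bigl(\sqrt{p_i}\,T_i(\langle y,u_i\rangle)\bigr)_{i=1}^{k}$ with $L : \R^k \to \R^n$ defined by $L e_i = \sqrt{p_i}\, u_i$: condition \eqref{BLJohn0} reads $L L^* = I_n$, so $L^* L$ is an orthogonal projection on $\R^k$ and $\|Lv\| \leq \|v\|$.

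Combining these with the transport identity yields the pointwise bound
$$
\prod_{i=1}^k f_i(\langle y,u_i\rangle)^{p_i} \;=\; e^{-\pi \sum_i p_i T_i(\langle y,u_i\rangle)^2}\prod_{i=1}^k T_i'(\langle y,u_i\rangle)^{p_i} \;\leq\; e^{-\pi\|\Theta(y)\|^2}\,\det D\Theta(y).
$$
Integrating over $\R^n$ and changing variables $x = \Theta(y)$ bounds the left-hand side by $\int_{\Theta(\R^n)} e^{-\pi\|x\|^2}\, dx \leq \int_{\R^n} e^{-\pi\|x\|^2}\, dx = 1$, which is \eqref{BL0} after unnormalizing.

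The main obstacle, and the only step genuinely using \eqref{BLJohn0} beyond its trace $\sum p_i = n$, is the matrix AM--GM bound for $\det D\Theta$; the combinatorial identity $\sum_{S \ni i}|\det U_S|^2 = p_i$ on the $n \times n$ minors of the tight-frame matrix $U$ is the crucial ingredient. Regularity of the $T_i$ so that $\Theta$ is $C^1$, and possible non-surjectivity of $\Theta$ (which only costs an inequality in the change of variables, in the correct direction), are routine technicalities handled by approximation.
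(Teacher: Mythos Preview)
Your argument is correct and complete: this is precisely Barthe's optimal transport proof of the rank-one Geometric Brascamp--Lieb inequality, with Ball's determinantal AM--GM lemma (the Cauchy--Binet argument you outline, including the identity $\sum_{S\ni i}|\det U_S|^2=p_i$) as the key structural input from the isotropy condition~\eqref{BLJohn0}.

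Note, however, that the paper does not supply its own proof of Theorem~\ref{BrascampLiebRankOne}; it is a survey, and the inequality is simply stated as a known result due to Brascamp--Lieb and Ball. The paper does remark in Section~\ref{secIntro} that there are three main proof techniques---Barthe's optimal transportation \cite{Bar98}, the heat-equation method of Carlen--Lieb--Loss \cite{CLL04} and Barthe--Cordero-Erausquin \cite{BaC04}, and Lehec's probabilistic argument \cite{Leh14}---and your proposal is exactly the first of these. So there is no ``paper's own proof'' to compare against; what you have written is the canonical transport argument, and it would serve perfectly well as the missing proof.
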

\noindent{\bf Remarks.}  
\begin{description}
\item[(i)] If $n=1$, then the  Brascamp-Lieb inequality (\ref{BL0}) is the H\"older inequality.

\item[(ii)] Inequality (\ref{BL0}) is optimal, and we provide two types of examples for equality:
\begin{itemize}
\item
If $u_1,\ldots,u_k\in S^{n-1}$ and $p_1,\ldots,p_k>0$ satisfy (\ref{BLJohn0}), and
$f_i(t)=e^{-\pi t^2}$  for $i=1,\ldots,k$, then each
$\int_{\R}f_i=1$, and
$$
\int_{\R^n}\prod_{i=1}^kf_i(\langle x,u_i\rangle)^{p_i}\,dx=
\int_{\R^n}e^{-\pi\sum_{i=1}^kp_i\langle x,u_i\rangle^2}\,dx=
\int_{\R^n}e^{-\pi\langle x,x\rangle^2}\,dx=1.
$$
\item If $u_1,\ldots,u_n$ is an orthonormal basis, $k=n$ and  $p_1=\ldots=p_n=1$, and hence (\ref{BLJohn0}) holds,
and $f_1,\ldots,f_n\in L^1(\R)$ any functions, then the Fubini Theorem yields 
$$ 
\int_{\R^n}\prod_{i=1}^nf_i(\langle x,u_i\rangle)^{p_i}\,dx=
\prod_{i=1}^n\left(\int_{\R}f_i\right)^{p_i}.
$$
\end{itemize}
\end{description}

More precisely, Theorem~\ref{BrascampLiebRankOne} is the so-called Geometric form of the
 rank one  Brascamp-Lieb inequality discovered by Keith Ball, which matches nicely the
form of John's theorem as in Theorem~\ref{Johnmaxvol} (see Keith Ball \cite{Bal92} or Gruber, Schuster \cite{GrS05} for the if and only if statement).

\begin{theo}[John] 
\label{Johnmaxvol}
For any convex $K\subset\R^n$, there exists a unique ellipsoid of maximal volume - the so-called John ellipsoid - contained in $K$.

Assuming that $B^n\subset K$, $B^n$ is the John ellipsoid of $K$ if and only if
   there exist
$u_1,\ldots,u_k\in S^{n-1}\cap \partial K$ and 
$p_1,\ldots,p_k>0$, $k\leq n(n+1)$, such that
\begin{align}
\label{John1}
\sum_{i=1}^kp_i u_i\otimes u_i&={\rm I}_n,\\
\label{John2}
\sum_{i=1}^kp_i u_i&=o
\end{align}
where ${\rm I}_n$ denotes the $n\times n$ identity matrix.

If $K$ is origin symmetric ($K=-K$), then we may assume that $k=2\ell$ for an integer $\ell\geq n$, and $p_{i+\ell}=p_i$ and $u_{i+\ell}=-u_i$ for $i\in\{1,\ldots,\ell\}$, and hence \eqref{John2} can be dropped.
\end{theo}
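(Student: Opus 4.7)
The plan is to establish the theorem in three steps: existence and uniqueness of the maximal volume ellipsoid, necessity of John's conditions, and their sufficiency. For existence and uniqueness I parametrize ellipsoids in $K$ by pairs $(T,a)$ with $T$ symmetric positive semidefinite. The feasible set $\{(T,a) : TB^n + a \subset K\}$ is bounded, closed, and convex: if $T_j B^n + a_j \subset K$ for $j = 1, 2$, then by convexity of $K$ the Minkowski average $\tfrac{T_1+T_2}{2} B^n + \tfrac{a_1+a_2}{2}$ also lies in $K$. Since the volume $\det(T)|B^n|$ is continuous, a maximum exists; strict concavity of $T \mapsto \det(T)^{1/n}$ on positive definite matrices (Minkowski's determinant inequality) forces uniqueness of $T$ at the maximum. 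By affine change of coordinates I assume the John ellipsoid is $B^n$; uniqueness of the center $a$ will follow a posteriori from the contact points spanning $\R^n$ (a consequence of $\sum p_i u_i \otimes u_i = I_n$).

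For necessity, let $\mathcal{C} = S^{n-1} \cap \partial K$ be the (compact) set of contact points, and let $\mathcal{K}$ denote the closed convex cone in the $\tfrac{n(n+3)}{2}$-dimensional space $\operatorname{Sym}(n) \oplus \R^n$ generated by $\{(u \otimes u,\, u) : u \in \mathcal{C}\}$. I show $(I_n, o) \in \mathcal{K}$; Carath\'eodory's theorem for convex cones then produces a nonnegative combination of at most $\tfrac{n(n+3)}{2} \leq n(n+1)$ generators, giving the desired decomposition. Suppose for contradiction $(I_n, o) \notin \mathcal{K}$. Hahn-Banach separation yields $(H, z) \in \operatorname{Sym}(n) \oplus \R^n$ with $\operatorname{tr}(H) > 0$ and $\langle Hu, u\rangle + \langle z, u\rangle \leq 0$ on $\mathcal{C}$; replacing $H$ by $H - \delta I_n$ for small $\delta > 0$ preserves $\operatorname{tr}(H) > 0$ while upgrading the latter to a uniform strict bound $\leq -\delta$ on $\mathcal{C}$. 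The perturbed ellipsoid $E_\varepsilon = (I_n + \varepsilon H) B^n + \varepsilon z$ has support function $h_{E_\varepsilon}(v) = \|(I_n + \varepsilon H) v\| + \varepsilon \langle z, v\rangle = 1 + \varepsilon(\langle Hv, v\rangle + \langle z, v\rangle) + O(\varepsilon^2)$. Splitting $S^{n-1}$ into a small neighborhood of $\mathcal{C}$ (where the first-order term is $\leq -\delta/2$ and dominates the $O(\varepsilon^2)$ error) and its complement (where $h_K > 1 + \eta$ for some $\eta > 0$ by compactness) gives $h_{E_\varepsilon} \leq h_K$ pointwise for small $\varepsilon$, hence $E_\varepsilon \subset K$ with $|E_\varepsilon| = (1 + \varepsilon\operatorname{tr}(H) + O(\varepsilon^2))|B^n| > |B^n|$, contradicting maximality.

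For sufficiency, given a John decomposition I show any ellipsoid $E = T B^n + a \subset K$ with $T$ positive definite has $\det T \leq 1$. Since $B^n \subset K$ and $u_i \in S^{n-1} \cap \partial K$, the hyperplane $\{\langle x, u_i\rangle = 1\}$ supports $K$ at $u_i$, so $E \subset \{x : \langle x, u_i\rangle \leq 1\}$, giving $\|Tu_i\| \leq 1 - \langle a, u_i\rangle$. Taking logarithms, multiplying by $p_i$, summing, and applying Jensen's inequality to the concave $\log$ with $\sum p_i = \operatorname{tr}(I_n) = n$ and $\sum p_i \langle a, u_i\rangle = 0$ yields $\prod_i \|Tu_i\|^{p_i} \leq 1$. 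On the other hand, diagonalizing $T = U\operatorname{diag}(\sigma_1, \ldots, \sigma_n)U^T$ and applying Jensen's inequality to $\log$ with weights $\langle u_i, Ue_j\rangle^2$ (summing to $\|u_i\|^2 = 1$) on the values $\sigma_j^2$ gives $\sum_j \langle u_i, Ue_j\rangle^2 \log \sigma_j \leq \log \|Tu_i\|$; multiplying by $p_i$, summing over $i$, and using $\sum_i p_i \langle u_i, Ue_j\rangle^2 = 1$ from the John matrix identity collapses the left-hand side to $\log \det T$, so $\log \det T \leq \sum_i p_i \log \|Tu_i\| \leq 0$. In the $o$-symmetric case, the antipodal map $u \mapsto -u$ stabilizes $\mathcal{C}$, and averaging any decomposition under this involution produces pairs $u_{i+\ell} = -u_i$ with $p_{i+\ell} = p_i$, rendering $\sum p_i u_i = o$ automatic. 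The principal obstacle is the perturbation analysis in the necessity step: the $O(\varepsilon^2)$ correction in $h_{E_\varepsilon}$ must be controlled uniformly over $v$ near $\mathcal{C}$, and the shift $H \mapsto H - \delta I_n$ that converts weak separation into a uniform strict first-order margin is the decisive ingredient that makes the perturbation argument conclusive.
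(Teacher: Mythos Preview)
The paper does not prove Theorem~\ref{Johnmaxvol}; it is stated as a classical result of John, with the ``if and only if'' characterization attributed to Ball \cite{Bal92} and Gruber--Schuster \cite{GrS05}, and is then used as a tool in the proofs of Theorems~\ref{volume-ration-cube} and~\ref{volume-ratio-simplex}. So there is no ``paper's own proof'' to compare against.

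Your argument is a correct, self-contained proof along standard lines (compactness plus Minkowski's determinant inequality for existence/uniqueness; Hahn--Banach separation in $\mathrm{Sym}(n)\oplus\R^n$ combined with an ellipsoid perturbation for necessity; a Jensen/AM--GM computation for sufficiency). Two small points are worth tightening. First, the closedness of the cone $\mathcal{K}$, needed for the separation step, should be justified: since every generator $(u\otimes u,u)$ satisfies $\operatorname{tr}(u\otimes u)=1$, the convex hull of the (compact) generating set avoids the origin, which is exactly what guarantees that the conical hull is closed. Second, your treatment of uniqueness of the center $a$ is deferred ``a posteriori'' but not carried out; it does follow cleanly from your sufficiency computation: equality $\det T=1$ forces all singular values equal to $1$ (Jensen equality), hence $T=I_n$, and then $\|Tu_i\|=1\leq 1-\langle a,u_i\rangle$ together with $\sum_i p_i\langle a,u_i\rangle=0$ gives $\langle a,u_i\rangle=0$ for all $i$, whence $a=o$ since the $u_i$ span $\R^n$. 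With these two clarifications the proof is complete; note also that your Carath\'eodory bound $k\leq \tfrac{n(n+3)}{2}$ is in fact sharper than the $n(n+1)$ stated in the theorem.
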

\noindent{\bf Remarks.} Assume that $B^n\subset K$ is the John ellipsoid of $K$ in Theorem~\ref{Johnmaxvol}.
\begin{itemize}
\item (\ref{John1})  yields that
$\langle x,y\rangle =\sum_{i=1}^kp_i\langle x,u_i\rangle\langle y,u_i\rangle$ for $x,y\in\R^n$, and hence the discrete measure $\mu$ on $S^{n-1}$ concentrated on $\{u_1,\ldots,u_k\}$
with $\mu(u_i)=p_i$ is called isotropic.

\item $\sum_{i=1}^k p_i=n$ follows by comparing traces in (\ref{John1}).

\item $\langle x,u_i\rangle\leq 1$ for $x\in K$ and $i=1,\ldots,k$ as $K$ and $B^n$ share the same supporting hyperplanes at 
$u_1,\ldots,u_k$.
\end{itemize}

Equality in Theorem~\ref{BrascampLiebRankOne} has been characterized by Barthe \cite{Bar98}. It is more involved; therefore, we only quote the special case that we need. 

\begin{theo}[Barthe]
\label{BLequa0}
Let  $\int_{\R}f_i>0$ for $i=1,\ldots,k$, such that none of the $f_i$s is Gaussian in Theorem~\ref{BrascampLiebRankOne},
and equality holds in (\ref{BL0}). Then there exists an orthonormal basis 
$e_1,\ldots,e_n$ of $\R^n$ such that $\{u_1,\ldots,u_k\}\subset\{\pm e_1,\ldots,\pm e_n\}$
and $\sum_{u_i\in\R e_p}p_i=1$ for each $e_p$,  and if  $u_i=-u_j$, then  
$f_i(t)=\lambda_{ij}f_j(-t)$ for $\lambda_{ij}>0$.
\end{theo}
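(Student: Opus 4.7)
I plan to deduce Theorem~\ref{BLequa0} from Valdimarsson's equality characterization (Theorem~\ref{BLtheoequa}) applied with $E_i=\R u_i$, by exploiting the one-dimensionality of each $E_i$.

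First I unpack the decomposition \eqref{independent-dependent0} in this rank-one setting. For any $\varepsilon\in J_0$, the subspace $F_{(\varepsilon)}=\cap_{i=1}^k E_i^{(\varepsilon(i))}$ is contained in each line $E_i=\R u_i$ with $\varepsilon(i)=0$, hence $F_{(\varepsilon)}$ is one-dimensional as soon as some $\varepsilon(i)=0$; and since the $u_i$ span $\R^n$ by \eqref{BLJohn0}, the case $\varepsilon\equiv 1$ forces $F_{(\varepsilon)}=\{0\}$. Thus every independent subspace is a line $F_p=\R e_p$, and from $F_p\subset E_i^{(\varepsilon(i))}$ I read off that for every index $i$ one has either $u_i\parallel e_p$ or $u_i\perp e_p$.

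Next I show $F_{\rm dep}=\{0\}$. By Theorem~\ref{BLtheoequa}, for every $i$ and $x\in E_i$,
\[
f_i(x)=\theta_i\,e^{-\langle AP_{F_{\rm dep}}x,\,P_{F_{\rm dep}}x-b\rangle}\prod_{F_j\subset E_i}h_j(P_{F_j}x).
\]
Put $v_i=P_{F_{\rm dep}}u_i$. If $v_i\neq 0$ then $\R u_i\not\subset F_{\rm dep}^\perp$, so no independent subspace $F_j$ (each lying in $F_{\rm dep}^\perp$) can be contained in the line $E_i$; the product is therefore empty, and $f_i(tu_i)=\theta_i e^{-\alpha t^2+\beta t}$ with $\alpha=\langle Av_i,v_i\rangle>0$ by positive-definiteness of $A$. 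This makes $f_i$ Gaussian, contradicting the hypothesis. Hence $v_i=0$ for every $i$; since the $u_i$ span $\R^n$, we conclude $F_{\rm dep}=\{0\}$.

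With $F_{\rm dep}=\{0\}$ the space decomposes as $\R^n=\oplus_p F_p$. For each $i$, since $u_i$ is parallel or perpendicular to every $e_p$ and $u_i=\sum_p\langle u_i,e_p\rangle e_p$ is a unit vector, exactly one summand is nonzero and $u_i=\pm e_p$ for a unique $p$; thus $e_1,\dots,e_n$ is an orthonormal basis with $\{u_1,\ldots,u_k\}\subset\{\pm e_1,\ldots,\pm e_n\}$. Reading the $(p,p)$-entry of \eqref{BLJohn0} yields $\sum_{u_i\in\R e_p}p_i=1$. When $u_i=-u_j$ the shared line is $F_p=\R e_p=E_i=E_j$; the Valdimarsson form gives $f_i(s)=\theta_i h_p(s\,e_p)$ from $\langle se_p,u_i\rangle=s$, and $f_j(-s)=\theta_j h_p(s\,e_p)$ from $\langle se_p,u_j\rangle=-s$, so $f_i(t)=\lambda_{ij}f_j(-t)$ with $\lambda_{ij}=\theta_i/\theta_j>0$. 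The main obstacle, and the only step with content beyond bookkeeping, is the dichotomy in the second paragraph: ruling out $P_{F_{\rm dep}}u_i\neq 0$ by observing that along the one-dimensional line $E_i$ no $h_j$-factor can even be present to absorb a nontrivial Gaussian exponent.
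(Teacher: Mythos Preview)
Your argument is correct. The paper itself does not prove Theorem~\ref{BLequa0}; it attributes the result to Barthe \cite{Bar98} and only quotes the special case needed for the volume-ratio applications. Barthe's original proof proceeds directly through the optimal-transport argument for the rank-one inequality, whereas you deduce the statement from Valdimarsson's later and more general Theorem~\ref{BLtheoequa}. Within the survey's framework your route is the more economical one: the key step---observing that if $P_{F_{\rm dep}}u_i\neq 0$ then no one-dimensional independent subspace can sit inside the line $E_i=\R u_i$, so the product of $h_j$'s is empty and the Valdimarsson form forces $f_i$ to be Gaussian---cleanly exploits the rank-one hypothesis. Two cosmetic points: write $f_i(t)$ rather than $f_i(tu_i)$ in the second paragraph, since your $f_i$ lives on $\R$; and note that Theorem~\ref{BLtheoequa} as stated assumes the $E_i$ are proper, so you are implicitly taking $n\geq 2$ (the case $n=1$ is H\"older and immediate).
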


It is a natural question how well an inscribed ellipsoid can approximate a convex body in terms of volume. This question was answered by Keith Ball \cite{Bal89,Bal91}, see Theorem~\ref{volume-ration-cube} for the origin symmetric case, and 
Theorem~\ref{volume-ratio-simplex} in general. 

\begin{theo}[Volume Ratio in the origin symmetric case, Keith Ball \cite{Bal89}]
\label{volume-ration-cube}
For any $o$-symmetric convex body $K$ in $\R^n$, the \index{volume ratio}maximal volume John ellipsoid $E\subset K$ satisfies
$$
\frac{|K|}{|E|}\leq \frac{|W^n|}{|B^n|}
=\frac{2^n}{\omega_n},
$$ 
where strict inequality is attained unless $K$ is a parallelepiped.
\end{theo}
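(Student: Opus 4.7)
The plan is to apply the rank one Geometric Brascamp-Lieb inequality (Theorem~\ref{BrascampLiebRankOne}) to the John decomposition of the identity furnished by the contact points of $K$ with its John ellipsoid, using indicator functions rather than Gaussians as the test functions. Since the ratio $|K|/|E|$ is $\GL(n)$-invariant, I would first apply a linear transformation so that $E=B^n\subseteq K$. By the $o$-symmetric form of Theorem~\ref{Johnmaxvol}, there exist contact points $u_1,\ldots,u_k\in S^{n-1}\cap\partial K$, occurring in antipodal pairs, and weights $p_1,\ldots,p_k>0$ with $\sum_{i=1}^k p_i u_i\otimes u_i=I_n$; taking traces yields $\sum_{i=1}^k p_i=n$. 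Since $K$ and $B^n$ share supporting hyperplanes at the $u_i$ and $K=-K$, we have $|\langle x,u_i\rangle|\leq 1$ for every $x\in K$ and every $i$.

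Next I would apply Theorem~\ref{BrascampLiebRankOne} with $f_i=\chi_{[-1,1]}$ for every $i$. The key observation is that $\chi_{[-1,1]}^{p_i}=\chi_{[-1,1]}$ for any $p_i>0$, so $\prod_{i=1}^k f_i(\langle x,u_i\rangle)^{p_i}$ is exactly the indicator of the symmetric slab body $\widetilde{K}:=\{x\in\R^n:|\langle x,u_i\rangle|\leq 1\text{ for all }i\}$, which contains $K$. Hence
\begin{equation*}
|K|\leq |\widetilde{K}|=\int_{\R^n}\prod_{i=1}^k f_i(\langle x,u_i\rangle)^{p_i}\,dx\leq \prod_{i=1}^k\left(\int_{\R}f_i\right)^{p_i}=\prod_{i=1}^k 2^{p_i}=2^{\sum_i p_i}=2^n,
\end{equation*}
and dividing by $|E|=|B^n|=\omega_n$ gives the desired bound $|K|/|E|\leq 2^n/\omega_n=|W^n|/|B^n|$.

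For the equality characterization, the ``if'' direction is immediate: $K=W^n$ has $B^n$ as its maximal volume inscribed ellipsoid and realizes the ratio $2^n/\omega_n$, and the statement is $\GL(n)$-invariant, so every parallelepiped does as well. For the converse, equality in the theorem forces both $|K|=|\widetilde{K}|$ and equality in the Brascamp-Lieb step. Since $\chi_{[-1,1]}$ is not Gaussian, Theorem~\ref{BLequa0} supplies an orthonormal basis $e_1,\ldots,e_n$ with $\{u_1,\ldots,u_k\}\subseteq\{\pm e_1,\ldots,\pm e_n\}$; this forces $\widetilde{K}=[-1,1]^n=W^n$, and then $|K|=|\widetilde{K}|$ together with $K\subseteq\widetilde{K}$ gives $K=W^n$. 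Undoing the initial linear transformation, $K$ is a parallelepiped. The main obstacle is not in the chain of inequalities itself, which is remarkably short, but rather in recognizing that the correct test functions to feed into Brascamp-Lieb are the indicators $\chi_{[-1,1]}$ rather than Gaussians; this is precisely what converts the analytic inequality into a sharp volume bound for $\widetilde{K}$, after which Barthe's equality characterization reduces the extremal analysis to bookkeeping.
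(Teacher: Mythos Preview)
Your proof is correct and follows essentially the same approach as the paper: reduce to $E=B^n$, invoke the symmetric John decomposition, feed the indicators $\mathbf{1}_{[-1,1]}$ into the rank one Geometric Brascamp--Lieb inequality to bound $|K|$ by $|\widetilde{K}|\le 2^n$, and use Barthe's equality characterization (Theorem~\ref{BLequa0}) to force the $u_i$ to lie in an orthonormal basis in the equality case. The paper's $P$ and your $\widetilde{K}$ coincide because the contact points come in antipodal pairs, so the two arguments are the same.
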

\begin{proof} We may assume after a linear transformation that $E=B^n$. According to John's Theorem~\ref{Johnmaxvol}, 
 there exists  a symmetric set
$u_1,\ldots,u_{2\ell}\in S^{n-1}\cap \partial K$ and 
$p_1,\ldots,p_{2\ell}>0$ with $u_{i+\ell}=-u_i$ and $p_{i+\ell}=p_i$, $i=1,\ldots,\ell$, such that
$$
\sum_{i=1}^{2\ell}p_i u_i\otimes u_i={\rm I}_n.
$$
For $i=1,\ldots,2\ell$, let $f_i=\mathbf{1}_{[-1,1]}$. Now $K\subset P$ for 
the polytope $P=\{x\in\R^n:\,\langle x,u_i\rangle\leq 1$, $i=1,\ldots,2\ell\}$ according to the Remarks after 
John's Theorem~\ref{Johnmaxvol} where
$\mathbf{1}_P(x)=\prod_{i=1}^{2\ell}f_i(\langle x,u_i\rangle)=\prod_{i=1}^{2\ell}f_i(\langle x,u_i\rangle)^{p_i}$. 
It follows from the Brascamp-Lieb inequality (\ref{BL0})
and $\sum_{i=1}^{2\ell}p_i=n$ that
$$
|K|\leq |P|=\int_{\R^n}\prod_{i=1}^{2\ell}f_i(\langle x,u_i\rangle)^{p_i}\,dx\leq 
\prod_{i=1}^{2\ell}\left(\int_{\R}f_i\right)^{p_i}=2^{\sum_{i=1}^{2\ell}p_i}=2^n=|W^n|.
$$
If $|K|=|W^n|$, then $|K|=|P|$, and Theorem~\ref{BLequa0}  yields that $\ell=n$ and $u_1,\ldots,u_n$ is an orthonormal basis of $\R^n$; therefore, $K$ is a cube.
\end{proof}

Concerning the volume ratio of general convex bodies, we only sketch the argument because it involves a somewhat technical calculation.

\begin{theo}[Volume Ratio, Keith Ball \cite{Bal91}]
\label{volume-ratio-simplex}
For any convex body $K$ in $\R^n$, \index{volume ratio}the maximal volume John ellipsoid $E\subset K$ satisfies
$$
\frac{|K|}{|E|}\leq \frac{|\Delta^n|}{|B^n|}
=\frac{n^{n/2}(n+1)^{(n+1)/2}}{n!\omega_n},
$$ 
where strict inequality is attained unless $K$ is a simplex.
\end{theo}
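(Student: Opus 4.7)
The plan mirrors the origin-symmetric proof of Theorem~\ref{volume-ration-cube}, but uses the full non-symmetric John decomposition (\ref{John1})--(\ref{John2}) and a more delicate family of test functions in the rank-one Brascamp-Lieb inequality (Theorem~\ref{BrascampLiebRankOne}). After a volume-preserving linear map, one may assume the John ellipsoid of $K$ equals $B^n$, so by Theorem~\ref{Johnmaxvol} there exist contact points $u_1,\ldots,u_k\in S^{n-1}\cap\partial K$ and weights $p_1,\ldots,p_k>0$ satisfying $\sum_i p_iu_i\otimes u_i=I_n$, $\sum_i p_iu_i=o$, and $\sum_i p_i=n$. The remarks following Theorem~\ref{Johnmaxvol} give the enclosure $K\subset P:=\{x\in\R^n:\langle x,u_i\rangle\le 1,\ i=1,\ldots,k\}$, so $|K|\le|P|$ reduces matters to estimating $|P|$.

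Unlike the symmetric case, $P$ is bounded by a hyperplane only on one side of each $u_i$, so the choice $f_i=\mathbf{1}_{[-1,1]}$ is unavailable. The centering condition $\sum_ip_iu_i=o$ is built in through an exponential factor: one tries test functions of the form
\[
f_i(t)=g(t)\,e^{\alpha t}\,\mathbf{1}_{(-\infty,1]}(t),
\]
for which \eqref{John2} makes the exponential drop out after taking the weighted product,
\[
\prod_{i=1}^k f_i(\langle x,u_i\rangle)^{p_i}=\mathbf{1}_P(x)\prod_{i=1}^k g(\langle x,u_i\rangle)^{p_i}.
\]
With the naive choice $g\equiv 1$, $\alpha=1$, inequality \eqref{BL0} already yields $|K|\le|P|\le e^n$, i.e.\ $|K|/|B^n|\le e^n/\omega_n$; this exceeds the target ratio $|\Delta^n|/|B^n|$. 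To recover the sharp simplex constant one optimizes $\alpha$ against a power-type profile $g$, so that the one-dimensional integrals $\int_\R f_i$ evaluate (via a Gamma-function identity) in a way that makes $\prod_i\bigl(\int_\R f_i\bigr)^{p_i}$ equal $|\Delta^n|=\frac{n^{n/2}(n+1)^{(n+1)/2}}{n!}$ exactly.

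The main obstacle is precisely this sharp calibration --- the ``somewhat technical calculation'' the author defers --- of selecting $g$ and $\alpha$ so that the Brascamp-Lieb bound lands on the regular-simplex constant rather than the looser $e^n$. Once the estimate is in hand, the equality analysis is short: Barthe's Theorem~\ref{BLequa0}, combined with the rigidity that the $u_i$ must be contact points of $\partial K$ with $B^n$ and the resulting saturation of the Brascamp-Lieb step, forces the $u_i$ to be the outward normals of a regular simplex, and hence $K=P$ is a simplex up to affine equivalence.
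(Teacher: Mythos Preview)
Your plan has a structural gap, not merely a deferred calculation. Staying in $\R^n$ with non-Gaussian test functions $f_i(t)=g(t)\,e^{\alpha t}\mathbf{1}_{(-\infty,1]}(t)$ cannot reach the sharp constant $|\Delta^n|$, regardless of how $g$ and $\alpha$ are tuned. The obstruction is the very equality criterion you invoke at the end: by Theorem~\ref{BLequa0}, equality in \eqref{BL0} for non-Gaussian $f_i$ forces $\{u_1,\ldots,u_k\}\subset\{\pm e_1,\ldots,\pm e_n\}$ for some orthonormal basis. When $K=\Delta^n$, the contact points are the $n+1$ facet normals of the regular simplex, with pairwise inner product $-1/n$, and they never lie in such a set. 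Hence for the simplex the Brascamp--Lieb step is \emph{strict}, so whatever uniform bound $|P|\le C$ your chain $|P|\le \int_P\prod_ig(\langle x,u_i\rangle)^{p_i}\,dx\le \prod_i\bigl(\int_\R f_i\bigr)^{p_i}=C$ produces must satisfy $C>|\Delta^n|$. (At $g\equiv 1$ the minimum over $\alpha>0$ of $\bigl(\int_{-\infty}^{1}e^{\alpha t}\,dt\bigr)^n=(e^\alpha/\alpha)^n$ is $e^n$, confirming the gap you observed; the point is that no profile $g$ closes it.) This is exactly why the cube argument works directly in $\R^n$---there the extremal contact set \emph{is} $\{\pm e_j\}$---while the simplex argument cannot.

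The paper supplies the missing idea: lift to $\R^{n+1}$. Identify $\R^n$ with $w^\bot$ for a fixed $w\in S^n$, set $\tilde u_i=-\sqrt{\tfrac{n}{n+1}}\,u_i+\sqrt{\tfrac{1}{n+1}}\,w$ and $\tilde c_i=\tfrac{n+1}{n}\,p_i$; then the two John conditions \eqref{John1} and \eqref{John2} together give the single isotropy $\sum_i\tilde c_i\,\tilde u_i\otimes\tilde u_i=I_{n+1}$, and one applies \eqref{BL0} in $\R^{n+1}$ with $f_i(t)=e^{-t}\mathbf{1}_{[0,\infty)}(t)$. A direct computation yields $\int_{\R^{n+1}}\prod_if_i(\langle x,\tilde u_i\rangle)^{\tilde c_i}\,dx=|P|/|\Delta^n|$, hence $|P|\le|\Delta^n|$. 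The reason the lift succeeds is that for $K=\Delta^n$ the $n+1$ lifted vectors $\tilde u_i$ \emph{are} an orthonormal basis of $\R^{n+1}$ (indeed $\langle\tilde u_i,\tilde u_j\rangle=\tfrac{n}{n+1}\langle u_i,u_j\rangle+\tfrac1{n+1}=0$ for $i\ne j$), so Theorem~\ref{BLequa0} now permits equality, the bound is sharp, and the equality analysis goes through exactly as you outlined.
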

\begin{proof}[Sketch of the proof of Theorem~\ref{volume-ratio-simplex}]
We may assume that $B^n$ is the John ellipsoid of $K$, and let $p_1,\ldots,p_k>0$ be the coefficients
and
$u_1,\ldots,u_k\in S^{n-1}\cap \partial K$ be the contact points satifying
\eqref{John1} and \eqref{John2} in John's Theorem~\ref{Johnmaxvol}; namely,
\begin{equation}
\label{John12VolumeRatio}
\sum_{i=1}^kp_i u_i\otimes u_i={\rm I}_n \mbox{ \ and \ }
\sum_{i=1}^kp_i u_i=o.
\end{equation}
Again, $K\subset P$ for 
the polytope $P=\{x\in\R^n:\,\langle x,u_i\rangle\leq 1$, $i=1,\ldots,k\}$ according to the Remarks after 
John's Theorem~\ref{Johnmaxvol}. 
The main idea is to lift $u_1,\ldots,u_k$ to $\R^{n+1}$, and employ the Brascamp-Lieb inequality in $\R^{n+1}$. In particular, $\R^n$ is identified with $w^\bot$ for a fixed
$w\in S^n\subset\R^{n+1}$, and let 
$\tilde{u}_i=-\sqrt{\frac{n}{n+1}}\cdot u_i+\sqrt{\frac{1}{n+1}}\cdot w$  and 
$\tilde{c}_i=\frac{n+1}{n}\cdot p_i$ for $i=1,\ldots,k$. Therefore,
$\sum_{i=1}^k\tilde{c}_i \tilde{u}_i\otimes \tilde{u}_i={\rm I}_{n+1}$ follows from 
\eqref{John12VolumeRatio}. For $i=1,\ldots,k$, we consider the probability density
$$
f_i(t)=\left\{
\begin{array}{rl}
e^{-t}&\mbox{if $t\geq 0$};\\
0&\mbox{if $t< 0$}
\end{array}
\right.
$$
on $\R$ where some not too complicated calculations show that
$$
\int_{\R^{n+1}}\prod_{i=1}^kf_i(\langle x,\tilde{u}_i\rangle)^{\tilde{c}_i}=\frac{|P|}{|\Delta^n|}.
$$
We conclude from the Brascamp-Lieb inequality \eqref{BL0} that $|K|\geq|P|\geq |\Delta^n|$.

If $|K|=|\Delta^n|$, then  $K=P$ and equality holds in the Brascamp-Lieb inequality. Therefore,
Theorem~\ref{BLequa0} provides an orthonormal basis $e_1,\ldots,e_{n+1}$ of
$\R^{n+1}$ such that $\{\tilde{u}_1,\ldots,\tilde{u}_k\}\subset\{\pm e_1,\ldots,\pm e_{n+1}\}$.
Since $\langle w,\tilde{u}_i\rangle=\sqrt{\frac{1}{n+1}}$ for $i=1,\ldots,k$, we conclude that
$k=n+1$ and $\tilde{u}_1,\ldots,\tilde{u}_{n+1}$ is an  an orthonormal basis of
$\R^{n+1}$, and hence $P$ is congruent to $\Delta^n$. 
\end{proof}

\begin{proof}[Proof of the Reverse Isoperimetric Inequality
Theorem~\ref{inverse-iso-simplex} and Theorem~\ref{inverse-iso-cube}:]
After applying an affine transformation, we may assume that the John ellipsoid of $K$ is $B^n$
both in Theorem~\ref{inverse-iso-simplex} and Theorem~\ref{inverse-iso-cube}.

For Theorem~\ref{inverse-iso-simplex},  Theorem~\ref{volume-ratio-simplex} yields that
$|K|\leq|\Delta^n|$, thus we deduce from
 Lemma~\ref{ballinbody} that 
$$
\frac{S(K)^n}{|K|^{n-1}}\leq \frac{n^n|K|^n}{|K|^{n-1}}=n^n|K|\leq n^n|\Delta^n|
=\frac{S(\Delta^n)^n}{|\Delta^n|^{n-1}}.
$$ 
If equality holds in Theorem~\ref{inverse-iso-simplex}, then the equality case of 
Theorem~\ref{volume-ratio-simplex} yields that $K$ is congruent to $\Delta^n$. 

For Theorem~\ref{inverse-iso-cube},  we use the same argument, only with 
Theorem~\ref{volume-ration-cube}  in place of Theorem~\ref{volume-ratio-simplex}.
\end{proof}

\section{The Loomis, Whitney inequality, the Bollobas-Thomason inequality and their dual forms}
\label{secBT}

In this section, we list some geometric inequalities that are direct consequences of the Geometric Brascamp-Lieb inequality \eqref{BL} and Barthe's 
 Geometric Reverse Brascamp-Lieb inequality \eqref{RBL}. 

We write $e_1,\ldots,e_n$ to denote an orthonomal basis of $\R^n$. 
The starting point is the classical Loomis-Whitney inequality
\cite{LoW49} from 1949 which follows from the Geometric Brascamp-Lieb inequality \eqref{BL} provided that $k=n$, $p_1=\ldots=p_n=\frac1{n-1}$ and $f_i$ is the characteristic function of $P_{e_i^\bot}K$.

\begin{theo}[Loomis, Whitney]
\label{Loomis-Whitney}
If $K\subset \R^n$ is compact and affinely spans $\R^n$, then
\begin{equation}
\label{Loomis-Whitney-ineq}
|K|^{n-1}\leq \prod_{i=1}^n|P_{e_i^\bot}K|,
\end{equation}
with equality if and only if
$K=\oplus_{i=1}^nK_i$ where ${\rm aff}K_i$ is a line parallel to $e_i$.
\end{theo}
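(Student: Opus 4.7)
The plan is to apply the Geometric Brascamp-Lieb inequality (Theorem~\ref{BLtheo}) with the subspaces $E_i=e_i^\bot$ and weights $p_i=\frac{1}{n-1}$ for $i=1,\ldots,n$. First I would verify the Geometric Brascamp-Lieb datum: since $P_{e_i^\bot}=I_n-e_i\otimes e_i$, one has $\sum_{i=1}^n P_{e_i^\bot}=nI_n-I_n=(n-1)I_n$, so $\sum_i p_iP_{E_i}=I_n$ as required by \eqref{highdimcond0}.

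For the inequality itself, take $f_i=\mathbf{1}_{P_{e_i^\bot}K}$ on $E_i$. For every $x\in K$ one has $P_{e_i^\bot}x\in P_{e_i^\bot}K$, so $\mathbf{1}_K(x)\leq \prod_{i=1}^n f_i(P_{e_i^\bot}x)^{p_i}$ pointwise. Integrating and applying \eqref{BL} yields
$$
|K|\leq \int_{\R^n}\prod_{i=1}^n f_i(P_{e_i^\bot}x)^{1/(n-1)}\,dx\leq \prod_{i=1}^n |P_{e_i^\bot}K|^{1/(n-1)},
$$
and raising to the $(n-1)$-th power gives \eqref{Loomis-Whitney-ineq}.

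For the equality case I would appeal to Valdimarsson's Theorem~\ref{BLtheoequa}. The key structural step is to compute the independent subspaces $F_{(\varepsilon)}=\bigcap_{i=1}^n E_i^{(\varepsilon(i))}$: a direct inspection shows that $F_{(\varepsilon)}\neq\{0\}$ exactly when $\varepsilon$ has a single coordinate equal to $1$, yielding the $n$ pairwise orthogonal lines $F_j=\R e_j$, with $F_{\rm dep}=\{0\}$. With no Gaussian part present, Theorem~\ref{BLtheoequa} forces each $f_i$ to factor almost everywhere as $\theta_i\prod_{j\neq i}h_j(\langle\,\cdot\,,e_j\rangle)$. Since each $f_i$ is an indicator, one can take $h_j=\mathbf{1}_{I_j}$ for some measurable $I_j\subset\R$, so that $P_{e_i^\bot}K$ coincides up to a null set with the coordinate box $\prod_{j\neq i}I_j$. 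Combined with the pointwise equality $\mathbf{1}_K(x)=\prod_i f_i(P_{e_i^\bot}x)^{p_i}$ (which forces $K$ to equal the set of $x$ whose projections lie in every $P_{e_i^\bot}K$), I conclude $K=\prod_{j=1}^n I_j$ up to a null set; a short closure argument using compactness of $K$ then yields the desired decomposition $K=\oplus_{j=1}^nK_j$ with each $\mathrm{aff}\,K_j$ a line parallel to $e_j$.

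The main obstacle is the equality analysis: while the forward inequality is immediate once the datum is identified, the reverse direction requires combining two equality conditions (the pointwise factorization of $\mathbf{1}_K$ and the equality in \eqref{BL} applied to the projections), and passing from the almost-everywhere product structure back to the structural statement about the compact set $K$ needs some care because one works with indicator functions rather than strictly positive integrable functions.
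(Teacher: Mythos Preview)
Your proposal is correct and follows exactly the approach the paper indicates: the paper's entire argument for the inequality is the single sentence preceding the theorem, specifying $E_i=e_i^\bot$, $p_i=\frac{1}{n-1}$ and $f_i=\mathbf{1}_{P_{e_i^\bot}K}$, which you reproduce and verify in detail. Your equality analysis via Valdimarsson's Theorem~\ref{BLtheoequa}---computing the independent subspaces $F_j=\R e_j$ with $F_{\rm dep}=\{0\}$ and deducing the product structure---is precisely the route the paper points to when it attributes the equality case of the more general Bollob\'as--Thomason inequality (Theorem~\ref{Bollobas-Thomason-eq}) to Valdimarsson, and your caveat about the a.e.-to-exact passage for merely compact $K$ is well placed.
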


Meyer \cite{Mey88} provided a dual form of the Loomis-Whitney inequality where equality holds for affine crosspolytopes
which follows from Barthe's Geometric Brascamp-Lieb inequality \eqref{RBL} provided that $k=n$, $p_1=\ldots=p_n=\frac1{n-1}$ and $f_i$ is the characteristic function of $e_i^\bot\cap K$.

\begin{theo}[Meyer]
\label{dual-Loomis-Whitney}
If $K\subset \R^n$ is compact convex with $o\in{\rm int}K$, then
\begin{equation}
\label{dual-Loomis-Whitney-ineq}
|K|^{n-1}\geq \frac{n!}{n^n}\prod_{i=1}^n|K\cap e_i^\bot|,
\end{equation}
with equality if and only if
$K={\rm conv}\{\pm\lambda_ie_i\}_{i=1}^n$ for $\lambda_i>0$, $i=1,\ldots,n$.
\end{theo}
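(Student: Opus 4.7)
My plan is to apply Barthe's Geometric Brascamp--Lieb inequality (Theorem~\ref{RBLtheo}) with the data indicated in the paper: $k=n$, $E_i=e_i^\bot$, $p_i=\tfrac{1}{n-1}$, and $f_i=\mathbf{1}_{A_i}$ where $A_i:=K\cap e_i^\bot$. First I would verify the Geometric Brascamp--Lieb condition \eqref{highdimcond0}: using $P_{e_i^\bot}=I_n-e_i\otimes e_i$, one has $\sum_{i=1}^n\tfrac{1}{n-1}P_{e_i^\bot}=\tfrac{1}{n-1}(nI_n-I_n)=I_n$, as required.

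Next I would interpret the two sides of \eqref{RBL} for these choices. Since each $f_i$ is an indicator, the product $\prod_i f_i(x_i)^{p_i}$ equals $1$ exactly when every $x_i\in A_i$; hence the supremum inside the integral is $\mathbf{1}_L(x)$, where
$$
L\ :=\ \sum_{i=1}^n\tfrac{1}{n-1}A_i
$$
is the weighted Minkowski sum of the coordinate sections. Combined with $\int_{E_i}f_i=|A_i|$, Barthe's inequality gives $|L|^{n-1}\ge\prod_{i=1}^n|A_i|$.

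The final step is to convert this Barthe bound into Meyer's inequality $|K|^{n-1}\ge\tfrac{n!}{n^n}\prod|A_i|$ by a geometric comparison with $K$. The crude convexity inclusion $L\subset\tfrac{n}{n-1}K$, which follows from $\sum\tfrac{1}{n-1}x_i=\tfrac{n}{n-1}\cdot\tfrac{1}{n}\sum x_i\in\tfrac{n}{n-1}K$, is too weak; I would instead decompose $K$ into its $2^n$ orthant pieces $K_\sigma=K\cap\R^n_\sigma$, note that each $K_\sigma$ contains the simplex spanned by $o$ and the axial extreme points of $K$ in $\R^n_\sigma$ (yielding $|K|\ge\tfrac{1}{n!}\prod_i L_i$ where $L_i$ is the total length of $K\cap\R e_i$), and combine this with the bipyramid bound $|K|\ge\tfrac{1}{n}L_i|A_i|$ for each $i$ (the bipyramid with base $A_i$ and apices on the $e_i$-axis). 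The equality case of Theorem~\ref{dual-Loomis-Whitney} (affine crosspolytopes) should then follow because each of these geometric estimates and Barthe's inequality become equalities precisely for the crosspolytope.

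The main obstacle is the sharp reconciliation of constants in this last step. Each individual ingredient is tight for the affine crosspolytope, yet Barthe itself is generically strict there: a direct support-function computation for the standard crosspolytope in $\R^3$ gives $|L|=4$ whereas $\prod|A_i|^{1/(n-1)}=2\sqrt 2$, so the constant $\tfrac{n!}{n^n}$ cannot come from Barthe alone but must emerge from its algebraic combination with the pyramid and bipyramid estimates above. Identifying the right combination, and then invoking the equality characterisation of Theorem~\ref{RBLtheoequa} for Barthe together with the degeneracy condition for the simplex-containment step, should finally pin down affine crosspolytopes as the unique extremisers.
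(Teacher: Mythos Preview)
Your diagnosis is correct, and the obstruction you found is genuine: with $f_i=\mathbf{1}_{A_i}$, Barthe's inequality \eqref{RBL} gives only $|L|^{n-1}\ge\prod_i|A_i|$ for $L=\tfrac{1}{n-1}\sum_iA_i$, and this is \emph{strict} for the crosspolytope, so no comparison between $|L|$ and $|K|$ can ever recover the constant $n!/n^n$. Your proposed repair cannot close the gap either, because every ingredient you list (the orthant--simplex bound $|K|\ge\tfrac{1}{n!}\prod_iL_i$, the bipyramid bound $|K|\ge\tfrac{1}{n}L_i|A_i|$, and Barthe's bound on $|L|$) is a \emph{lower} bound on $|K|$ or $|L|$; eliminating the auxiliary quantities $L_i$ between two inequalities that point the same way cannot produce the desired upper bound on $\prod_i|A_i|$.

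The paper's one-line hint about characteristic functions is imprecise; the argument that actually works (this is how Meyer and Liakopoulos proceed) applies Barthe with the \emph{log-concave} choices $f_i(y)=e^{-\|y\|_{A_i}}$, where $\|\cdot\|_{A_i}$ is the Minkowski gauge of $A_i=K\cap e_i^\bot$. Then $\int_{E_i}f_i=(n-1)!\,|A_i|$, and for any decomposition $x=\tfrac{1}{n-1}\sum_ix_i$ with $x_i\in E_i$ one has
\[
\sum_{i=1}^n\|x_i\|_{A_i}\ \ge\ \sum_{i=1}^n\|x_i\|_K\ \ge\ (n-1)\,\|x\|_K
\]
by $A_i\subset K$ and subadditivity of the gauge, so the Barthe integrand is at most $e^{-\|x\|_K}$ and the left side of \eqref{RBL} is at most $\int_{\R^n}e^{-\|x\|_K}\,dx=n!\,|K|$. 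Raising $n!\,|K|\ge\prod_i\big((n-1)!\,|A_i|\big)^{1/(n-1)}$ to the power $n-1$ gives exactly \eqref{dual-Loomis-Whitney-ineq}, with the factor $((n-1)!)^n/(n!)^{n-1}=n!/n^n$ appearing automatically. The equality case then comes from combining Theorem~\ref{RBLtheoequa} with equality in the pointwise estimate above, which forces $K={\rm conv}\{\pm\lambda_ie_i\}$.
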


We note that various Reverse and dual Loomis-Whitney type inequalities are proved by
Campi, Gardner, Gronchi \cite{CGG16}, Brazitikos {\it et al} \cite{BDG17,BGL18},
Alonso-Guti\'errez {\it et al} \cite{ABBC,AB}.

To consider a genarization of the Loomis-Whitney inequality and its dual form,
we set
$[n]:=\{1,\ldots,n\}$, and for a non-empty proper subset $\sigma\subset[n]$, we define
$E_\sigma={\rm lin}\{e_i\}_{i\in\sigma}$. For $s\geq 1$, we say that the not necessarily distinct proper
non-empty subsets $\sigma_1,\ldots,\sigma_k\subset[n]$ form an $s$-uniform cover of $[n]$ if each
$j\in[n]$ is contained in exactly $s$ of $\sigma_1,\ldots,\sigma_k$.

The Bollobas-Thomason inequality \cite{BoT95}  follows from the Geometric Brascamp-Lieb inequality \eqref{BL} provided that  $p_1=\ldots=p_k=\frac1{s}$ and $f_i$ is the characteristic function of $P_{E_{\sigma_i}}K$.

\begin{theo}[Bollobas, Thomason]
\label{Bollobas-Thomason}
If $K\subset \R^n$ is compact and affinely spans $\R^n$, and
$\sigma_1,\ldots,\sigma_k\subset[n]$ form an $s$-uniform cover of $[n]$ for $s\geq 1$, then
\begin{equation}
\label{Bollobas-Thomasson-ineq}
|K|^s\leq \prod_{i=1}^k|P_{E_{\sigma_i}}K|.
\end{equation}
\end{theo}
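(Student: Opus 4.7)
The plan is to apply the Geometric Brascamp-Lieb inequality (Theorem~\ref{BLtheo}) with the subspaces $E_{\sigma_1},\ldots,E_{\sigma_k}$ and weights $p_1=\ldots=p_k=\frac{1}{s}$, following the hint given just before the statement. The first step is to verify that this is actually a Geometric Brascamp-Lieb datum, that is, that $\sum_{i=1}^k\tfrac{1}{s}P_{E_{\sigma_i}}=I_n$. Since $P_{E_{\sigma_i}}e_j=e_j$ when $j\in\sigma_i$ and $P_{E_{\sigma_i}}e_j=0$ otherwise, the $s$-uniform cover condition yields $\sum_{i=1}^k P_{E_{\sigma_i}}e_j=s\,e_j$ for every $j\in[n]$, which is precisely what is needed.

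Next, I would take $f_i=\mathbf{1}_{P_{E_{\sigma_i}}K}$ on $E_{\sigma_i}$ for each $i$, so that $\int_{E_{\sigma_i}}f_i=|P_{E_{\sigma_i}}K|$. The pointwise observation is that if $x\in K$, then $P_{E_{\sigma_i}}x\in P_{E_{\sigma_i}}K$ for every $i$, and hence
$$
\mathbf{1}_K(x)\leq \prod_{i=1}^k f_i(P_{E_{\sigma_i}}x)^{1/s}
$$
for all $x\in\R^n$ (the right hand side equals $1$ on $K$). Integrating this pointwise bound over $\R^n$ and then applying the Geometric Brascamp-Lieb inequality \eqref{BL} gives
$$
|K|\leq \int_{\R^n}\prod_{i=1}^k f_i(P_{E_{\sigma_i}}x)^{1/s}\,dx\leq \prod_{i=1}^k\left(\int_{E_{\sigma_i}}f_i\right)^{1/s}=\prod_{i=1}^k|P_{E_{\sigma_i}}K|^{1/s}.
$$
Raising to the power $s$ produces \eqref{Bollobas-Thomasson-ineq}.

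There is no serious obstacle here: the entire content of the proof lies in recognizing that the $s$-uniform cover condition is exactly what makes $\{(E_{\sigma_i},\tfrac{1}{s})\}$ a Geometric Brascamp-Lieb datum, at which point the inequality reduces to plugging in characteristic functions of projections. The only minor care needed is that $K$ affinely spans $\R^n$, which ensures each $P_{E_{\sigma_i}}K$ has positive $|\sigma_i|$-dimensional measure so that all integrals are finite and the manipulation is meaningful; this is guaranteed by the hypothesis on $K$.
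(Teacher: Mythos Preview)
Your proof is correct and follows exactly the approach the paper indicates just before the statement: apply the Geometric Brascamp--Lieb inequality \eqref{BL} with $p_1=\ldots=p_k=\tfrac{1}{s}$ and $f_i=\mathbf{1}_{P_{E_{\sigma_i}}K}$. You have simply fleshed out the verification that the $s$-uniform cover condition gives $\sum_{i=1}^k\tfrac{1}{s}P_{E_{\sigma_i}}=I_n$ and the pointwise bound $\mathbf{1}_K\leq\prod_i f_i(P_{E_{\sigma_i}}\cdot)^{1/s}$, both of which the paper leaves implicit.
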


We note that the case when $k=n$, $s=n-1$, and hence when we may assume that $\sigma_i=[n]\backslash e_i$, is the
Loomis-Whitney inequality Therem~\ref{Loomis-Whitney}.

Liakopoulos \cite{Lia19} managed to prove a dual form of the Bollobas-Thomason inequality which
follows from Barthe's Geometric Brascamp-Lieb inequality \eqref{RBL} provided that $p_1=\ldots=p_n=\frac1{s}$ and $f_i$ is the characteristic function of $E_{\sigma_i}\cap K$.
For a finite set $\sigma$, we write $|\sigma|$ to denote its cardinality.

\begin{theo}[Liakopoulos]
\label{Liakopoulos}
If $K\subset \R^n$ is compact convex with $o\in{\rm int}K$, and
$\sigma_1,\ldots,\sigma_k\subset[n]$ form an $s$-uniform cover of $[n]$ for $s\geq 1$, then
\begin{equation}
\label{Liakopoulos-ineq}
|K|^s\geq \frac{\prod_{i=1}^k|\sigma_i|!}{(n!)^s}\cdot \prod_{i=1}^k|K\cap E_{\sigma_i}|.
\end{equation}
\end{theo}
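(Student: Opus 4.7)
The plan is to apply Barthe's Geometric Reverse Brascamp-Lieb inequality \eqref{RBL} with $E_i:=E_{\sigma_i}$, $p_i:=1/s$ for $i=1,\ldots,k$, and test functions $f_i\in L^1(E_{\sigma_i})$ given by
$$
f_i(x)=e^{-\|x\|_{K\cap E_{\sigma_i}}},
$$
where $\|\cdot\|_{K\cap E_{\sigma_i}}$ denotes the Minkowski functional of $K\cap E_{\sigma_i}$ inside $E_{\sigma_i}$ (well-defined and sublinear because $K\cap E_{\sigma_i}$ is a convex body in $E_{\sigma_i}$ containing $o$ in its interior). The decisive choice is to take these Laplace-type weights rather than the bare indicator $\mathbf{1}_{K\cap E_{\sigma_i}}$: the exponentials produce the combinatorial factors $|\sigma_i|!$ and $n!$ that appear in \eqref{Liakopoulos-ineq}, whereas indicators would yield only a volume-ratio bound.

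First I verify the hypothesis \eqref{highdimcond0}: since $P_{E_{\sigma_i}}e_j=e_j$ for $j\in\sigma_i$ and $P_{E_{\sigma_i}}e_j=0$ otherwise, the $s$-uniformity of the cover gives $\sum_{i=1}^k p_i P_{E_{\sigma_i}}e_j=\tfrac1{s}\cdot s\cdot e_j=e_j$ for every $j\in[n]$, so $\sum_i p_i P_{E_{\sigma_i}}=I_n$. Next, the layer-cake identity $\int_{\R^m}e^{-\|y\|_C}\,dy=m!\,|C|$, valid for any convex body $C\subset\R^m$ with $o\in\inte C$, applied in the $|\sigma_i|$-dimensional space $E_{\sigma_i}$, yields $\int_{E_{\sigma_i}}f_i=|\sigma_i|!\,|K\cap E_{\sigma_i}|$, so the right-hand side of \eqref{RBL} equals $\prod_{i=1}^k(|\sigma_i|!\,|K\cap E_{\sigma_i}|)^{1/s}$.

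The main step is to bound the left-hand side of \eqref{RBL} by $n!\,|K|$. Fix $x\in\R^n$ and any admissible decomposition $x=\frac1{s}\sum_{i=1}^k x_i$ with $x_i\in E_{\sigma_i}$. The sublinearity (positive homogeneity plus subadditivity) of $\|\cdot\|_K$, together with the inclusion $K\cap E_{\sigma_i}\subseteq K$ which forces $\|y\|_K\leq\|y\|_{K\cap E_{\sigma_i}}$ for every $y\in E_{\sigma_i}$, gives
$$
\|x\|_K=\Big\|\tfrac1{s}\sum_{i=1}^k x_i\Big\|_K\leq \frac1{s}\sum_{i=1}^k\|x_i\|_K\leq \frac1{s}\sum_{i=1}^k\|x_i\|_{K\cap E_{\sigma_i}}.
$$
Exponentiating yields $\prod_{i=1}^k f_i(x_i)^{1/s}\leq e^{-\|x\|_K}$ for every such decomposition, so taking suprema and integrating delivers
$$
\int^*_{\R^n}\sup_{x=\sum_i p_ix_i,\,x_i\in E_{\sigma_i}}\prod_{i=1}^k f_i(x_i)^{p_i}\,dx\;\leq\;\int_{\R^n}e^{-\|x\|_K}\,dx\;=\;n!\,|K|.
$$

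Combining the two estimates through \eqref{RBL} gives $n!\,|K|\geq\prod_{i=1}^k(|\sigma_i|!\,|K\cap E_{\sigma_i}|)^{1/s}$; raising both sides to the $s$-th power produces \eqref{Liakopoulos-ineq} after rearrangement. The only genuinely nontrivial point — and hence the main obstacle — is identifying the correct family of test functions: once the Laplace-type choice $e^{-\|\cdot\|_{K\cap E_{\sigma_i}}}$ is made, the proof reduces to the sublinearity of a Minkowski functional and a single Gamma-function computation.
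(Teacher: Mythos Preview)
Your proof is correct. The verification of \eqref{highdimcond0}, the identity $\int_{\R^m}e^{-\|y\|_C}\,dy=m!\,|C|$, and the sublinearity estimate $\|x\|_K\le\tfrac1s\sum_i\|x_i\|_{K\cap E_{\sigma_i}}$ are all sound, and combining them through \eqref{RBL} gives \eqref{Liakopoulos-ineq} with the stated factorial constants.

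Your route differs from the paper's one-line sketch, which suggests taking $f_i=\mathbf{1}_{K\cap E_{\sigma_i}}$. With indicators, Barthe's inequality yields $\bigl|\tfrac1s\sum_i(K\cap E_{\sigma_i})\bigr|\geq\prod_i|K\cap E_{\sigma_i}|^{1/s}$, and the only evident upper bound on the left comes from the convexity inclusion $\tfrac1s\sum_i(K\cap E_{\sigma_i})\subset\tfrac{k}{s}\,K$, producing the constant $(s/k)^{ns}$ rather than $\prod_i|\sigma_i|!/(n!)^s$; already in the dual Loomis--Whitney case $n=2$ this gives $1/4$ instead of the sharp $1/2$. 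So your remark that the Laplace-type weights are the decisive choice is justified: the exponentials are exactly what manufacture the factorials, and this is the device used in Liakopoulos' original argument \cite{Lia19}. The survey's indicator description appears to be an oversimplification on this point.
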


The equality case of
the  Bollobas-Thomason inequality Theorem~\ref{Bollobas-Thomason} based on Valdimarsson \cite{Val08} has been known to the experts.
Let $s\geq 1$, and let $\sigma_1,\ldots,\sigma_k\subset[n]$ be an $s$-uniform cover
   of $[n]$. We say that
 $\tilde{\sigma}_1,\ldots,\tilde{\sigma}_l\subset[n]$
form a $1$-uniform cover of $[n]$ induced by
the $s$-uniform cover $\sigma_1,\ldots,\sigma_k$ if
$\{\tilde{\sigma}_1,\ldots,\tilde{\sigma}_l\}$ consists of all non-empty distinct subsets of $[n]$
of the form $\cap_{i=1}^k\sigma^{\varepsilon(i)}_i$ where $\varepsilon(i)\in\{0,1\}$ and $\sigma_i^0=\sigma_i$ and  $\sigma_i^1=[n]\setminus\sigma_i$. We observe that $\tilde{\sigma}_1,\ldots,\tilde{\sigma}_l\subset[n]$
actually form a $1$-uniform cover of $[n]$; namely, $\tilde{\sigma}_1,\ldots,\tilde{\sigma}_l$ is a partition of $[n]$.

\begin{theo}[Folklore]
\label{Bollobas-Thomason-eq}
Let $K\subset \R^n$ be compact and affinely span $\R^n$, and let
$\sigma_1,\ldots,\sigma_k\subset[n]$ form an $s$-uniform cover of $[n]$ for $s\geq 1$.
Then equality holds in \eqref{Bollobas-Thomasson-ineq} if and only if
$K=\oplus_{i=1}^l P_{E_{\tilde{\sigma}_i}}K$
where $\tilde{\sigma}_1,\ldots,\tilde{\sigma}_l$ is
the $1$-uniform cover  of $[n]$ induced by
 $\sigma_1,\ldots,\sigma_k$.
\end{theo}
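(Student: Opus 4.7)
The plan is to derive both directions by applying the Geometric Brascamp--Lieb inequality \eqref{BL} (with characteristic functions) and reading off the equality case from Valdimarsson's Theorem~\ref{BLtheoequa}. First I check that with $E_i := E_{\sigma_i}$ and $p_i := 1/s$ the data form a Geometric Brascamp--Lieb datum: for every standard basis vector $e_j$ we have $\sum_{i} p_i P_{E_{\sigma_i}}e_j = \frac{1}{s}\#\{i:j\in\sigma_i\}\cdot e_j = e_j$ by $s$-uniformity, so \eqref{highdimcond0} holds. Taking $f_i = \mathbf{1}_{P_{E_{\sigma_i}}K}$ and noting $\mathbf{1}_{P_{E_{\sigma_i}}K}(P_{E_{\sigma_i}}x)=1$ for $x\in K$, inequality \eqref{BL} gives
$$|K|\leq \int_{\R^n}\prod_{i=1}^k\mathbf{1}_{P_{E_{\sigma_i}}K}(P_{E_{\sigma_i}}x)^{1/s}\,dx\leq\prod_{i=1}^k|P_{E_{\sigma_i}}K|^{1/s},$$
which is \eqref{Bollobas-Thomasson-ineq}. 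Equality in Theorem~\ref{Bollobas-Thomason-eq} forces equality in \emph{both} estimates: $K$ coincides (up to a nullset) with the ``slab body'' $L:=\{x\in\R^n:P_{E_{\sigma_i}}x\in P_{E_{\sigma_i}}K\text{ for all }i\}$, and equality holds in Brascamp--Lieb.

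Next I identify the independent and dependent subspaces of this particular datum, which is the key structural step. Because each $E_{\sigma_i}$ is a coordinate subspace, $E_{\sigma_i}^\perp=E_{[n]\setminus\sigma_i}$, and coordinate subspaces intersect as $E_\sigma\cap E_\tau=E_{\sigma\cap\tau}$. Hence
$$F_{(\varepsilon)}=\bigcap_{i=1}^k E_{\sigma_i^{\varepsilon(i)}}=E_{\bigcap_i\sigma_i^{\varepsilon(i)}}$$
with $\sigma_i^0=\sigma_i$, $\sigma_i^1=[n]\setminus\sigma_i$; the nonzero ones are precisely the $E_{\tilde\sigma_j}$ for the blocks $\tilde\sigma_1,\ldots,\tilde\sigma_\ell$ of the induced $1$-uniform cover. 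Since these blocks partition $[n]$, we get $\bigoplus_{j=1}^\ell E_{\tilde\sigma_j}=\R^n$, and therefore $F_{\rm dep}=\{0\}$.

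Now Valdimarsson's Theorem~\ref{BLtheoequa} applies with no Gaussian factor: for every $i$,
$$\mathbf{1}_{P_{E_{\sigma_i}}K}(x)=\theta_i\prod_{\tilde\sigma_j\subset\sigma_i}h_j\bigl(P_{E_{\tilde\sigma_j}}x\bigr)\quad\text{a.e. on }E_{\sigma_i}.$$
Because the left side is $\{0,1\}$-valued and the projections $P_{E_{\tilde\sigma_j}}$ for $\tilde\sigma_j\subset\sigma_i$ give a direct-sum decomposition of $E_{\sigma_i}$, each $h_j$ must equal (a.e., after absorbing constants) an indicator $\mathbf{1}_{A_j}$ of a measurable set $A_j\subset E_{\tilde\sigma_j}$. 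Projecting the identity $P_{E_{\sigma_i}}K=\bigoplus_{\tilde\sigma_j\subset\sigma_i}A_j$ further onto any single $E_{\tilde\sigma_j}$ with $\tilde\sigma_j\subset\sigma_i$ identifies $A_j$ with $P_{E_{\tilde\sigma_j}}K$. Intersecting over $i$ shows $L=\bigoplus_{j=1}^\ell P_{E_{\tilde\sigma_j}}K$, and combined with $|K|=|L|$ and $K\subset L$ this gives the announced decomposition of $K$.

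For the converse, if $K=\bigoplus_{j=1}^\ell K_j$ with $K_j\subset E_{\tilde\sigma_j}$, then the partition property shows $\sigma_i=\bigcup_{\tilde\sigma_j\subset\sigma_i}\tilde\sigma_j$, hence $P_{E_{\sigma_i}}K=\bigoplus_{\tilde\sigma_j\subset\sigma_i}K_j$. Each $\tilde\sigma_j$ lies in exactly $s$ of the $\sigma_i$'s (pick any $m\in\tilde\sigma_j$; $\tilde\sigma_j\subset\sigma_i$ iff $m\in\sigma_i$), so
$$\prod_{i=1}^k|P_{E_{\sigma_i}}K|=\prod_{i=1}^k\prod_{\tilde\sigma_j\subset\sigma_i}|K_j|=\prod_{j=1}^\ell|K_j|^{s}=|K|^{s}.$$
The main obstacle I expect is the conversion of the almost-everywhere factorization delivered by Valdimarsson into the set-theoretic decomposition of $K$; for compact (but not necessarily convex) $K$ one has to be careful that the nullset discrepancy between $K$ and $L$ does not destroy the product structure, and the statement should be read modulo sets of Lebesgue measure zero (or equivalently after passing to the closure).
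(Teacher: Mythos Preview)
The paper does not actually supply a proof of Theorem~\ref{Bollobas-Thomason-eq}; it is stated as folklore, with the remark that the equality case ``based on Valdimarsson \cite{Val08} has been known to the experts.'' Your approach---applying the Geometric Brascamp--Lieb inequality with $E_i=E_{\sigma_i}$, $p_i=1/s$ and $f_i=\mathbf{1}_{P_{E_{\sigma_i}}K}$, then invoking Theorem~\ref{BLtheoequa} after identifying the independent subspaces as the coordinate blocks $E_{\tilde\sigma_j}$ and observing $F_{\rm dep}=\{0\}$---is exactly the route the paper points to, and the argument you give is correct, including the clean computation in the converse direction.

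Your closing caveat is also accurate and worth emphasizing: for compact but not necessarily convex $K$, the conclusion $K=\bigoplus_{j}P_{E_{\tilde\sigma_j}}K$ can only hold up to a Lebesgue nullset in general (take for instance $K=[0,1]^2$ with a closed segment of zero area removed from its interior; equality in Loomis--Whitney still holds, yet $K$ is a strict subset of the product of its projections). The paper's statement should be read in that sense, and your proof delivers precisely this.
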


On the other hand, Theorem~\ref{RBLtheoequa} yields the characterization
of the equality case of  the dual Bollobas-Thomason inequality Theorem~\ref{Liakopoulos} (cf. 
B\"or\"oczky, Kalantzopoulos, Xi \cite{BKX23}).

\begin{theo}
\label{Liakopoulos-eq}
Let $K\subset \R^n$ be compact convex  with $o\in{\rm int}K$, and let
$\sigma_1,\ldots,\sigma_k\subset[n]$ form an $s$-uniform cover of $[n]$ for $s\geq 1$.
Then equality holds in \eqref{Liakopoulos-ineq} if and only if
$K={\rm conv}\{K\cap F_{\tilde{\sigma}_i}\}_{i=1}^l$
where $\tilde{\sigma}_1,\ldots,\tilde{\sigma}_l$ is
the $1$-uniform cover  of $[n]$ induced by
 $\sigma_1,\ldots,\sigma_k$.
\end{theo}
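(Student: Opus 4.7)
The plan is to apply Barthe's Geometric inequality (\ref{RBL}) to exponential ``tent'' functions and then extract the equality case from Theorem~\ref{RBLtheoequa}. Take $p_i=1/s$, so the $s$-uniform cover condition yields the geometric datum identity $\sum_i p_iP_{E_{\sigma_i}}=I_n$, and set $f_i(y)=e^{-\|y\|_{K\cap E_{\sigma_i}}}$ on $E_{\sigma_i}$, where $\|\cdot\|_B$ is the Minkowski functional of a convex body $B$ with $o\in\inte B$. The standard integration identity $\int_V e^{-\|y\|_B}\,dy=(\dim V)!\,|B|$ turns the right-hand side of (\ref{RBL}) into $\prod_{i=1}^k\bigl(|\sigma_i|!\,|K\cap E_{\sigma_i}|\bigr)^{1/s}$.

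For the left-hand side, the inner supremum at $x$ equals $e^{-\varphi(x)/s}$, where
\[
\varphi(x)=\inf\Bigl\{\sum_{i=1}^k\|x_i\|_{K\cap E_{\sigma_i}}:\;x=\tfrac{1}{s}\sum_{i=1}^k x_i,\;x_i\in E_{\sigma_i}\Bigr\}.
\]
A direct check shows that $\varphi$ is positively $1$-homogeneous and convex with unit sublevel set $\tfrac{1}{s}L$ for $L:={\rm conv}\{K\cap E_{\sigma_i}\}_{i=1}^k$, so $\varphi/s=\|\cdot\|_L$ and the same integration identity yields $\int^*_{\R^n}e^{-\|x\|_L}\,dx=n!\,|L|$ (note $o\in\inte L$ since the $E_{\sigma_i}$ together span $\R^n$). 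Since $L\subseteq K$, Barthe's inequality delivers
\[
n!\,|K|\;\geq\; n!\,|L|\;\geq\;\prod_{i=1}^k\bigl(|\sigma_i|!\,|K\cap E_{\sigma_i}|\bigr)^{1/s},
\]
which is (\ref{Liakopoulos-ineq}) upon raising to the power $s$. Equality in (\ref{Liakopoulos-ineq}) therefore forces both $K=L={\rm conv}\{K\cap E_{\sigma_i}\}_{i=1}^k$ and equality in (\ref{RBL}).

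The main step is to strengthen this to $K={\rm conv}\{K\cap E_{\tilde{\sigma}_j}\}_{j=1}^\ell$ via Theorem~\ref{RBLtheoequa}. A direct calculation of Valdimarsson's data gives $F_{(\varepsilon)}=\bigcap_iE_{\sigma_i}^{(\varepsilon(i))}=E_{\tau_\varepsilon}$ with $\tau_\varepsilon=\bigcap_i\sigma_i^{\varepsilon(i)}$, so the non-empty $\tau_\varepsilon$ are exactly the induced partition $\tilde{\sigma}_1,\ldots,\tilde{\sigma}_\ell$ of $[n]$; in particular $\oplus_\varepsilon F_{(\varepsilon)}=\R^n$, which forces $F_{\rm dep}=\{0\}$, and the independent subspaces are $F_j=E_{\tilde{\sigma}_j}$, with $F_j\subseteq E_{\sigma_i}$ precisely when $\tilde{\sigma}_j\subseteq\sigma_i$. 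Theorem~\ref{RBLtheoequa} then writes $f_i(x)=\theta_i\prod_{F_j\subseteq E_{\sigma_i}}h_j(P_{F_j}(x-w_i))$ for a.e.\ $x\in E_{\sigma_i}$. The key obstacle is to upgrade this multiplicative decomposition to a geometric one, and for this I would exploit that $-\log f_i(x)=\|x\|_{K\cap E_{\sigma_i}}$ is positively $1$-homogeneous in $x$. Writing $G_j:=-\log h_j$, scaling $x\mapsto\lambda x$ and using that the $P_{F_j}x$ range independently in the pairwise orthogonal $F_j$ forces, after a substitution, each $\Psi_j(y):=G_j(y-w_i^{(j)})-G_j(-w_i^{(j)})$ (with $w_i^{(j)}:=P_{F_j}w_i$) to be $1$-homogeneous on $F_j$, so that $\|x\|_{K\cap E_{\sigma_i}}=\sum_{F_j\subseteq E_{\sigma_i}}\Psi_j(P_{F_j}x)$. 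Evaluating at $x\in F_j\subseteq E_{\sigma_i}$ identifies $\Psi_j=\|\cdot\|_{K\cap F_j}=\|\cdot\|_{K\cap E_{\tilde{\sigma}_j}}$; this is precisely the Minkowski functional of the convex hull of the pairwise orthogonal bodies $\{K\cap E_{\tilde{\sigma}_j}:\tilde{\sigma}_j\subseteq\sigma_i\}$, so $K\cap E_{\sigma_i}={\rm conv}\{K\cap E_{\tilde{\sigma}_j}:\tilde{\sigma}_j\subseteq\sigma_i\}$, and taking the convex hull over $i$ yields the claimed equality.

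For the converse direction, if $K={\rm conv}\{K\cap E_{\tilde{\sigma}_j}\}_{j=1}^\ell$, the slicing identity $K\cap E_{\sigma_i}={\rm conv}\{K\cap E_{\tilde{\sigma}_j}:\tilde{\sigma}_j\subseteq\sigma_i\}$ combined with the volume formula $|{\rm conv}\bigcup_jA_j|=\frac{\prod_jd_j!}{(\sum_jd_j)!}\prod_j|A_j|$ for convex bodies $A_j$ of dimension $d_j$ in pairwise orthogonal subspaces (which follows from the same exponential integration identity) reduce both sides of (\ref{Liakopoulos-ineq}) to the common value $(n!)^{-s}\prod_j|\tilde{\sigma}_j|!^s\prod_j|K\cap E_{\tilde{\sigma}_j}|^s$, using that each $\tilde{\sigma}_j$ is contained in exactly $s$ of the $\sigma_i$'s by the $s$-uniform cover property.
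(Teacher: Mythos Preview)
Your argument is correct and follows exactly the route the paper indicates: the paper does not supply a detailed proof but states that the result is a consequence of Theorem~\ref{RBLtheoequa} (referring to \cite{BKX23}), and that is precisely the tool you invoke after first re-deriving \eqref{Liakopoulos-ineq}. Your choice of test functions $f_i=e^{-\|\cdot\|_{K\cap E_{\sigma_i}}}$, rather than the indicators $\mathbf{1}_{K\cap E_{\sigma_i}}$ the paper mentions in passing, is a clean device: it makes the left side of \eqref{RBL} equal $n!\,|L|$ with $L=\mathrm{conv}\{K\cap E_{\sigma_i}\}_i\subset K$, so the chain $n!\,|K|\geq n!\,|L|\geq\prod_i(|\sigma_i|!\,|K\cap E_{\sigma_i}|)^{1/s}$ immediately separates the two sources of equality. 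Your identification of the independent subspaces as the $E_{\tilde{\sigma}_j}$, with $F_{\rm dep}=\{0\}$, is correct, and the homogeneity trick recovering $\|x\|_{K\cap E_{\sigma_i}}=\sum_{\tilde{\sigma}_j\subset\sigma_i}\|P_{E_{\tilde{\sigma}_j}}x\|_{K\cap E_{\tilde{\sigma}_j}}$ is exactly what is needed.

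Two minor points are worth one extra sentence each. First, Theorem~\ref{RBLtheoequa} gives the factorization only for a.e.\ $x\in E_{\sigma_i}$; since your $f_i$ is continuous, a Fubini argument lets you replace each $h_j$ by a continuous representative before running the scaling argument, after which the homogeneity conclusion holds pointwise. Second, when $s=1$ the induced $1$-uniform cover coincides with $\{\sigma_1,\ldots,\sigma_k\}$ and each $F_j$ is contained in a single $E_{\sigma_i}$, so Theorem~\ref{RBLtheoequa} does not guarantee log-concavity of $h_j$; but in that case the equality $K=L$ already \emph{is} the desired conclusion, so the factorization step is not needed.
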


\section{Finiteness of ${\rm BL}(\mathbf{B},\mathbf{p})$ and ${\rm RBL}(\mathbf{B},\mathbf{p})$}
\label{secFiniteness}

Let $\mathbf{B}=(B_1,\ldots,B_k)$ and $\mathbf{p}=(p_1,\ldots,p_k)$ be a Brascamp-Lieb datum  as in 
Theorem~\ref{BLgeneral} and Theorem~\ref{RBLgeneral}; namely, $B_i:\R^n\to H_i$ are surjective linear maps where $H_i$ is $n_i$-dimensional Euclidean space,
$n_i\geq 1$, for $i=1,\ldots,k$ such that
$$
\cap_{i=1}^k {\rm ker}\,B_i=\{0\},
$$
and $p_1,\ldots,p_k>0$ satisfy that $\sum_{i=1}^kp_in_i=n$.

The finiteness of the factor ${\rm BL}(\mathbf{B},\mathbf{p})$ in the Brascamp-Lieb inequality Theorem~\ref{BLgeneral} was characterized by Bennett, Carbery, Christ, Tao \cite{BCCT08}.

\begin{theo}[Bennett, Carbery, Christ, Tao \cite{BCCT08}, Barthe \cite{Bar98}] 
For  a Brascamp-Lieb datum $\mathbf{B}=(B_1,\ldots,B_k)$ and $\mathbf{p}=(p_1,\ldots,p_k)$ as above,
${\rm BL}(\mathbf{B},\mathbf{p})<\infty$ if and only if  ${\rm RBL}(\mathbf{B},\mathbf{p})>0$, which is in turn equivalent with the property that 
\begin{equation}
\label{BLfiniteV}
{\rm dim}\,V\leq \sum_{i=1}^k p_i\cdot  {\rm dim}\,(B_iV)
\end{equation}
for any  linear subspace $V\subset\R^n$. In this case, we have
\begin{equation}
\label{BLRBL0}
{\rm BL}(\mathbf{B},\mathbf{p})\cdot {\rm RBL}(\mathbf{B},\mathbf{p})=1.
\end{equation}
\end{theo}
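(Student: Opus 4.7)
The plan is to reduce everything to a finite-dimensional Gaussian optimization by invoking the Gaussian-extremizer assertions in Theorems~\ref{BLgeneral} and~\ref{RBLgeneral}. Substituting centered Gaussians $f_i(x)=e^{-\langle A_ix,x\rangle}$ with $A_i$ symmetric positive definite into \eqref{BLgeneraleq} and using $\sum_i p_in_i=n$, an explicit Gaussian integral computes the ratio on the left of \eqref{BLgeneraleq} as
$$
\left(\frac{\prod_i(\det A_i)^{p_i}}{\det\bigl(\sum_i p_iB_i^*A_iB_i\bigr)}\right)^{1/2},
$$
so that
$$
{\rm BL}(\mathbf{B},\mathbf{p})^2=\sup_{A_i\text{ p.d.}}\frac{\prod_i(\det A_i)^{p_i}}{\det\bigl(\sum_i p_iB_i^*A_iB_i\bigr)}.
$$
On the RBL side, a Lagrange-multiplier computation shows that with Gaussian inputs the supremum inside \eqref{RBLgeneraleq} is itself a Gaussian, with inverse covariance $\sum_i p_iB_i^*A_i^{-1}B_i$; integrating and substituting $C_i=A_i^{-1}$ yields
$$
{\rm RBL}(\mathbf{B},\mathbf{p})^2=\inf_{C_i\text{ p.d.}}\frac{\det\bigl(\sum_i p_iB_i^*C_iB_i\bigr)}{\prod_i(\det C_i)^{p_i}}.
$$
The two variational formulas are reciprocals of each other, so immediately ${\rm BL}\cdot{\rm RBL}=1$ whenever both lie in $(0,\infty)$, and ${\rm BL}<\infty$ iff ${\rm RBL}>0$.

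Both conditions are thus equivalent to boundedness of the single functional $G(\mathbf{A})=\prod_i(\det A_i)^{p_i}/\det\bigl(\sum_i p_iB_i^*A_iB_i\bigr)$ over positive definite tuples. For the necessity of \eqref{BLfiniteV} I would construct an explicit blowup: if $V\subset\R^n$ satisfies $\dim V>\sum_i p_i\dim(B_iV)$, put $A_i=\varepsilon P_{B_iV}+P_{(B_iV)^\perp}$ on $H_i$, so $\det A_i=\varepsilon^{\dim(B_iV)}$. For any $v\in V$ the vector $B_iv$ lies in $B_iV$, hence $\bigl\langle\sum_i p_iB_i^*A_iB_iv,v\bigr\rangle=\varepsilon\sum_i p_i\|B_iv\|^2$ is $O(\varepsilon\|v\|^2)$. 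The Courant min-max principle then forces $\dim V$ of the eigenvalues of $\sum_i p_iB_i^*A_iB_i$ to be $O(\varepsilon)$, while the remaining ones stay bounded as $\varepsilon\to 0^+$, so $G(\mathbf{A})\gtrsim \varepsilon^{\sum_i p_i\dim(B_iV)-\dim V}\to\infty$.

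The main obstacle is the converse: showing that \eqref{BLfiniteV} forces $G$ to be bounded. I would follow the induction of Bennett-Carbery-Christ-Tao \cite{BCCT08} on $n$, organized around the critical subspaces introduced in Section~\ref{secIntro}. One studies the stationarity conditions of $\log G$ on the open cone of positive definite tuples and shows that at any candidate maximizer either the datum is already equivalent to a Geometric Brascamp-Lieb datum, in which case $G\leq 1$ by Theorem~\ref{BLtheo}, or there exists a proper critical subspace $V$; in the latter case the $B_i$ split along $V\oplus V^\perp$, the inequality \eqref{BLfiniteV} restricts to the two blocks, and the bound follows from the inductive hypothesis applied to each of the lower-dimensional sub-data. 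A technically heavier alternative is the heat-flow argument of Carlen-Lieb-Loss and Barthe-Huet, which proves monotonicity of the BL ratio under simultaneous heat evolution $f_i\mapsto P_tf_i$ and extracts the sharp bound in the large-time Gaussian limit, with \eqref{BLfiniteV} guaranteeing that this limit is finite.
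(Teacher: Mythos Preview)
The paper is a survey and does not supply its own proof of this theorem; it simply attributes the dimension criterion \eqref{BLfiniteV} to Bennett--Carbery--Christ--Tao \cite{BCCT08} and the duality relation \eqref{BLRBL0} to Barthe \cite{Bar98}, referring the reader to those sources. Your sketch is a faithful outline of exactly those two arguments: the Gaussian reduction via Theorems~\ref{BLgeneral} and~\ref{RBLgeneral}, the reciprocal variational formulas for ${\rm BL}^2$ and ${\rm RBL}^2$, the explicit blow-up showing necessity of \eqref{BLfiniteV}, and the critical-subspace induction for sufficiency are precisely the ingredients of \cite{Bar98} and \cite{BCCT08}. So there is nothing to compare against, and your approach is the standard one.

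One small slip worth correcting: in your Lagrange-multiplier step for the Gaussian supremum in \eqref{RBLgeneraleq}, the matrix appearing in the exponent of the resulting Gaussian is $\bigl(\sum_i p_iB_i^*A_i^{-1}B_i\bigr)^{-1}$, not $\sum_i p_iB_i^*A_i^{-1}B_i$ itself; the latter plays the role of covariance rather than inverse covariance. Your final expression for ${\rm RBL}(\mathbf{B},\mathbf{p})^2$ is nonetheless correct, so this is only a labeling issue in the intermediate sentence.
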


Now fixing the surjective linear maps $B_i:\R^n\to H_i$, the question is a nice description of the set of all $\mathbf{p}=(p_1,\ldots,p_k)$ such that  ${\rm BL}(\mathbf{B},\mathbf{p})<\infty$. 
In addition, we say that $f_1,\ldots,f_k$ with positive integral are {\it extremizers} for the Brascamp-Lieb inequality
\eqref{BLgeneraleq} (Barthe's inequality \eqref{RBLgeneraleq}) if equality holds in \eqref{BLgeneraleq} (in \eqref{RBLgeneraleq}) for them. Moreover, $f_1,\ldots,f_k$ are called Gaussian extremizers if there exist some symmetric positive definite $n_i\times n_i$ matric  $A_i$ for $i=1,\ldots,k$ such that $f_i(x)=e^{-\langle A_ix,x\rangle}$ for $x\in H_i$.
According to Barthe \cite{Bar98},  $f_i(x)=e^{-\langle A_ix,x\rangle}$, $i=1,\ldots,k$, form a Gaussian extremizer for the Brascamp-Lieb inequality \eqref{BLgeneraleq} if and only if $f_i(x)=e^{-\langle A_i^{-1}x,x\rangle}$, $i=1,\ldots,k$, form a Gaussian extremizer for Barthe's Reverse Brascamp-Lieb  inequality \eqref{RBLgeneraleq}. 

\begin{theo}[Bennett, Carbery, Christ, Tao \cite{BCCT08}] 
\label{Brascamp-Lieb-polytope}
Fixing the surjective linear maps $B_i:\R^n\to H_i$ where $H_i$ is $n_i$-dimensional Euclidean space,
$n_i\geq 1$, for $i=1,\ldots,k$ such that
$$
\cap_{i=1}^k {\rm ker}\,B_i=\{0\},
$$
the set of all $\mathbf{p}=(p_1,\ldots,p_k)\in\R^k$ such that ${\rm BL}(\mathbf{B},\mathbf{p})<\infty$; namely,
$p_1,\ldots,p_k\geq 0$, $\sum_{i=1}^kp_in_i=n$  and \eqref{BLfiniteV} holds for any  linear subspace $V\subset\R^n$, is a $(k-1)$-dimensional bounded (closed) convex polytope $P_{\mathbf{B}}$. 

In addition, if $\mathbf{p}$ lies in the relative interior of this so-called 
 Brascamp-Lieb polytope $P_{\mathbf{B}}$ (strict inequality holds in \eqref{BLfiniteV} for any  linear subspace $V\subset\R^n$), then there exists a Gaussian extremizer both for the Brascamp-Lieb inequality \eqref{BLgeneraleq}and Barthe's inequality \eqref{RBLgeneraleq}.
\end{theo}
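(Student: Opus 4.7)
The plan is to handle the polyhedral structure of $P_{\mathbf{B}}$ and the existence of Gaussian extremizers in its relative interior as two separate issues; the latter, a compactness argument applied to Lieb's variational formula, is the main obstacle.

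For the polyhedral structure, I would first observe that although the dimension condition \eqref{BLfiniteV} is imposed for every linear subspace $V \subset \R^n$, the tuple $(\dim V, \dim B_1 V, \ldots, \dim B_k V)$ takes only finitely many values as $V$ ranges over subspaces of $\R^n$ (each entry lies in $\{0, 1, \ldots, n\}$). Hence \eqref{BLfiniteV} reduces to finitely many linear inequalities in $\mathbf{p}$, which together with the non-negativity constraints $p_i \geq 0$ and the affine equation $\sum_i p_i n_i = n$ cut out a closed convex polyhedron in $\R^k$. Boundedness follows from $p_j \leq p_j n_j \leq \sum_i p_i n_i = n$, and the polyhedron lies in the $(k-1)$-dimensional affine hyperplane $\sum_i p_i n_i = n$, so its dimension is at most $k-1$. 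Equality will follow once one produces a single $\mathbf{p}$ in the relative interior, i.e.\ one for which every proper non-zero $V$ gives strict inequality in \eqref{BLfiniteV}; this ultimately drops out of the Gaussian extremizer argument below, because any Gaussian extremizer forces the corresponding $\mathbf{p}$ to sit in a relatively open subset of the polytope.

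For the Gaussian extremizers, I would invoke Lieb's variational formula (already contained in Theorem~\ref{BLgeneral}): whenever $\mathrm{BL}(\mathbf{B}, \mathbf{p})$ is finite,
$$
\mathrm{BL}(\mathbf{B},\mathbf{p}) \;=\; \sup_{\mathbf{A}} G(\mathbf{A},\mathbf{p}), \qquad G(\mathbf{A},\mathbf{p}) := \left(\frac{\prod_{i=1}^k (\det A_i)^{p_i}}{\det \sum_{i=1}^k p_i B_i^* A_i B_i}\right)^{1/2},
$$
the supremum running over tuples $\mathbf{A} = (A_1,\ldots,A_k)$ of positive definite matrices on the $H_i$. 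A Gaussian extremizer is by definition a maximizer. Because $\sum p_i n_i = n$, the functional $G$ is invariant under the global rescaling $\mathbf{A} \mapsto \lambda \mathbf{A}$, so I would pick a maximizing sequence $\mathbf{A}^{(m)}$ and normalize --- for instance by $\det \sum_i p_i B_i^* A_i^{(m)} B_i = 1$. A convergent subsequence with positive definite limit would then supply the extremizer, and the correspondence $A_i \leftrightarrow A_i^{-1}$ recalled in the text transfers it to Barthe's inequality.

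The main obstacle is to rule out degeneration of the maximizing sequence, i.e.\ eigenvalues of some $A_i^{(m)}$ escaping to $0$ or $\infty$ even after normalization. The strategy, following Bennett--Carbery--Christ--Tao \cite{BCCT08}, is a multi-scale asymptotic analysis. On passing to a subsequence, I would sort the eigenvalues of each $A_i^{(m)}$ by asymptotic growth rate and encode the resulting filtration by common invariant subspaces of the $H_i$. A careful determinant computation then shows that the leading-order behaviour of $\log G$ along any such one-parameter escape, parametrized by a scale $t \to \infty$, is a non-negative linear combination of contributions of the form
$$
\tfrac{t}{2}\Bigl[\dim V - \sum_{i=1}^k p_i \dim(B_i V)\Bigr]
$$
for suitable subspaces $V \subset \R^n$ determined by the filtration. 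Finiteness of $\mathrm{BL}(\mathbf{B},\mathbf{p})$ forces each such coefficient to be non-positive, which is exactly \eqref{BLfiniteV}; and when $\mathbf{p}$ sits in the relative interior of $P_{\mathbf{B}}$, each coefficient is strictly negative for every proper non-zero $V$, so $\log G$ along every escape direction is bounded above by a strictly negative linear function of $t$. This prevents the maximizing sequence from leaving a compact region of positive definite tuples, yielding the Gaussian extremizer; as a byproduct it produces the promised point in the relative interior of $P_{\mathbf{B}}$, completing the dimension claim. The technically delicate step --- making the multi-scale reduction precise and checking that the subspaces $V$ produced by the filtration indeed cover all the directions in \eqref{BLfiniteV} with the correct signs --- is the heart of the argument.
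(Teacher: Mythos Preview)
The paper is a survey and does \emph{not} prove Theorem~\ref{Brascamp-Lieb-polytope}; it attributes the result to Bennett, Carbery, Christ, Tao \cite{BCCT08} and offers only the single observation (immediately after the Remark) that the tuples $(\dim V,\dim B_1V,\ldots,\dim B_kV)$ take finitely many values, so that \eqref{BLfiniteV} amounts to finitely many linear inequalities. You reproduce that observation verbatim and then go much further, sketching the actual compactness argument of \cite{BCCT08} via Lieb's variational formula and a multi-scale analysis of degenerating maximizing sequences. So there is nothing in the paper to compare your extremizer argument against; your outline is broadly faithful to \cite{BCCT08}.

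One genuine gap in your sketch: the argument for the dimension of $P_{\mathbf{B}}$ being exactly $k-1$ is circular. You say the full dimensionality ``drops out of the Gaussian extremizer argument below, because any Gaussian extremizer forces the corresponding $\mathbf{p}$ to sit in a relatively open subset of the polytope,'' and later that the compactness argument ``as a byproduct \ldots\ produces the promised point in the relative interior of $P_{\mathbf{B}}$.'' But the compactness argument you describe \emph{assumes} $\mathbf{p}$ is in the relative interior (strict inequality in \eqref{BLfiniteV} for every proper nonzero $V$) in order to rule out degeneration; it does not manufacture such a $\mathbf{p}$. Showing that $P_{\mathbf{B}}$ has nonempty interior in the hyperplane $\sum_i p_in_i=n$---equivalently, exhibiting a single $\mathbf{p}$ with $p_i>0$ and strict inequality in \eqref{BLfiniteV} for all proper nonzero $V$---requires a separate construction (in \cite{BCCT08} this is handled independently, e.g.\ by an inductive or perturbative argument using the hypotheses that the $B_i$ are surjective and $\cap_i\ker B_i=\{0\}$). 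Without that, you have only $\dim P_{\mathbf{B}}\le k-1$.
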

\noindent{\bf Remark.} For a Brascamp-Lieb datum $\mathbf{B}=(B_1,\ldots,B_k)$ and $\mathbf{p}=(p_1,\ldots,p_k)$, Bennett, Carbery, Christ, Tao \cite{BCCT08} proved that if there exists an extremizer for the Brascamp-Lieb inequality \eqref{BLgeneraleq}, then there exists a Gaussian extremizer, as well. However, the analogous statement is not known about  Barthe's Reverse Brascamp-Lieb  inequality \eqref{RBLgeneraleq}.\\

The fact that \eqref{BLfiniteV} for any  linear subspace $V\subset\R^n$ means only finitely many inequalities follows from the observation that there are only finitely many possible values of ${\rm dim}\,(B_iV)$ and ${\rm dim}\,V$. The vertices of $P_{\mathbf{B}}$ have been described by Barthe \cite{Bar98} in the rank one case (each $n_i=1$), and by   
 Valdimarsson \cite{Val10} in general.

According to Theorem~\ref{Brascamp-Lieb-polytope}, if $\mathbf{p}=(p_1,\ldots,p_k)$ lies in the relative interior of  $P_{\mathbf{B}}$, then there there exists Gaussian extremizers providing equality in the  Brascamp-Lieb inequality \eqref{BLgeneraleq}. However, if $\mathbf{p}$ lies in the relative boundary of  $P_{\mathbf{B}}$, then possibly we never have equality in the  Brascamp-Lieb inequality \eqref{BLgeneraleq}. On the other hand, we may have Gaussian extremizers for some other $\mathbf{p}$ lying in the relative boundary of  $P_{\mathbf{B}}$. We exhibit this phenomenon on the example of Young's  classical convolution inequality from. We recall that if
$f,g:\R\to[0,\infty)$ are measurable and  $p\geq 1$, then
\begin{align*}
\|f\|_p&=\left(\int_{\R}f^p\right)^{\frac1p};\\
 f* g(x)&=\int_{\R}f(y)g(x-y)\,dy.
\end{align*}

\begin{example}[Young's convolution inequality]
The original inequality by Young \cite{You12} from 1912
is of the following form:
If $p,q,s\geq 1$ with $\frac1p+\frac1q=\frac1s+1$, then there exists a minimal $c_{pq}>0$  such that for any measurable  $f,g:\R\to[0,\infty)$, we have
\begin{equation}
\label{Young1}
\|f*g\|_s\leq c_{pq}\cdot \|f\|_p\cdot \|g\|_q.
\end{equation}
Using the H\"older inequality and its equality case, we see that the version \eqref{Young1} of the Young inequality 
is equivalent with the following statement: Let $r\geq 1$ satisfy that $\frac1s+\frac1r=1$, and hence $\frac1p+\frac1q+\frac1r=2$. If $f,g,h:\R\to[0,\infty)$ are measurable, then
\begin{equation}
\label{Young2}
\int_{\R^2}f(y)g(x-y)h(x)\,dy\,dx\leq c_{pq}\cdot \|f\|_p\cdot \|g\|_q \cdot \|h\|_r.
\end{equation} 
Using the substitution 
$f_1=|f|^p$, $f_2=|g|^q$, $f_3=|h|^r$, $p_1=\frac1p$, $p_2=\frac1q$ and $p_3=\frac1r$, and hence $p_1+p_2+p_3=2$, \eqref{Young2} reads as
\begin{equation}
\label{Young3}
\int_{\R^2}f_1(y)^{p_1}f_2(x-y)^{p_2}f_3(x)^{p_3}\,dydx\leq c_{pq}\cdot \prod_{i=1}^3\left(\int_{\R}f_i\right)^{p_i}.
\end{equation}
Now \eqref{Young3}  is a proper Brascamp-Lieb inequality as in Theorem~\ref{BLgeneral} taking $H_i=\R$, $i=1,2,3$, $B_1(x,y)=y$, $B_2(x,y)=x-y$ and $B_3(x,y)=x$.

Let us see when ${\rm BL}(\mathbf{B},\mathbf{p})<\infty$ for the Brascamp-Lieb datum
$\mathbf{B}=(B_1,B_2,B_3)$ and $\mathbf{p}=(p_1,p_2,p_3)$.
Applying the condition \eqref{BLfiniteV} in the cases when the linear subspace $V$ has equation either $x=0$, or $x=y$, or $y=0$ yields the conditions $p_1+p_2\geq 1$, $p_1+p_3\geq 1$ and $p_2+p_3\geq 1$. Since $p_1+p_2+p_3=2$, we deduce that $P_{\mathbf{B}}\subset\R^3$ is a triangle with vertices $(1,1,0)$, $(1,0,1)$ and $(0,1,1)$. In turn, we also deduce that
$c_{pq}$ is finite in \eqref{Young1} if $p,q,s\geq 1$ satisfy $\frac1p+\frac1q=\frac1s+1$. Actualy, a simple argument based on the H\"older inequality yields that $c_{pq}\leq 1$.

Brascamp, Lieb \cite{BrL76}  proved that extremizers exists in \eqref{Young3} if and only if 
$\mathbf{p}=(p_1,p_2,p_3)$ lies either in the relative interior of $P_{\mathbf{B}}$, or 
$\mathbf{p}$ is a vertex of $P_{\mathbf{B}}$. In particular, if $\mathbf{p}$ lies on the relative interior of a side of $P_{\mathbf{B}}$, then no extremizers exist even if $c_{pq}$ is finite.

\end{example}

\section{Algorithmic and optimization aspects of the Brascamp-Lieb inequality}

Since for algorithms, we want to work with matrices and not with linear maps, we set $H_i=\R^{n_i}$ in the 
Brascamp-Lieb datum; therefore, for the whole section, $B_i:\R^n\to \R^{n_i}$ is a surjective linear map for $i=1,\ldots,k$, such that
\begin{equation}
\label{BiNonTrivial}
\cap_{i=1}^k {\rm ker}\,B_i=\{0\},
\end{equation}
$\mathbf{B}=(B_1,\ldots,B_k)$ and $\mathbf{p}=(p_1,\ldots,p_k)$ where $p_1,\ldots,p_k>0$ and $\sum_{i=1}^kp_in_i=n$.
Following Garg, Gurvits, Oliveira,  Wigderson \cite{GGOW18}, the main question we discuss in this section is how to determine effectively whether ${\rm BL}(\mathbf{B},\mathbf{p})$ is finite for a  Brascamp-Lieb datum $(\mathbf{B},\mathbf{p})$, and if finite, then how to approximate effectively its value.

We write $\mathcal{M}(m)$ to denote the set of  symmetric positive definite $m\times m$ matrices for $m\geq 1$. We note that if $A\in \mathcal{M}(m)$, then
$$
\int_{\R^m}e^{-\pi\langle Ax,x\rangle}\,dx=\sqrt{\det A}.
$$
It follows that the Brascamp-Lieb inequality Theorem~\ref{BLgeneral} proved by Lieb \cite{Lie90} is equivalent with the following statement.

\begin{theo}[Lieb \cite{Lie90}]
For  a Brascamp-Lieb datum $(\mathbf{B},\mathbf{p})$ as above, we have
\begin{equation}
\label{BLBpmatrices}
{\rm BL}(\mathbf{B},\mathbf{p})=\sup\left\{\sqrt{\frac{\prod_{i=1}^k(\det A_i)^{p_i}}{\det \sum_{i=1}^kp_iB_i^*A_iB_i}}:\,A_i\in\mathcal{M}(n_i),\; i=1,\ldots,k \right\}.
\end{equation}
\end{theo}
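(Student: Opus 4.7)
The plan is to reduce the identity \eqref{BLBpmatrices} to a routine Gaussian calculation by invoking the Gaussian-saturation statement that is the nontrivial content of Theorem~\ref{BLgeneral}. That theorem already asserts that the optimal constant ${\rm BL}(\mathbf{B},\mathbf{p})$ in \eqref{BLgeneraleq} is attained, as a supremum, when each $f_i$ runs through the centered Gaussians $f_i(x) = e^{-\pi\langle A_ix,x\rangle}$ with $A_i \in \mathcal{M}(n_i)$. It therefore suffices to evaluate the ratio of the left to the right side of \eqref{BLgeneraleq} on this family and compare with the right-hand side of \eqref{BLBpmatrices}.

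First I would compute the right-hand side of \eqref{BLgeneraleq}: for each $i$, the standard Gaussian integral gives $\int_{H_i} e^{-\pi\langle A_i y,y\rangle}\,dy = (\det A_i)^{-1/2}$, so that
\[
\prod_{i=1}^k \left(\int_{H_i} f_i\right)^{p_i} = \prod_{i=1}^k (\det A_i)^{-p_i/2}.
\]
Next I would rewrite the integrand on $\R^n$ using $\langle A_iB_ix,B_ix\rangle = \langle B_i^*A_iB_ix, x\rangle$, obtaining
\[
\prod_{i=1}^k f_i(B_ix)^{p_i} = \exp\!\Bigl(-\pi \bigl\langle Q x, x\bigr\rangle\Bigr), \qquad Q := \sum_{i=1}^k p_i B_i^* A_i B_i.
\]
The matrix $Q$ is symmetric and, under the hypothesis \eqref{BiNonTrivial}, strictly positive definite: every nonzero $x \in \R^n$ satisfies $B_jx\neq 0$ for some $j$, whence $\langle Qx,x\rangle \geq p_j\langle A_jB_jx, B_jx\rangle > 0$. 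A second Gaussian integral therefore yields $\int_{\R^n} e^{-\pi\langle Qx,x\rangle}\,dx = (\det Q)^{-1/2}$.

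Taking the ratio of these two computations gives exactly
\[
\frac{(\det Q)^{-1/2}}{\prod_{i=1}^k (\det A_i)^{-p_i/2}} = \sqrt{\frac{\prod_{i=1}^k (\det A_i)^{p_i}}{\det \sum_{i=1}^k p_i B_i^* A_i B_i}},
\]
and passing to the supremum over $A_i \in \mathcal{M}(n_i)$ yields \eqref{BLBpmatrices} by Theorem~\ref{BLgeneral}. The main obstacle in this line of argument is of course the Gaussian-saturation property itself, which is the deep theorem of Lieb; once it is granted, the present statement reduces to a short determinant computation. The rigorous derivation of Gaussian saturation proceeds either via the heat-flow monotonicity of Carlen, Lieb, Loss and of Bennett, Carbery, Christ, Tao, via Barthe's optimal-transport rearrangement, or via Lehec's stochastic argument, all of which were mentioned earlier in the introduction.
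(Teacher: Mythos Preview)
Your argument is correct and matches the paper's approach: the paper does not give a separate proof but simply notes that this statement is the reformulation of Theorem~\ref{BLgeneral} obtained by evaluating both sides on centered Gaussians, which is exactly the computation you carry out. Your inclusion of the factor $\pi$ in the Gaussians is a harmless normalization (it is absorbed by the supremum over $A_i$, or equivalently the powers of $\pi$ cancel via $\sum_i p_in_i=n$), and your verification that $Q$ is positive definite under \eqref{BiNonTrivial} is the one point that needs checking.
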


It follows from the condition  \eqref{BLfiniteV} on the subspaces $V$, that if we fix $\mathbf{p}$ in the Brascamp-Lieb datum $(\mathbf{B},\mathbf{p})$, then 
\begin{itemize}
\item the set of all $\mathbf{B}$ such that ${\rm BL}(\mathbf{B},\mathbf{p})<\infty$ is open (in the space of all possible $\mathbf{B}$).
\end{itemize}
Bennett, Bez,  Cowling, Flock \cite{BBCF17} prove the continuity of the  Brascamp-Lieb datum in terms of $\mathbf{B}$.

\begin{theo}[Bennett, Bez,  Cowling, Flock \cite{BBCF17}]
\label{BLBpBcont}
If we fix $\mathbf{p}$ in the Brascamp-Lieb datum $(\mathbf{B},\mathbf{p})$, then 
$\mathbf{B}\mapsto {\rm BL}(\mathbf{B},\mathbf{p})$ is a continuous function of $\mathbf{B}$,
including the values when ${\rm BL}(\mathbf{B},\mathbf{p})=\infty$.
\end{theo}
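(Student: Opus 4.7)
The plan is to establish lower and upper semicontinuity of $\mathbf{B}\mapsto{\rm BL}(\mathbf{B},\mathbf{p})$ separately, viewed as a map into $(0,\infty]$; the admissibility condition $\cap_i\ker B_i=\{0\}$ is open, so ``$\mathbf{B}'\to\mathbf{B}$'' is meaningful. Continuity at an admissible $\mathbf{B}$ with ${\rm BL}(\mathbf{B},\mathbf{p})=\infty$ is, by definition of the order topology on $(0,\infty]$, just lower semicontinuity at $\mathbf{B}$; at points where ${\rm BL}(\mathbf{B},\mathbf{p})$ is finite one needs both. The key tool in both directions is Lieb's formula \eqref{BLBpmatrices}, which writes ${\rm BL}(\mathbf{B},\mathbf{p})^2$ as the supremum over $A=(A_1,\dots,A_k)\in\prod_i\mathcal{M}(n_i)$ of the rational expression
$$F_A(\mathbf{B}')=\frac{\prod_{i=1}^k(\det A_i)^{p_i}}{\det\sum_{i=1}^k p_i(B'_i)^*A_iB'_i}.$$

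Lower semicontinuity is immediate from this representation. For each fixed $A$, the denominator is a polynomial in the entries of $\mathbf{B}'$ and remains strictly positive in a neighborhood of $\mathbf{B}$ because $\sum_ip_i(B'_i)^*A_iB'_i$ is positive definite as soon as $\cap_i\ker B'_i=\{0\}$. Hence $\mathbf{B}'\mapsto F_A(\mathbf{B}')$ is continuous near $\mathbf{B}$, and a supremum of continuous functions is lower semicontinuous; combined with Lieb's formula this yields
$$\liminf_{\mathbf{B}'\to\mathbf{B}}{\rm BL}(\mathbf{B}',\mathbf{p})\;\geq\;{\rm BL}(\mathbf{B},\mathbf{p}),$$
covering in particular the case ${\rm BL}(\mathbf{B},\mathbf{p})=\infty$, which accounts for continuity at the ``infinite'' points of the target.

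For upper semicontinuity at an admissible $\mathbf{B}$ with ${\rm BL}(\mathbf{B},\mathbf{p})<\infty$, I would argue by contradiction. Suppose $\mathbf{B}_m\to\mathbf{B}$ with ${\rm BL}(\mathbf{B}_m,\mathbf{p})^2\to L>{\rm BL}(\mathbf{B},\mathbf{p})^2$ (possibly $L=\infty$), and choose near-maximizing tuples $A^{(m)}$ with $F_{A^{(m)}}(\mathbf{B}_m)\geq L-1/m$. Using the scaling symmetry $A_i\mapsto\lambda_iA_i$ of $F_A$, normalize so that $\sum_ip_i(B_i^{(m)})^*A_i^{(m)}B_i^{(m)}=I_n$, which fixes the denominator to $1$. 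If the normalized $A^{(m)}$ remained in a compact subset of $\prod_i\mathcal{M}(n_i)$, a subsequential limit $A$ would yield $F_A(\mathbf{B})\geq L$, contradicting Lieb's formula at $\mathbf{B}$. The main obstacle is precisely this compactness: since $\mathbf{p}$ may lie on the relative boundary of the Brascamp-Lieb polytope $P_{\mathbf{B}}$ (Theorem~\ref{Brascamp-Lieb-polytope}), no Gaussian extremizer need exist at $\mathbf{B}$ and the $A^{(m)}$ may genuinely try to degenerate (eigenvalues running to $0$ or $\infty$). I would rule out degeneracy by a subspace-tracking argument based on the finiteness criterion \eqref{BLfiniteV}: along a subsequence the eigenspaces of the $A_i^{(m)}$ converge in the Grassmannian, and a direct analysis of $F_{A^{(m)}}$ under the normalization shows that blow-up of the numerator forces the existence of a proper subspace $V\subset\R^n$ eventually violating $\dim V\leq\sum_ip_i\dim(B_iV)$ at $\mathbf{B}$, contradicting the assumed finiteness of ${\rm BL}(\mathbf{B},\mathbf{p})$. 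Turning this heuristic into a clean uniform estimate along $\mathbf{B}_m\to\mathbf{B}$, so that the degenerating subspace persists in the limit, is the delicate heart of the proof.
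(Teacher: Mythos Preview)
The paper is a survey and does not supply a proof of this theorem; it is simply stated with attribution to \cite{BBCF17}. So there is nothing in the paper to compare your argument against, and the proposal has to be assessed on its own merits.

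The lower-semicontinuity half is correct and standard: Lieb's formula \eqref{BLBpmatrices} exhibits ${\rm BL}(\cdot,\mathbf{p})^2$ as a pointwise supremum of functions that are continuous in $\mathbf{B}$ on the (open) set of admissible data, and such a supremum is lower semicontinuous. This already gives continuity at points with ${\rm BL}(\mathbf{B},\mathbf{p})=\infty$.

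In the upper-semicontinuity half there is first a small but genuine error. The ratio $F_A$ is \emph{not} invariant under independent scalings $A_i\mapsto\lambda_iA_i$; only the uniform scaling $A_i\mapsto\lambda A_i$ (the same $\lambda$ for every $i$) is a symmetry, since then both numerator and denominator acquire the factor $\lambda^n$ thanks to $\sum_ip_in_i=n$. Consequently you can normalize only $\det\bigl(\sum_ip_i(B_i^{(m)})^*A_i^{(m)}B_i^{(m)}\bigr)=1$, not the full matrix identity $\sum_ip_i(B_i^{(m)})^*A_i^{(m)}B_i^{(m)}=I_n$. This is repairable, but the argument should be rewritten with the weaker normalization.

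The substantive gap is the one you yourself flag. The ``subspace-tracking'' step that is supposed to convert blow-up of the $A^{(m)}$ into a violation of \eqref{BLfiniteV} at the limit $\mathbf{B}$ is only a heuristic, and the obstacle is real. The degenerating eigenspaces of the $A_i^{(m)}$ need not align with a single subspace $V\subset\R^n$, and even when they do, the limit may land on the \emph{equality} case $\dim V=\sum_ip_i\dim(B_iV)$, which is perfectly compatible with ${\rm BL}(\mathbf{B},\mathbf{p})<\infty$ and yields no contradiction. Turning this into a proof requires a quantitative bound uniform along $\mathbf{B}_m\to\mathbf{B}$, and that is essentially the content of \cite{BBCF17}; their argument is not a bare Gaussian-compactness-plus-Grassmannian-limit but proceeds through more structural considerations on the datum. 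As written, your proposal correctly locates the difficulty without resolving it.
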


We say that the Brascamp-Lieb data $\mathbf{B}=(B_1,\ldots,B_k)$, $\mathbf{p}=(p_1,\ldots,p_k)$
and $\mathbf{B}'=(B'_1,\ldots,B'_m)$, $\mathbf{p}'=(p'_1,\ldots,p'_m)$ are equivalent, if $k=m$, $p'_i=p_i$ for $i=1,\ldots,k$, and there exist $\Phi\in{\rm GL}(n)$ and $\Psi_i\in{\rm GL}(n_i)$, $i=1,\ldots,k$, such that $B'_i=\Psi_i^{-1}B_i\Phi$, $i=1,\ldots,k$.

\begin{theo}[Bennett, Carbery, Christ, Tao \cite{BCCT08}] 
For the equivalent Brascamp-Lieb datums $(\mathbf{B},\mathbf{p})$ and $(\mathbf{B}',\mathbf{p}')$ as above, we have
\begin{equation}
\label{BLBpequivalence}
{\rm BL}(\mathbf{B}',\mathbf{p}')=\frac{\prod_{i=1}^k(\det \Psi_i)^{p_i}}{\det \Phi}
\cdot {\rm BL}(\mathbf{B},\mathbf{p}).
\end{equation}
\end{theo}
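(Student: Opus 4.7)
The plan is to deduce the identity directly from Lieb's Gaussian characterisation of the Brascamp-Lieb constant, viewing the equivalence $B'_i=\Psi_i^{-1}B_i\Phi$ as a change of variables inside the supremum over Gaussian data.

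First I would write down, using Lieb's formula \eqref{BLBpmatrices} applied to the primed datum,
\[
{\rm BL}(\mathbf{B}',\mathbf{p})=\sup_{A'_i\in\mathcal{M}(n_i)}\sqrt{\frac{\prod_{i=1}^{k}(\det A'_i)^{p_i}}{\det\sum_{i=1}^{k}p_i(B'_i)^{*}A'_iB'_i}}\,,
\]
and then expand $(B'_i)^{*}A'_iB'_i=\Phi^{*}B_i^{*}(\Psi_i^{-1})^{*}A'_i\Psi_i^{-1}B_i\Phi$. This already isolates the outer $\Phi,\Phi^{*}$, giving
\[
\sum_{i=1}^{k}p_i(B'_i)^{*}A'_iB'_i=\Phi^{*}\!\left(\sum_{i=1}^{k}p_iB_i^{*}A_iB_i\right)\!\Phi,\qquad A_i:=(\Psi_i^{-1})^{*}A'_i\Psi_i^{-1}.
\]

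Next I would observe that the assignment $A'_i\mapsto A_i=(\Psi_i^{*})^{-1}A'_i\Psi_i^{-1}$ is a bijection of $\mathcal{M}(n_i)$ onto itself (its inverse is $A_i\mapsto \Psi_i^{*}A_i\Psi_i$, which preserves symmetry and positive definiteness since $\Psi_i$ is invertible). Under this reparametrisation, the numerator transforms as
\[
\prod_{i=1}^{k}(\det A'_i)^{p_i}=\prod_{i=1}^{k}(\det\Psi_i)^{2p_i}\cdot\prod_{i=1}^{k}(\det A_i)^{p_i},
\]
using $\det A'_i=(\det\Psi_i)^{2}\det A_i$, while the denominator transforms as
\[
\det\sum_{i=1}^{k}p_i(B'_i)^{*}A'_iB'_i=(\det\Phi)^{2}\cdot\det\sum_{i=1}^{k}p_iB_i^{*}A_iB_i.
\]
Substituting these two identities inside the square root, the constant factor $\prod_{i}(\det\Psi_i)^{p_i}/\det\Phi$ (in absolute value; it is understood that determinants are taken with the natural sign/absolute value convention used in the statement) pulls outside the supremum, and the remaining supremum over $A_i\in\mathcal{M}(n_i)$ is exactly ${\rm BL}(\mathbf{B},\mathbf{p})$ by \eqref{BLBpmatrices} applied to the unprimed datum. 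This gives \eqref{BLBpequivalence}.

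There is essentially no obstacle; the only point requiring a moment of care is verifying that the change of variables $A'_i\leftrightarrow A_i$ really is a bijection of the cone $\mathcal{M}(n_i)$ (so that the two suprema range over corresponding sets), and checking that the exponents $p_i$ match up correctly with the determinantal factors produced by $\Psi_i$. Both are routine: the first follows from congruence preserving positive definiteness, and the second from $\det(\Psi_i^{*}A_i\Psi_i)=(\det\Psi_i)^{2}\det A_i$ together with the exponent $p_i$ coming directly from the factor $(\det A'_i)^{p_i}$ in \eqref{BLBpmatrices}. The infinite case ${\rm BL}(\mathbf{B},\mathbf{p})=\infty$ is handled identically, since the multiplicative factor is finite and positive, so finiteness is preserved under equivalence (which is consistent with the finiteness criterion \eqref{BLfiniteV} being invariant under replacing $V$ by $\Phi^{-1}V$).
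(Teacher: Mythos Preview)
Your argument is correct. The substitution $A_i=(\Psi_i^{*})^{-1}A'_i\Psi_i^{-1}$ is indeed a bijection of $\mathcal{M}(n_i)$, the determinantal bookkeeping is right, and the factor $\prod_i|\det\Psi_i|^{p_i}/|\det\Phi|$ pulls out of the supremum exactly as you say; the absolute-value caveat you flag is the only cosmetic point.

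As for comparison: the paper does not give its own proof of this statement. It is a survey, and the theorem is simply quoted from Bennett, Carbery, Christ, Tao \cite{BCCT08} without argument. Your derivation via Lieb's Gaussian formula \eqref{BLBpmatrices} is the natural and standard one; an essentially equivalent route is to change variables $x\mapsto\Phi x$ directly in the integral form \eqref{BLgeneraleq} of the Brascamp--Lieb inequality (together with $f_i\mapsto f_i\circ\Psi_i$), which produces the same Jacobian factors and avoids passing through Gaussians. Either way the content is the same two-line computation you wrote down.
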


 Now the paper \cite{BCCT08} also showed that the existence of Gaussian maximizers is equivalent to saying that the Brascamp-Lieb datum is equivalent to a geometric one.
We write $B^*$ to denote the transpose of a matrix $B$.

\begin{defi}[Geometric Brascamp-Lieb datum]
\label{GeometricProperties}
We say that a  Brascamp-Lieb datum $(\mathbf{B},\mathbf{p})$ as at the beginning of the section is {\it geometric} if
\begin{description}
\item[Projection] $B_iB_i^*=I_{n_i}$ for $i=1,\ldots,k$;
\item[Isotropy] $\sum_{i=1}^kp_iB_i^*B_i=I_n$.
\end{description}
\end{defi}
\noindent{\bf Remarks.} 
\begin{itemize}
\item In this case, we can take $E_i=B_i^*\R^{n_i}$  and $B_i=P_{E_i}$ in order to obtain \eqref{highdimcond0}; namely, the equivalent the "Geometric Brascamp-Lieb datum" of Section~\ref{secIntro}.

\item In the geometric case, we have 
\begin{equation}
\label{geometricBLBp}
{\rm BL}(\mathbf{B},\mathbf{p})={\rm RBL}(\mathbf{B},\mathbf{p})=1 
\end{equation}
according to Keih Ball \cite{Bal89} in the rank one case, and Barthe \cite{Bar98} in general. One set of extremizers are
$f_i(x)=e^{-\pi\langle x,x\rangle}$ for $x\in\R^{n_i}$ and $i=1,\ldots,k$ (both for the Brascamp-Lieb inequality and Barthe's Reverse  Brascamp-Lieb inequality).

\item If both properties in Definition~\ref{GeometricProperties} hold, then the relations of non-triality (cf. \eqref{BiNonTrivial}) and $\sum_{i=1}^kp_in_i=n$ automatically hold.

\end{itemize}

If both the properties "Projection" and "Isotropic" hold, then we the Brascamp-Lieb constant is $1$ according \eqref{geometricBLBp}. However, already one of the conditions ensure that $1$ is a lower bound.

\begin{prop}[Garg, Gurvits, Oliveira,  Wigderson \cite{GGOW18}]
\label{ProjIsoBLBp}
If a  Brascamp-Lieb datum $(\mathbf{B},\mathbf{p})$ satisfies either the property "Projection" or "Isotropic"
in Definition~\ref{GeometricProperties}, then
\begin{equation}
\label{ProjIsoBLBp-eq}
{\rm BL}(\mathbf{B},\mathbf{p})\geq 1. 
\end{equation}
\end{prop}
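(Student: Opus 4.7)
The plan is to exploit the variational characterization of ${\rm BL}(\mathbf{B},\mathbf{p})$ established by Lieb,
\[
{\rm BL}(\mathbf{B},\mathbf{p})^2=\sup_{A_i\in\mathcal{M}(n_i)}\frac{\prod_{i=1}^k(\det A_i)^{p_i}}{\det \sum_{i=1}^kp_iB_i^*A_iB_i},
\]
and exhibit a single admissible tuple $(A_1,\dots,A_k)$ for which the quotient on the right is already at least $1$; since the right-hand side is a supremum, one trial choice suffices. The natural candidate $A_i=I_{n_i}$ for $i=1,\dots,k$ makes the numerator equal to $1$ and reduces the task to proving $\det M\le 1$, where
\[
M:=\sum_{i=1}^k p_iB_i^*B_i.
\]

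In the Isotropic case the bound is immediate: the hypothesis is precisely $M=I_n$, so $\det M=1$ and the quotient equals $1$ exactly, yielding ${\rm BL}(\mathbf{B},\mathbf{p})\ge 1$.

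In the Projection case a little more work is needed, and this is the only genuine step of the proof. First I would record that $M$ is positive definite, since $\langle Mv,v\rangle=\sum_i p_i\|B_iv\|^2$ vanishes only on $\cap_i\ker B_i=\{0\}$. Next, the assumption $B_iB_i^*=I_{n_i}$ implies that $B_i^*B_i$ is the orthogonal projection of $\R^n$ onto $(\ker B_i)^\perp$, hence $\operatorname{tr}(B_i^*B_i)=\operatorname{tr}(B_iB_i^*)=n_i$. Combined with the defining relation $\sum_i p_in_i=n$ of a Brascamp-Lieb datum, this gives $\operatorname{tr}(M)=n$, and AM-GM applied to the positive eigenvalues of $M$ yields
\[
\det M\le \Bigl(\tfrac{\operatorname{tr}(M)}{n}\Bigr)^n=1,
\]
so that ${\rm BL}(\mathbf{B},\mathbf{p})\ge (\det M)^{-1/2}\ge 1$, as desired.

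The argument is essentially a one-liner once Lieb's variational formula is invoked, so there is no serious obstacle; the main points to track are that the trial tuple $A_i=I_{n_i}$ is admissible and that the denominator in the supremum is strictly positive, both consequences of the non-triviality assumption $\cap_i\ker B_i=\{0\}$. I note in passing that the very same trial choice shows ${\rm BL}(\mathbf{B},\mathbf{p})\ge (\det M)^{-1/2}$ for \emph{every} Brascamp-Lieb datum, with the two geometric hypotheses entering only to force $\det M \le 1$ (trivially in the isotropic case, via AM-GM on the trace in the projection case).
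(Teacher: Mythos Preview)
Your argument is correct. The survey paper does not actually include a proof of this proposition; it is stated and attributed to Garg, Gurvits, Oliveira, Wigderson \cite{GGOW18} without further justification, so there is no ``paper's own proof'' to compare against here. That said, your approach---plugging the trial tuple $A_i=I_{n_i}$ into Lieb's variational formula \eqref{BLBpmatrices} and then bounding $\det\bigl(\sum_i p_iB_i^*B_i\bigr)$---is the natural one and matches the standard argument. The Isotropic case is indeed immediate, and in the Projection case your use of $\operatorname{tr}(B_i^*B_i)=\operatorname{tr}(B_iB_i^*)=n_i$ together with $\sum_i p_in_i=n$ and AM--GM on the eigenvalues is exactly right. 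Your side remarks on positive definiteness of $M$ (needed so that the quotient is well-defined) and on admissibility of the trial tuple are also in order.
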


Let us reformulate the results in the previous section by Bennett, Carbery, Christ, Tao \cite{BCCT08} about the finiteness of $(\mathbf{B},\mathbf{p})$ in the way such that it is used as a test for the algorithm by
 Garg, Gurvits, Oliveira,  Wigderson \cite{GGOW18}.

\begin{theo}[Bennett, Carbery, Christ, Tao \cite{BCCT08}] 
Let $(\mathbf{B},\mathbf{p})$ be a  Brascamp-Lieb datum  as at the beginning of the section.
\begin{description}
\item[${\rm BL}(\mathbf{B},\mathbf{p})$ finite] If $(\mathbf{B},\mathbf{p})$ is equivalent to a geometric Brascamp-Lieb datum, then $(\mathbf{B},\mathbf{p})$ is finite.

\item[${\rm BL}(\mathbf{B},\mathbf{p})$ infinite] If there exists a linear subspace $V\subset\R^n$ such that
$$
{\rm dim}\,V> \sum_{i=1}^k p_i\cdot  {\rm dim}\,(B_iV),
$$
 then $(\mathbf{B},\mathbf{p})$ is infinite.
\end{description}

\end{theo}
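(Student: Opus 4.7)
The plan is to establish the two implications separately, in both cases through Lieb's variational formula \eqref{BLBpmatrices} for ${\rm BL}(\mathbf{B},\mathbf{p})$ as a supremum over Gaussians.

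For the \textbf{finiteness} direction, assume $(\mathbf{B},\mathbf{p})$ is equivalent to a geometric Brascamp-Lieb datum $(\mathbf{B}',\mathbf{p}')$. By the Ball-Barthe identity recorded in \eqref{geometricBLBp}, ${\rm BL}(\mathbf{B}',\mathbf{p}') = 1$. The transformation rule \eqref{BLBpequivalence} then expresses ${\rm BL}(\mathbf{B},\mathbf{p})$ as a positive finite multiple of ${\rm BL}(\mathbf{B}',\mathbf{p}')$, so it is finite.

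For the \textbf{infinite} direction, suppose $V \subset \R^n$ satisfies $d := \dim V > \sum_{i=1}^k p_i \dim(B_iV)$. The idea is to exhibit a one-parameter family of Gaussians whose ratio in \eqref{BLBpmatrices} blows up. For $\lambda \geq 1$, let $P_i$ denote the orthogonal projection of $\R^{n_i}$ onto $B_iV$ and set
\[
A_i(\lambda) := P_i + \lambda (I_{n_i} - P_i) \in \mathcal{M}(n_i).
\]
Then $\det A_i(\lambda) = \lambda^{n_i - \dim(B_iV)}$, and using $\sum_i p_i n_i = n$,
\[
\prod_{i=1}^k (\det A_i(\lambda))^{p_i} = \lambda^{n - \sum_i p_i \dim(B_iV)}.
\]

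It remains to bound $T(\lambda) := \sum_i p_i B_i^* A_i(\lambda) B_i$ from above. The key observation is that $A_i(\lambda)$ acts as the identity on $B_iV$: for $v \in V$, $A_i(\lambda) B_i v = B_i v$, so the principal block of $T(\lambda)$ on $V$ is the fixed positive operator $P_V \bigl(\sum_i p_i B_i^* B_i\bigr) P_V$, \emph{independent} of $\lambda$. On $V^\perp$, the operator-norm bound $\|A_i(\lambda)\| \leq \lambda$ gives $T(\lambda)|_{V^\perp} \preceq \lambda \cdot P_{V^\perp} \bigl(\sum_i p_i B_i^* B_i\bigr) P_{V^\perp}$, so the determinant of the $V^\perp$-block is at most $C \lambda^{n-d}$ for a constant $C$ depending only on $\mathbf{B}$ and $V$. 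Fischer's inequality for positive definite block matrices then yields $\det T(\lambda) \leq C' \lambda^{n-d}$. Combining, the quantity inside the square root in \eqref{BLBpmatrices} dominates a positive constant times $\lambda^{d - \sum_i p_i \dim(B_iV)}$, which tends to $\infty$ with $\lambda$ by hypothesis. Hence ${\rm BL}(\mathbf{B},\mathbf{p}) = \infty$.

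The main obstacle is obtaining a usable upper bound on $\det T(\lambda)$; the trick is to tailor the $A_i$ so that they are the identity on the distinguished subspace $B_iV$, which makes the $V$-block of $T(\lambda)$ constant in $\lambda$, and then apply Fischer's inequality so that only the crude operator-norm bound on the $V^\perp$-block is required. The numerology $\sum p_i n_i = n$ then converts the excess $\dim V - \sum p_i \dim(B_iV)$ into the exponent that drives the divergence.
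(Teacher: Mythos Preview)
The paper does not give its own proof of this statement; it is quoted as a result of Bennett, Carbery, Christ, Tao \cite{BCCT08} (with the accompanying Remark), so there is nothing in the paper to compare your argument against.

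That said, your proof is correct. The finiteness direction follows immediately from the two cited facts \eqref{geometricBLBp} and \eqref{BLBpequivalence}, exactly as you say. For the infinite direction, the choice $A_i(\lambda)=P_i+\lambda(I_{n_i}-P_i)$ is the natural one, and the crucial observation---that $A_i(\lambda)$ restricts to the identity on $B_iV$, forcing the $V$-block of $T(\lambda)$ to equal the fixed operator $P_V\bigl(\sum_i p_iB_i^*B_i\bigr)\big|_V$---is right. Together with $A_i(\lambda)\preceq\lambda I_{n_i}$ on the complementary block and Fischer's inequality, this gives $\det T(\lambda)\leq C'\lambda^{n-d}$, and the exponent count using $\sum_i p_in_i=n$ produces the divergence.

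Two minor points worth making explicit for completeness: the non-degeneracy hypothesis $\cap_i\ker B_i=\{0\}$ guarantees that $\sum_i p_iB_i^*B_i$ is positive definite, so the constants $C,C'$ are finite and positive and $T(\lambda)$ is genuinely in $\mathcal{M}(n)$ (so Lieb's formula and Fischer's inequality apply); and the compression of a positive definite operator to any subspace is again positive definite, so $\det T_{11}>0$.
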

\noindent{\bf Remark.} Naturally, if $(\mathbf{B},\mathbf{p})$ is geometric, then  even there exists some maximizer
$A_1,\ldots,A_k$ in \eqref{BLBpmatrices} (and equivalently, some Gaussian maximizer in the Brascamp-Lieb inequality Theorem~\ref{BLgeneral}).\\

Theorem~\ref{BLBpBcont} about the continuity of the Brascamp-Lieb constant, and the above statements raise the hope that fixing $\mathbf{p}$ in the Brascamp-Lieb datum $(\mathbf{B},\mathbf{p})$ and varying $\mathbf{B}$, one might be able to find an efficient algorithm calculating ${\rm BL}(\mathbf{B},\mathbf{p})$. This was achieved by Garg, Gurvits, Oliveira,  Wigderson \cite{GGOW18}.

For any Brascamp-Lieb datum $(\mathbf{B},\mathbf{p})$ as at the beginning of the section, it can be easily achieved that at least one of the properties in Definition~\ref{GeometricProperties} hold.\\

\noindent{\bf "Projection-normalization":} For $C_i=B_iB_i^*$, $i=1,\ldots,k$, - that is an invertible $n_i\times n_i$ matrix by the non-triviality condition \eqref{BiNonTrivial} -, replace $B_i$ by $C_i^{-1/2}B'_i=B_i$, $i=1,\ldots,k$, and hence the Brascamp-Lieb datum $(\mathbf{B}',\mathbf{p})$ satisfies the "Projection" condition.\\

\noindent{\bf "Isotropy-normalization":} For $C=\sum_{i=1}^kp_iB_i^*B_i$ - that is an invertible $n\times n$ matrix by the non-triviality condition \eqref{BiNonTrivial} -, replace $B_i$ by $B'_i=B_iC^{-1/2}$, $i=1,\ldots,k$, and hence the Brascamp-Lieb datum $(\mathbf{B}',\mathbf{p})$ satisfies the "Isotropy" condition.\\

The key statement ensuring the effectiveness of the algorithm by Garg, Gurvits, Oliveira,  Wigderson \cite{GGOW18} is the following (we repeat Proposition~\ref{ProjIsoBLBp} in order to ensure the clarity of the statement).

\begin{theo}[Garg, Gurvits, Oliveira,  Wigderson \cite{GGOW18}]
Let $(\mathbf{B},\mathbf{p})$ be a  Brascamp-Lieb datum with ${\rm BL}(\mathbf{B},\mathbf{p})<\infty$
 where $\mathbf{B}$ has binary length
$b$ and  $,\mathbf{p}$ has common denominator $d$, and let $(\mathbf{B}',\mathbf{p})$ be the Brascamp-Lieb datum obtained from $(\mathbf{B},\mathbf{p})$ by either Projection-normalization or Isotropy-normalization.
\begin{description}
\item[Upper bound]  ${\rm BL}(\mathbf{B},\mathbf{p}) \leq \exp({\rm poly}(b, \log d))$.
\item[Lower bound] $(\mathbf{B}',\mathbf{p})\geq 1$.
\item[Progress per step]  If ${\rm BL}(\mathbf{B},\mathbf{p})>1+\varepsilon$ for $\varepsilon>0$, then
$$
{\rm BL}(\mathbf{B}',\mathbf{p})\leq \left(1-{\rm poly}\left(\frac{\varepsilon}{nd}\right)\right){\rm BL}(\mathbf{B},\mathbf{p}).
$$

\end{description}
\end{theo}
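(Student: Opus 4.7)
My plan is to treat the three assertions separately, in order of increasing difficulty, using Lieb's variational formula \eqref{BLBpmatrices} and the equivalence rule \eqref{BLBpequivalence} as the main tools.

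For the lower bound, a direct computation shows that after projection-normalization $B_i'(B_i')^* = C_i^{-1/2}(B_iB_i^*)C_i^{-1/2} = I_{n_i}$, so the Projection property of Definition~\ref{GeometricProperties} holds; and after isotropy-normalization $\sum_i p_i(B_i')^* B_i' = C^{-1/2} C C^{-1/2} = I_n$, so the Isotropy property holds. In either case Proposition~\ref{ProjIsoBLBp} yields ${\rm BL}(\mathbf{B}',\mathbf{p}) \geq 1$.

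For the upper bound, I would expand the denominator $\det(\sum_i p_i B_i^* A_i B_i)$ in Lieb's formula \eqref{BLBpmatrices} by Cauchy--Binet, writing it as a sum over $n$-element index sets for the block-composite matrix $(B_1,\ldots,B_k)$ weighted by monomials in the $p_i$'s and in the spectra of the $A_i$'s. The ratio in \eqref{BLBpmatrices} is scale-invariant in each $A_i$ (owing to $\sum p_i n_i = n$), so one may normalize $\prod (\det A_i)^{p_i} = 1$. Finiteness of ${\rm BL}(\mathbf{B},\mathbf{p})$, combined with the vertex description of the BL polytope, forces at least one such $n$-minor to be nonzero; its rationality (bit length $b$) bounds it from below by $2^{-{\rm poly}(b)}$, while the common denominator $d$ bounds the associated monomial from below by $d^{-{\rm poly}(n)}$. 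Together these give ${\rm BL}(\mathbf{B},\mathbf{p}) \leq \exp({\rm poly}(b,\log d))$.

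The main obstacle is the progress-per-step estimate. By \eqref{BLBpequivalence}, projection-normalization (with $\Psi_i=(B_iB_i^*)^{1/2}$, $\Phi=I_n$) yields
\[
{\rm BL}(\mathbf{B}',\mathbf{p}) = \prod_{i=1}^k (\det B_iB_i^*)^{p_i/2}\,{\rm BL}(\mathbf{B},\mathbf{p}),
\]
while isotropy-normalization (with $\Phi=(\textstyle\sum_j p_jB_j^*B_j)^{-1/2}$, $\Psi_i=I_{n_i}$) yields
\[
{\rm BL}(\mathbf{B}',\mathbf{p}) = \bigl(\det \textstyle\sum_j p_jB_j^*B_j\bigr)^{1/2}\,{\rm BL}(\mathbf{B},\mathbf{p}).
\]
In the alternating scheme where the datum already satisfies the \emph{complementary} property, a weighted AM--GM applied to the appropriate trace identity (${\rm tr}\sum p_j B_j^*B_j = \sum p_j n_j = n$ when Projection holds, and symmetrically in the other case) shows that the multiplicative factor is at most $1$, with equality iff the datum is fully geometric. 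To upgrade this to a strict drop of the form $1-{\rm poly}(\varepsilon/(nd))$, I would combine two inputs: (a) a spectral estimate, namely that a positive definite $m\times m$ matrix $M$ with ${\rm tr}\,M=m$ and $\|M-I_m\|_{\rm op}\geq\delta$ satisfies $\det M \leq 1 - c\delta^2/m$; and (b) a quantitative rigidity statement, namely that ${\rm BL}(\mathbf{B},\mathbf{p}) \geq 1+\varepsilon$ forces the normalization matrix to be $\Omega(\varepsilon/{\rm poly}(n,d))$-far from the identity in operator norm. Step (b) is the crux: it converts the qualitative equivalence ``${\rm BL}=1 \Leftrightarrow$ geometric'' into an effective perturbative bound, using the Lieb/BCCT characterization of extremizers and the rationality of $(\mathbf{B},\mathbf{p})$ to rule out flat optimization landscapes. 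Once (b) is established, (a) converts the spectral gap into the advertised multiplicative progress.
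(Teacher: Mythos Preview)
The paper does not contain a proof of this theorem: it is a survey, and the result is stated with attribution to Garg, Gurvits, Oliveira, Wigderson \cite{GGOW18} without any argument beyond the preceding proposition. So there is no ``paper's own proof'' to compare your proposal against.

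Assessing your proposal on its own merits: the \textbf{Lower bound} part is complete and correct --- it is exactly the content of Proposition~\ref{ProjIsoBLBp}, and your verification that each normalization achieves the corresponding property is right. Your computation of the multiplicative factors via \eqref{BLBpequivalence} is also correct, as is the weighted AM--GM argument showing the factor is at most $1$ when the complementary property already holds.

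However, your \textbf{Upper bound} sketch has a gap: you invoke ``the vertex description of the BL polytope'' to force a nonzero $n$-minor, but that description concerns the geometry of $P_{\mathbf{B}}$ in $\mathbf{p}$-space and does not by itself give the combinatorial nonvanishing you need for \emph{this particular} $\mathbf{p}$; one needs an explicit argument (in \cite{GGOW18} this goes through a capacity/operator-scaling framework, not through minors directly). More seriously, in \textbf{Progress per step} you correctly identify step~(b) --- the quantitative rigidity ``${\rm BL}\geq 1+\varepsilon$ forces the normalization matrix to be $\Omega(\varepsilon/{\rm poly}(n,d))$-far from the identity'' --- as the crux, but you do not prove it. Appealing to ``the Lieb/BCCT characterization of extremizers'' is not enough: that characterization is qualitative and gives no effective dependence on $\varepsilon$, $n$, or $d$. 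The actual argument in \cite{GGOW18} establishes this via a robustness analysis of operator scaling that is substantially more involved than a perturbation of the equality case. As written, your proposal is a reasonable outline of the architecture of the proof, but the decisive estimate remains unproved.
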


The basic idea of the algorithm by Garg, Gurvits, Oliveira,  Wigderson \cite{GGOW18} is that at the $m$th step, Projection-normalization is executed if $m$ is odd, and Isotropy-normalization is executed if $m$ is even.

\begin{theo}[Garg, Gurvits, Oliveira,  Wigderson \cite{GGOW18}]
There exists an algorithm such that on a  Brascamp-Lieb datum
$(\mathbf{B},\mathbf{p})$  where $\mathbf{B}$ has binary length
$b$ and  $,\mathbf{p}$ has common denominator $d$, and assuming an accuracy parameter $\varepsilon\in(0,1)$, the  algorithm
runs in time ${\rm poly}(b, d,1/\varepsilon)$, and 
\begin{itemize}
\item either computes a factor $(1+\varepsilon)$ approximation of ${\rm BL}(\mathbf{B},\mathbf{p})$
(in the case ${\rm BL}(\mathbf{B},\mathbf{p})<\infty)$,
\item or produces a linear subspace $V\subset\R^n$ satisfying the condition\\ ${\rm dim}\,V> \sum_{i=1}^k p_i\cdot  {\rm dim}\,(B_iV)$ (in the case  ${\rm BL}(\mathbf{B},\mathbf{p})=\infty$).
\end{itemize}
\end{theo}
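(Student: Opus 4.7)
The plan is to assemble the three quantitative ingredients of the previous theorem of Garg, Gurvits, Oliveira and Wigderson into an alternating scaling algorithm in the spirit of Sinkhorn iteration. Initialize $(\mathbf{B}_0,\mathbf{p}):=(\mathbf{B},\mathbf{p})$ and, at step $m\geq 1$, apply Projection-normalization if $m$ is odd and Isotropy-normalization if $m$ is even, obtaining $(\mathbf{B}_m,\mathbf{p})$. Each step transforms the datum by explicit invertible matrices $\Phi_m\in {\rm GL}(n)$ and $\Psi_{i,m}\in {\rm GL}(n_i)$, so by the equivalence formula \eqref{BLBpequivalence} one can maintain a running positive scalar $\gamma_m$ satisfying
\begin{equation*}
{\rm BL}(\mathbf{B},\mathbf{p})=\gamma_m\cdot {\rm BL}(\mathbf{B}_m,\mathbf{p}).
\end{equation*}

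In the finite case, the three ingredients combine directly. The Upper bound gives ${\rm BL}(\mathbf{B}_0,\mathbf{p})\leq U:=\exp({\rm poly}(b,\log d))$; the Lower bound gives ${\rm BL}(\mathbf{B}_m,\mathbf{p})\geq 1$ for each $m\geq 1$; and the Progress per step bound yields geometric decay by a factor $(1-\delta)$ with $\delta={\rm poly}(\varepsilon/(nd))$ whenever the current BL exceeds $1+\varepsilon$. Hence after $T=\lceil \log(U)/\delta\rceil={\rm poly}(b,d,1/\varepsilon)$ iterations some $m\leq T$ must satisfy ${\rm BL}(\mathbf{B}_m,\mathbf{p})\leq 1+\varepsilon$, since otherwise continued geometric decay would violate the Lower bound. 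At this step return $\gamma_m$; because $1\leq {\rm BL}(\mathbf{B}_m,\mathbf{p})\leq 1+\varepsilon$, this is a multiplicative $(1+\varepsilon)$-approximation of ${\rm BL}(\mathbf{B},\mathbf{p})$.

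The main obstacle is the infinite case, where one must extract an integer-dimensional witness subspace $V$ violating \eqref{BLfiniteV} from what is intrinsically a continuous algorithm. Run the same iteration for $T={\rm poly}(b,d,1/\varepsilon)$ steps and observe that if ${\rm BL}(\mathbf{B},\mathbf{p})=\infty$ then the iterates cannot converge to any geometric datum; consequently at least one of the Gram-type matrices $B_iB_i^*$ or $\sum_ip_iB_i^*B_i$ being inverted during the normalizations must develop an eigenvalue below any prescribed threshold of size $1/{\rm poly}(U,T)$. The technical heart is a rounding argument: the tuple $(\dim V,\dim(B_1V),\ldots,\dim(B_kV))$ ranges over a finite set of integers, and because the weights $p_i$ share common denominator $d$, the combinatorial inequality \eqref{BLfiniteV} either holds with slack at least $1/d$ or fails strictly. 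Hence any sufficiently small near-singular subspace produced by the iteration can be rounded to a bona fide $V$ witnessing $\dim V>\sum_ip_i\dim(B_iV)$. Verifying that the precision achieved after $T$ iterations is below the $1/{\rm poly}(d)$ rounding threshold, and hence that the extraction succeeds in polynomial time, is the delicate step where the quantitative content of the previous theorem is used most sharply.
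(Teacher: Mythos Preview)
The paper is a survey and does not prove this theorem; it only records the basic idea---alternating Projection- and Isotropy-normalizations---and refers to \cite{GGOW18} for the argument. Your proposal follows that outline and correctly assembles the three ingredients of the preceding theorem to show that, when ${\rm BL}(\mathbf{B},\mathbf{p})<\infty$, some $m\leq T={\rm poly}(b,d,1/\varepsilon)$ satisfies ${\rm BL}(\mathbf{B}_m,\mathbf{p})\leq 1+\varepsilon$.

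There is, however, a genuine gap concerning the \emph{computable} test the algorithm must perform. In the finite case you write ``at this step return $\gamma_m$'', but the algorithm cannot evaluate ${\rm BL}(\mathbf{B}_m,\mathbf{p})$ and hence cannot recognise the relevant $m$; you would need either a monotonicity statement (so that returning $\gamma_T$ suffices) or a directly computable proxy such as the distance of $(\mathbf{B}_m,\mathbf{p})$ from being geometric, and you supply neither. More seriously, your mechanism in the infinite case is wrong: you assert that when ${\rm BL}(\mathbf{B},\mathbf{p})=\infty$ the matrices $B_iB_i^*$ or $\sum_ip_iB_i^*B_i$ must develop a small eigenvalue along the iteration, and that a witness $V$ can be rounded off from the near-kernel. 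But take $n=2$, $B_1=(1,0)$, $B_2=(0,1)$, $p_1=\tfrac32$, $p_2=\tfrac12$: here $V=\R e_2$ violates \eqref{BLfiniteV}, so ${\rm BL}(\mathbf{B},\mathbf{p})=\infty$, yet the alternating normalizations cycle with period two and every Gram-type eigenvalue stays in $\{\tfrac12,\tfrac23,1,\tfrac32,2\}$ forever. Your $1/d$ gap observation is correct and useful, but the near-singular subspace you propose to round simply does not appear. In \cite{GGOW18} the finite/infinite dichotomy is decided instead by a computable distance-to-geometric (equivalently, capacity via the operator-scaling reduction), and the witness $V$ is extracted from that structure rather than from degeneration of the normalization matrices.
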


Further properties of the Brascamp-Lieb datum$(\mathbf{B},\mathbf{p})$ when we fix $\mathbf{p}$ and vary $\mathbf{B}$ have been investigated by 
Bez, Gauvan, Tsuji \cite{BGT}.

\noindent{\bf Acknowledgement.} I am grateful to the two referees for all their helpful remarks improving the survey, and to J\'anos Pach for encouragement.

\end{document}